\numberwithin{equation}{section}
\newtheorem{thm}{Theorem}
\newtheorem{theo}[thm]{Theorem}
\newtheorem{lemma}[thm]{Lemma}
\newtheorem{proposition}[thm]{Proposition}
\newtheorem{rmk}[thm]{Remark}
\newtheorem*{theorem*}{Theorem}
\newtheorem*{definition}{Definition}
\newcommand{\dd}{{d}}
\newcommand{\T}{\mathbb{T}}
\newcommand{\norm}[1]{\left\Vert#1\right\Vert}
\newcommand{\brkt}[1]{\left(#1\right)}
\newcommand{\abs}[1]{\left|#1\right|}
\title[Notes on $H^{\log}$]{Notes on $H^{\log} $: structural properties, dyadic variants, and bilinear $H^1$-$BMO$ mappings}
\author[Bakas]{Odysseas Bakas}
\address{Centre for Mathematical Sciences, Lund University, 221 00 Lund, Sweden}
\email{odysseas.bakas@math.lu.se}
\author[Pott]{Sandra Pott}
\address{Centre for Mathematical Sciences, Lund University, 221 00 Lund, Sweden}
\email{sandra.pott@math.lu.se}
\author[Rodr\'iguez-L\'opez]{Salvador Rodr\'iguez-L\'opez}
\address{Department of Mathematics, Stockholm University, 106 91 Stockholm, Sweden}
\email{s.rodriguez-lopez@math.su.se}
\author[Sola]{Alan Sola}
\address{Department of Mathematics, Stockholm University, 106 91 Stockholm, Sweden}
\email{sola@math.su.se}
\date{\today}
\thanks{The first author was partially supported by the `Wallenberg Mathematics Program 2018', grant no. KAW 2017.0425, financed by the Knut and Alice Wallenberg Foundation.
The second author was partially supported by VR grant 2015-05552.
The third author was partially supported by the Spanish Government grant MTM2016-75196-P}
\subjclass[2010]{42B35 (primary); 42B25, 42C40 (secondary).}
\keywords{Maximal function, Real Hardy spaces, Orlicz spaces.}
\begin{document}

\maketitle

\begin{abstract}This article is devoted to a study of the Hardy space $H^{\log} (\mathbb{R}^d)$ introduced by Bonami, Grellier, and Ky. We present an alternative approach to their result relating the product of a function in the real Hardy space $H^1$ and a function in $BMO$ to distributions that belong to $H^{\log}$ based on dyadic paraproducts.
We also point out analogues of classical results of Hardy-Littlewood, Zygmund, and Stein for $H^{\log}$ and related Musielak-Orlicz spaces.
 \end{abstract}

\section{Introduction}
The Lebesgue spaces $L^p$ with $1\leq p\leq \infty$ are fundamental in mathematical analysis. They are easy to define, they are Banach spaces with many useful properties, and they encode a natural notion of regularity in a measurable function. Nevertheless, there are many instances where $L^p$ spaces, especially with $p=1$, do not capture finer properties of functions or operators acting on functions. In such instances, it may be necessary to consider substitutes for $L^1$, as is the case when studying endpoint bounds for operators on $L^p$ as $p\to 1^+$. 

For instance, the ubiquitous Hardy-Littlewood maximal function exhibits precisely this kind of behaviour near $p=1$. The maximal function is defined for a locally integrable function $f\colon \mathbb{R}^d\to \mathbb{C}$ by setting
$$ M (f) (x) := \sup_{r>0} \frac{1}{|B (x,r)|} \int_{B(x,r)} |f(y)| dy \quad  x \in \mathbb{R}^d,$$
where $B(x,r)$ denotes the open ball in $\mathbb{R}^d$ centered at $x$ with radius $r>0$, and $|A|$ denotes the Lebesgue measure of $A \subseteq \mathbb{R}^d$. It is a basic fact that the mapping $f\mapsto M(f)$ is bounded on $L^p(\mathbb{R}^d)$ for $1<p\leq \infty$. The maximal operator is also bounded from $L^1(\mathbb{R}^d)$ to weak-$L^1$, but does not map $L^1(\mathbb{R}^d)$ to itself (see, for instance, \cite{Big_Stein} for an in-depth discussion).

However, $M(f)$ is locally integrable provided $f$ is compactly supported and satisfies the $L\log L$ condition
$$ \int_{\mathbb{R}^d}|f(x)|\log^+|f(x)|dx<\infty,$$
where, as usual, $\log^+|x|=\max\{\log|x|, 0\}$.
In a 1969 paper, E. M. Stein \cite{Stein_LlogL} proved that this $L\log L$ condition is both sufficient and necessary for integrability of the Hardy-Littlewood maximal function, in the following sense: if $f$ is supported in some finite ball $B=B(r)$ of radius $0<r<\infty$, then 
$$ \int_{B}M(f) dx<\infty \quad \textrm{if, and only if,}\quad \int_{B}|f(x)|\log^+|f(x)|dx<\infty. $$
Thus, $L\log L$ is a natural substitute for $L^1$ for the purposes of studying the boundedness of the Hardy-Littlewood maximal function in the scale of $L^p$ spaces.  

Another classical result that involves the space $L\log L$ is due to Zygmund, and asserts that the periodic Hilbert transform $H$ maps $L\log L(\mathbb{T})$ to\footnote{Notice that the aforementioned endpoint bounds for $M$ and $H$ can be regarded as special cases of the fact that if $T$ is any sublinear operator that is bounded on $L^{p_0}$ for some $p_0 > 1$ and maps $L^1 $ to weak-$L^1$, then $T$ locally maps $L \log L$ to $L^1$.} $L^1(\mathbb{T})$; see e.g. Theorem 2.8 in Chapter VII of \cite{Zygmund_book}. Zygmund's result implies that $L\log L(\mathbb{T})$ is contained in the real Hardy space $H^1(\mathbb{T})$ consisting of integrable functions on the torus whose Hilbert transforms are integrable. Moreover, as shown by Stein in \cite{Stein_LlogL}, Zygmund's theorem has a partial converse, namely if $f\in H^1(\mathbb{T})$ and $f$ is non-negative, then $f$ necessarily belongs to $L\log L(\mathbb{T})$. Therefore, in view of the aforementioned results of Zygmund and Stein, the Hardy  space $H^1(\mathbb{T})$ is, in terms  of  magnitude, associated with the Orlicz space $L\log L(\mathbb{T})$. 

In several problems in harmonic analysis it is natural to consider Hardy-Orlicz spaces, for instance when one studies certain problems related to endpoint mapping properties of operators; see e.g. \cite[Theorem 8]{Zygmund}, Th\'eor\`eme 2 in Chapitre II and Th\'eor\`eme 1 (c) in Chapitre IV of \cite{Meyer} as well as \cite{J}, \cite{LLQR}, \cite{Str}, \cite{V} or, even more generally, Musielak-Orlicz Hardy spaces \cite{YLK}.   
In this paper we shall mainly focus on certain structural aspects of the space $H^{\log}(\mathbb{R}^d)$ appearing in the work of A. Bonami, S. Grellier, and L. D. Ky  \cite{BGK}.

Before we proceed with the outline of our paper, let us give a formal definition of the space $H^{\log}(\mathbb{R}^d)$. 
Let $\Psi \colon \mathbb{R}^d \times [0, \infty) \rightarrow [0, \infty)$ denote the function given by
$$ \Psi (x,t) : = \frac{t}{ \log (e + t) + \log (e + |x|)}, \quad (x,t) \in \mathbb{R}^d \times [0, \infty ).$$ 
If $B$ is a subset of $\mathbb{R}^d$, one defines $L_{\Psi} (B)$ to be the space of all locally integrable functions $f$ on $B$ satisfying 
$$ \int_B \Psi (x, |f(x)|) dx < \infty.$$

We also fix a non-negative function $\phi \in C^{\infty} (\mathbb{R}^d)$, which is supported in the unit ball of $ \mathbb{R}^d$ with $\int_{\mathbb{R}^d} \phi (y) dy = 1$ and $\phi (x) = c_d$ for all $|x| \leq 1/2$, where $c_d$ is a constant depending on the dimension $d$. Given an $\epsilon > 0$, we employ the notation $ \phi_{\epsilon} (x) : = \epsilon^{-d} \phi(\epsilon^{-1} x)$, $x \in \mathbb{R}^d$. 
\begin{definition}[$H^{\log}$, see \cite{BGK, YLK}]
Let $\phi$ be as above. If $f$ is a tempered distribution on $\mathbb{R}^d$, consider the maximal function 
$$ M_{\phi} (f) (x) := \sup_{\epsilon > 0} | (f \ast \phi_{\epsilon}) (x) |, \quad x \in \mathbb{R}^d.$$

The Hardy space $H^{\log} (\mathbb{R}^d)$ is defined to be the space of tempered distributions $f$ on $\mathbb{R}^d $ such that $ M_{\phi} (f) \in L_{\Psi} (\mathbb{R}^d)$, that is, $M_{\phi}(f)$ satisfies
$$ \int_{\mathbb{R}^d} \Psi (x, M_{\phi} (f) (x)) dx < \infty .$$ 
\end{definition}

The reason for defining $H^{\log} (\mathbb{R}^d)$ comes from the study of products of functions in the real Hardy space $H^1(\mathbb{R}^d)$ and its dual space $ BMO (\mathbb{R}^d)$.
To be more specific, following earlier work by Bonami, T. Iwaniec, P. Jones, and M. Zinsmeister in \cite{BIJZ}, it was shown by Bonami, Grellier, and Ky \cite{BGK} that the product $fg$, in the sense of distributions, of a function 
$f\in H^1(\mathbb{R}^d)$ and a function $g $ of bounded mean oscillation in  $ \mathbb{R}^d$ can in fact be represented as a sum of a continuous bilinear mapping into $L^1(\mathbb{R}^d)$ and a
continuous bilinear mapping into $H^{\log}(\mathbb{R}^d)$. Following \cite{BGK}, for a function $g$ of bounded mean oscillation in $ \mathbb{R}^d $, we set
$$ \| g \|_{BMO^+ (\mathbb{R}^d)} : = \sup_{ \substack{ Q \subset \mathbb{R} : \\ Q \text{ cube}}} \frac{1}{|Q|}  \int_Q | g (x) - \langle g \rangle_Q| dx + \Big| \int_{[0,1)^d} g(x) dx \Big| ,$$
where $\langle g \rangle_Q : = |Q|^{-1} \int_Q g(x) dx$. The aforementioned result of Bonami, Grellier, and Ky can be stated as follows.

\begin{theo}[\cite{BGK}]\label{B-G-K_Thm}
There exist two bilinear operators $S$, $T$ and a constant $C_d > 0$ such that
$$ \| S (f,g) \|_{L^1 (\mathbb{R}^d)} \leq C_d \| f \|_{ H^1 (\mathbb{R}^d ) } \| g \|_{ BMO^+ (\mathbb{R}^d)}  $$
and
$$  \| T (f,g) \|_{H^{\log} (\mathbb{R}^d)} \leq C_d \| f \|_{ H^1 (\mathbb{R}^d) }  \| g \|_{ BMO^+ (\mathbb{R}^d)} $$
with 
$$ f \cdot g = S(f,g) + T(f,g) $$
in the sense of distributions.
\end{theo}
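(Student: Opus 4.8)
\emph{Approach.} The plan is to realise $S$ and $T$ as combinations of dyadic paraproducts and to deduce their mapping properties from paraproduct boundedness theorems. Fix a dyadic grid and expand $f$ and $g$ in the $L^2$-normalised Haar system, writing $f=\sum_I \langle f,h_I\rangle h_I$ and $g=\sum_J \langle g,h_J\rangle h_J$. Splitting the formal product according to whether a Haar cube is strictly smaller than, strictly larger than, or equal to another gives the dyadic Bony decomposition
$$ fg = \Pi_1(f,g)+\Pi_2(f,g)+\Delta(f,g), $$
where $\Pi_1(f,g)=\sum_I \langle f\rangle_I\,\langle g,h_I\rangle\,h_I$ carries the Haar coefficients of $g$, $\Pi_2(f,g)=\sum_I \langle f,h_I\rangle\,\langle g\rangle_I\,h_I$ carries the Haar coefficients of $f$, and $\Delta(f,g)=\sum_I \langle f,h_I\rangle\langle g,h_I\rangle\,|I|^{-1}\mathbf 1_I$ is the diagonal term (the finitely many cross terms among the Haar generators of a single cube being treated identically). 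I would then set $S(f,g):=\Pi_1(f,g)+\Delta(f,g)$ and $T(f,g):=\Pi_2(f,g)$.

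\emph{The $L^1$ bounds.} Since $g\in BMO$, the sequence $\{\langle g,h_I\rangle\}$ is a Carleson sequence, so $\Pi_1(\cdot,g)$ is the classical $BMO$-symbol paraproduct; by the square-function/Carleson-embedding estimate it maps $H^1$ into $H^1\hookrightarrow L^1$ with norm $\lesssim\|g\|_{BMO^+}$. For the diagonal, replacing each coefficient by its modulus alters neither the square function of $f$ nor the Carleson constant of $g$, hence neither norm, so $H^1$--$BMO$ duality yields
$$ \|\Delta(f,g)\|_{L^1}\le \sum_I |\langle f,h_I\rangle|\,|\langle g,h_I\rangle|\lesssim \|f\|_{H^1}\,\|g\|_{BMO^+}. $$
Together these give the continuity of $S$ into $L^1$.

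\emph{The $H^{\log}$ bound (main obstacle).} The crux is the $H^{\log}$ estimate for $T(f,g)=\Pi_2(f,g)$, whose Haar coefficients $\langle f,h_I\rangle\langle g\rangle_I$ carry the averages of the $BMO$ function $g$. These are unbounded but grow only logarithmically, and the whole point is that this logarithmic growth is exactly what the Musielak--Orlicz denominator $\log(e+t)+\log(e+|x|)$ of $\Psi$ can absorb. Controlling $M_\phi(T(f,g))$ by the dyadic square function $\big(\sum_I |\langle f,h_I\rangle|^2|\langle g\rangle_I|^2|I|^{-1}\mathbf 1_I\big)^{1/2}$ --- a reduction furnished by the dyadic characterisations of $H^{\log}$ developed in the structural part of these notes --- one must quantify two separate logarithmic effects: the spatial factor $\log(e+|x|)$ reflects the at-most-logarithmic growth of the averages of $g$ away from the unit cube, which is controlled by the anchoring term in $\|g\|_{BMO^+}$; the factor $\log(e+t)$, with $t=M_\phi(T(f,g))(x)$, reflects the dependence of $\langle g\rangle_I$ on the scale of $I$ and is governed by the John--Nirenberg inequality. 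Making this interplay precise, so as to obtain
$$ \int_{\mathbb R^d}\Psi\big(x, M_\phi(T(f,g))(x)\big)\,dx\lesssim \|f\|_{H^1}\,\|g\|_{BMO^+}, $$
is the principal technical difficulty.

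\emph{From the dyadic model to the distributional product.} Finally, since the statement concerns the genuine distributional product and the non-dyadic spaces $H^1$, $BMO$, and $H^{\log}$, I would sum the above construction over a finite family of adjacent (shifted) dyadic grids, for which the averaged product reproduces $f\cdot g$ and the continuous norms are comparable to the dyadic ones. The identity $f\cdot g=S(f,g)+T(f,g)$ in the sense of distributions would then be checked on a dense class of smooth, compactly supported $f$ and $g$ and extended using the continuity of $S$ and $T$ just established.
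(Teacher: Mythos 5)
Your decomposition and your assignment of the pieces are the right ones, and they coincide with what the paper actually does in its dyadic model (Section \ref{d_BGK}): the paraproduct carrying the Haar coefficients of $g$ together with the diagonal term go to $L^1$, while the paraproduct $\sum_I \langle f,h_I\rangle\langle g\rangle_I h_I$ carrying the averages of $g$ is the one sent to $H^{\log}$; your $L^1$ estimates for the first two pieces are correct and standard. But the proposal has a genuine gap exactly where the theorem lives: the $H^{\log}$ bound for $T(f,g)=\sum_I \langle f,h_I\rangle\langle g\rangle_I h_I$ is never proved. You correctly identify it as ``the main obstacle,'' describe heuristically which logarithm should absorb which effect, and stop. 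Since the other two mapping properties are classical paraproduct/duality facts, this deferred estimate \emph{is} the content of the theorem, so what you have written is a plan, not a proof.

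For comparison, here is how the paper closes that gap (Proposition \ref{P1}, in the periodic dyadic setting). One reduces to an $H^1_{\mathcal{I}}$-atom $a$ supported on $\Omega\in\mathcal{I}$ and uses the algebraic identity $\Pi_1(a,b)=\Pi_1(a,P_\Omega b)+\langle b\rangle_\Omega\, a$, where $P_\Omega b=\sum_{I\subseteq\Omega}\langle b,h_I\rangle h_I$. The first term requires no logarithms at all: $S_{\mathcal{I}}[\Pi_1(a,P_\Omega b)]\le M(P_\Omega b)\,S_{\mathcal{I}}[a]$, so it lands in $H^1_{\mathcal{I}}$ with norm $\lesssim\|b\|_{BMO^+_{\mathcal{I}}}$. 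All the unboundedness is thus concentrated in one scalar $\langle b\rangle_{\Omega}$ per atom, and for $f=\sum_k\lambda_k a_k$ the resulting term $\beta_2=\sum_k\lambda_k\langle b\rangle_{\Omega_k}a_k$ is handled by the pointwise triangle inequality $|\langle b\rangle_{\Omega_k}|\le |b(\theta)-\langle b\rangle_{\Omega_k}|+|b(\theta)|$: the first part produces $\sum_k|\lambda_k|\,|P_{\Omega_k}b|\,S_{\mathcal{I}}[a_k]$, which is bounded in $L^1$, and the second produces $|b|\cdot\sum_k|\lambda_k|S_{\mathcal{I}}[a_k]$, a product of a $BMO$ function with an $L^1$ function, which lies in $L^{\log}$ by the John--Nirenberg estimate in the form $\|u\,b\|_{L^{\log}(\mathbb{T})}\lesssim\|u\|_{L^1(\mathbb{T})}\|b\|_{BMO^+_{\mathcal{I}}(\mathbb{T})}$ (Lemma \ref{J-N} and Proposition \ref{rough_bound}). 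So John--Nirenberg does enter, but via a product estimate after splitting off the averages, not via a direct analysis of $M_\phi$ or of a logarithmically weighted square function, which is what your sketch would require. Two further cautions: the passage from dyadic to non-dyadic is not done by ``averaging the product over shifted grids'' but by decomposing $f=f_1+f_2$ with $f_i\in H^1_{\mathcal{I}^{\delta_i}}$ (Mei's theorem), observing that $b$ lies in both dyadic $BMO$ spaces, and invoking $H^{\log}_{\mathcal{I}^0}(\mathbb{T})+H^{\log}_{\mathcal{I}^{1/3}}(\mathbb{T})\cong H^{\log}(\mathbb{T})$ (Theorem \ref{two_translates}), whose proof needs the atomic theory of Sections \ref{intro_d}--\ref{d_atom}; and the paper carries all of this out only on $\mathbb{T}$, where the spatial weight $\log(e+|x|)$ is absent --- the Euclidean statement of Theorem \ref{B-G-K_Thm} is quoted from \cite{BGK}, so your plan in $\mathbb{R}^d$ would additionally need Euclidean analogues of these dyadic structural results for the full Musielak--Orlicz weight.
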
 

The operators $S$ and $T$ in the statement of Theorem \ref{B-G-K_Thm} are not unique and they are given in \cite{BGK} in terms of paraproducts that are constructed by using continuous wavelets. See also \cite{Ky},\cite{B_etal}, and \cite{YLK} for further developments. For an introduction to the theory of wavelets, we refer the reader to Y. Meyer's book \cite{Meyer_book}. 

Having seen why $H^{\log} (\mathbb{R}^d)$ is worthy of study, we wish to further elucidate its structure. Our paper consists of three parts.

\subsection*{Part I: Sections \ref{steinproof} and \ref{zygmundproof}} In the first part of this paper we present analogues of the aforementioned theorems of Zygmund and Stein  for $H^{\log} (\mathbb{R}^d)$.
Such results can be derived from more general results previously obtained  in the setting of Orlicz spaces, see for instance \cite{BM,IV}. (We are grateful that these facts were pointed out to us in connection with an earlier note on this subject.) We give a self-contained account here, including a discussion of sharpness, and indicate some minor modifications that need to be made to obtain results in the Musielak-Orlicz setting.

For an $H^{\log}$ version of Stein's theorem, we need to identify the correct analogue of $L\log L$ in this context, which turns out to be $L\log \log L$:  given a measurable subset $B$ of $\mathbb{R}^d$, $L \log \log L (B)$ denotes the class of all locally integrable functions $f$ with $\mathrm{supp}(f) \subseteq B$ and
$$ \int_B |f(x)| \log^+ \log^+ |f(x)| dx < \infty.$$ 

Here is our version of Stein's lemma for $L_{\Psi}$.  
\begin{thm}\label{Stein-type_lemma}
Let $f $ be a measurable function supported in a closed ball $B \subsetneq \mathbb{R}^d$.

Then $M(f) \in L_{\Psi} (B)$ if, and only if, $f \in L \log \log L (B)$.
\end{thm}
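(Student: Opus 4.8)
The first thing I would do is observe that on a \emph{bounded} ball $B$ the Musielak--Orlicz structure collapses to an ordinary Orlicz one: since $1\le\log(e+|x|)\le\log(e+R)$ whenever $B\subseteq B(0,R)$, we have $\Psi(x,t)\approx\Phi(t):=t/\log(e+t)$ uniformly for $x\in B$, with implied constants depending only on $B$. Hence the claim is equivalent to
$$\int_B\Phi(M(f))\,dx<\infty\quad\Longleftrightarrow\quad\int_B|f|\log^+\log^+|f|\,dx<\infty,$$
which is the exact analogue of Stein's theorem with the Orlicz function $t\mapsto t$ replaced by $t\mapsto t/\log(e+t)$ and $L\log L$ replaced by $L\log\log L$. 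Both sides are conveniently expressed through distribution functions, so the engine of the proof will be the layer-cake formula together with two-sided control of the level sets of $M(f)$ by those of $f$.

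For the direction $f\in L\log\log L(B)\Rightarrow M(f)\in L_\Psi(B)$, I would write $\int_B\Phi(M(f))=\int_0^\infty\Phi'(\lambda)\,|\{x\in B:M(f)(x)>\lambda\}|\,d\lambda$ and use $\Phi'(\lambda)\le(\log(e+\lambda))^{-1}$ together with the refined weak-type bound $|\{M(f)>\lambda\}|\le C\lambda^{-1}\int_{\{|f|>\lambda/2\}}|f|$ (obtained by splitting $f$ at height $\lambda/2$ and invoking the weak $(1,1)$ inequality), while controlling the range $\lambda\le1$ trivially by $|B|$. After interchanging the order of integration by Fubini, the crucial computation is the elementary estimate $\int_1^{2|f(x)|}\frac{d\lambda}{\lambda\log(e+\lambda)}\lesssim 1+\log^+\log^+|f(x)|$; it is precisely the primitive $\log\log$ of $1/(\lambda\log\lambda)$ that turns $L^1$ control of the level sets into the $L\log\log L$ condition. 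This direction works verbatim with $B$ replaced by any bounded set, so in particular it will also give finiteness of $\int_Q\Phi(M(f))$ for a cube $Q\supseteq B$.

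For the converse $M(f)\in L_\Psi(B)\Rightarrow f\in L\log\log L(B)$, I would run the same computation in reverse, now using the lower bound $\Phi'(\lambda)\ge c(\log(e+\lambda))^{-1}$ for $\lambda$ past a fixed threshold, together with a \emph{reverse} weak-type inequality $|\{M^{\mathrm d}_Qf>\lambda\}|\ge c_d\lambda^{-1}\int_{\{|f|>\lambda\}}|f|$. The latter is the transparent output of the Calderón--Zygmund decomposition: at height $\lambda$ the maximal dyadic cubes $Q_j$ satisfy $\lambda<\langle|f|\rangle_{Q_j}\le2^d\lambda$ and cover $\{|f|>\lambda\}$, so $\sum_j|Q_j|\ge(2^d\lambda)^{-1}\int_{\{|f|>\lambda\}}|f|$. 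The same Fubini interchange then yields $\int_Q|f|\log^+\log^+|f|\lesssim\int_Q\Phi(M^{\mathrm d}_Qf)+C$, and the comparison $M^{\mathrm d}_Qf\le C_dM(f)$ on $Q$, combined with the homogeneity-type bound $\Phi(ct)\le c\,\Phi(t)$ for $c\ge1$, converts this into a bound by $\int_Q\Phi(M(f))$.

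The step I expect to be the genuine obstacle is \emph{locality of the domain}. The reverse inequality is clean only when the dyadic grid contains the support of the function fed into the maximal operator, i.e.\ on a cube $Q$ with $f\mathbf{1}_Q$ supported in $Q$; but the hypothesis only gives finiteness of $\int_B\Phi(M(f))$ over the ball, and one cannot enlarge the domain for free — indeed $\int_{\mathbb R^d}\Psi(x,M(f))$ typically \emph{diverges}, since $M(f)(x)\approx\|f\|_1|x|^{-d}$ forces a tail $\int\frac{dx}{|x|^d\log(e+|x|)}=\infty$, which is exactly why the theorem is stated on a bounded ball. I would resolve this by playing the two inclusions against each other: covering $B$ by boundedly many cubes $Q\subseteq 2B$ and running the Calderón--Zygmund argument with $f\mathbf{1}_Q$ on each controls $\int_B|f|\log^+\log^+|f|$ by $\int_{2B}\Phi(M(f))+C$, after which it remains to absorb the annulus, i.e.\ to show $\int_{2B\setminus B}\Phi(M(f))\lesssim\int_B\Phi(M(f))$. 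This I would obtain from a pointwise comparison $M(f)(x)\le C_d\,M(f)(x^\ast)$ across the sphere, where $x^\ast=x_0+r^2|x-x_0|^{-2}(x-x_0)$ is the inversion of $x\in 2B\setminus B$ into $B$, which is bi-Lipschitz on the annulus. Handling this boundary layer carefully, rather than the Fubini computations, is where the real work lies; the $\log\log$ bookkeeping itself is routine once the reverse weak-type estimate has been localized correctly.
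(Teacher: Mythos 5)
Your proposal is correct. The forward implication coincides with the paper's proof: the local equivalence $\Psi(x,t)\approx\Psi_0(t)=t/\log(e+t)$ on bounded sets, the layer-cake formula, the refined weak-type bound $|\{M(f)>\alpha\}|\le C_d\alpha^{-1}\int_{\{|f|>\alpha/2\}}|f|$, and the Fubini interchange producing $\log^+\log^+$. The converse follows the same overall architecture as the paper --- a reverse weak-type estimate fed into the same Fubini computation, with the annulus $2B\setminus B$ absorbed via the inversion $x\mapsto x_0+r_0^2(x-x_0)/|x-x_0|^2$ --- but you obtain the key reverse weak-type inequality by a genuinely different route. The paper imports it as a black box from Stein's $L\log L$ note (his inequality (6)), an estimate for the full Hardy--Littlewood maximal operator, and must therefore first pass to a large ball $\rho B$: it proves $M(f)\in L_\Psi(\rho B)$ by combining the inversion estimate on $2B\setminus B$ with the decay bound $M(f)(x)\lesssim (r-1)^{-d}|B|^{-1}\|f\|_{L^1(B)}$ off $rB$, the latter also ensuring that the relevant level sets $\{M(f)>c_1\alpha\}$, $\alpha\ge e^e$, lie inside $\rho B$. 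You instead derive the reverse estimate from the Calder\'on--Zygmund stopping time on dyadic cubes $Q\subseteq 2B$ covering $B$: the maximal cubes at height $\lambda$ satisfy $\lambda<\langle|f|\rangle_{Q_j}\le 2^d\lambda$ and cover $\{|f|>\lambda\}$ a.e., whence $|\{M^{\mathrm d}_Q f>\lambda\}|\ge(2^d\lambda)^{-1}\int_{\{|f|>\lambda\}}|f|$ for $\lambda\ge\langle|f|\rangle_Q$. Since dyadic level sets stay inside $Q$ by construction, the localization comes for free: no large ball $\rho B$, no off-support decay estimate, and the only boundary issue left is the annulus, which you handle exactly as the paper does. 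What your route buys is self-containedness --- the paper's one external input becomes a short stopping-time argument --- and lighter domain bookkeeping, at the cost of the extra (routine) ingredients $M^{\mathrm d}_Q(f\mathbf{1}_Q)\le C_d M(f)$ on $Q$ and the doubling bound $\Psi_0(ct)\le c\,\Psi_0(t)$ for $c\ge1$; what the paper's route buys is that, once Stein's inequality is quoted, the converse is a near-verbatim transcription of his original argument, with all new work concentrated in verifying $M(f)\in L_\Psi(\rho B)$.
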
 
Our proof in fact leads to a more general version of Theorem \ref{Stein-type_lemma}. We discuss this, and give a proof of Theorem \ref{Stein-type_lemma} in Section \ref{steinproof}.

Next is the analogue of Zygmund's result for $H^{\log}(\mathbb{R}^d)$.
\begin{theo}\label{LlogL} Let $B$ denote the closed unit ball in $\mathbb{R}^d$.

If $f$ is a measurable function satisfying $f \in L \log \log L (B)$ and $\int_{B} f (y) dy = 0$, then $f \in H^{\log}(\mathbb{R}^d)$.
\end{theo}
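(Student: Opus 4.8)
The plan is to verify directly that $\int_{\mathbb{R}^d}\Psi(x, M_\phi(f)(x))\,dx<\infty$, splitting the integration into the ball $2B$ around the support of $f$ and its complement, and handling these two regions by genuinely different mechanisms. At the outset I would record the two elementary facts about $\Psi$ that I intend to use repeatedly: since its denominator is always at least $2$, one has the crude bound $\Psi(x,t)\le t$; and since $t\mapsto \Psi(x,t)/t$ is non-increasing, $\Psi$ is increasing in its second variable and satisfies $\Psi(x, Ct)\le C\,\Psi(x,t)$ for every $C\ge 1$. I would also note immediately that $f\in L\log\log L(B)\subseteq L^1(B)$ is compactly supported, hence a tempered distribution, so that $M_\phi(f)$ is well defined.

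For the near region I would exploit the pointwise domination $M_\phi(f)(x)\le C_\phi M(f)(x)$, which holds because $\phi$ is bounded and supported in the unit ball, giving $|f\ast\phi_\epsilon(x)|\le \|\phi\|_\infty\,\epsilon^{-d}\int_{B(x,\epsilon)}|f|\le C_\phi M(f)(x)$. Applying the Stein-type lemma (Theorem~\ref{Stein-type_lemma}) with the ball $2B$, which still contains $\mathrm{supp}(f)$, yields $M(f)\in L_\Psi(2B)$. Combining this with the monotonicity of $\Psi$ and the scaling $\Psi(x, C_\phi t)\le C_\phi\Psi(x,t)$ gives $\int_{2B}\Psi(x,M_\phi(f))\,dx\le C_\phi\int_{2B}\Psi(x,M(f))\,dx<\infty$.

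For the far region $|x|>2$ the decay comes entirely from the cancellation hypothesis $\int_B f=0$. For fixed $x$ the convolution $f\ast\phi_\epsilon(x)$ vanishes unless $\epsilon\ge |x|-1$, and for such $\epsilon$ I would subtract the constant $\phi_\epsilon(x)$ and invoke the mean-value inequality,
$$|f\ast\phi_\epsilon(x)| = \Big|\int_B f(y)\,[\phi_\epsilon(x-y)-\phi_\epsilon(x)]\,dy\Big|\le \|\nabla\phi\|_\infty\,\epsilon^{-d-1}\|f\|_{L^1(B)},$$
where I used $|y|\le 1$ on $B$. Taking the supremum over the admissible $\epsilon$ (the bound being largest at $\epsilon=|x|-1$) and using $|x|-1\ge |x|/2$ produces $M_\phi(f)(x)\lesssim |x|^{-d-1}$. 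Since $\Psi(x,t)\le t$, it follows that $\int_{|x|>2}\Psi(x,M_\phi(f))\,dx\lesssim \int_{|x|>2}|x|^{-d-1}\,dx<\infty$. Adding the two contributions gives $M_\phi(f)\in L_\Psi(\mathbb{R}^d)$, i.e. $f\in H^{\log}(\mathbb{R}^d)$.

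The point that really dictates the structure of the argument is that the logarithmic weight in $\Psi$ is too weak to help at infinity: without cancellation one only has $M_\phi(f)(x)\lesssim |x|^{-d}$, which gives $\Psi(x,M_\phi(f))\approx |x|^{-d}/\log|x|$, whose integral over $|x|>2$ diverges like $\log\log$. Thus the mean-zero hypothesis is not a technical convenience but is genuinely needed to gain the extra power $|x|^{-d-1}$; once that is secured the weight plays no role in the far region, and only the Stein-type lemma is needed near the support. I expect the verification of these two estimates to be routine, with the only care required being the interplay between the admissible range of $\epsilon$ and the scale $|x|$ in the far-field bound.
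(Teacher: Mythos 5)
Your proof is correct and follows essentially the same route as the paper's: a local estimate on $2B$ via the pointwise bound $M_{\phi}(f)\lesssim M(f)$ combined with Theorem \ref{Stein-type_lemma}, and a far-field bound $M_{\phi}(f)(x)\lesssim |x|^{-d-1}$ obtained from the mean-zero hypothesis together with a Lipschitz estimate on $\phi_{\epsilon}$, restricting to the admissible scales $\epsilon\gtrsim|x|$. The only differences are cosmetic: you use the crude bound $\Psi(x,t)\le t$ at infinity where the paper retains the factor $[\log(e+|x|)]^{-1}$, and you verify $M_{\phi}(f)\lesssim M(f)$ and the dilation property $\Psi(x,Ct)\le C\,\Psi(x,t)$ directly rather than citing Stein's book and Lemma \ref{decreasing}.
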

We remark that the mean-zero condition in the hypothesis is in fact necessary in order to place a compactly supported function in $H^{\log} (\mathbb{R}^d) $; see Lemma \ref{1.4_distr} below for a more general version of this fact valid for compactly supported distributions.

\subsection*{Part II: Sections \ref{intro_d}, \ref{d_atom} and \ref{d_BGK}} The second part of this paper covers two different themes. In the first one, we show that one can simplify the argument in \cite{BGK} that establishes Theorem \ref{B-G-K_Thm} by reducing matters to appropriate dyadic counterparts. To be more specific, in Section \ref{intro_d} we introduce a dyadic version of $H^{\log} $ in the periodic setting and then, by establishing a characterisation of dyadic $H^{\log}$ in terms of atomic decompositions, we show that $H^{\log} $ coincides with an intersection of two translates of dyadic $H^{\log}  $, a result of independent interest; see Section \ref{d_atom}. In Section \ref{d_BGK}, we show that, in view of the aforementioned result of Section \ref{d_atom}, one can obtain a simplified proof of Theorem \ref{B-G-K_Thm} in the periodic setting in which only dyadic paraproducts are involved.

\subsection*{Part III: Section \ref{extensions}} The last part of this paper also features two different themes: in Section \ref{periodic_2-3}, we discuss some variants and further extensions of Theorems \ref{Stein-type_lemma} and \ref{LlogL} to the periodic setting. In Section \ref{FC_H^{log}}, we establish a version of a classical inequality of G. H. Hardy and J. E. Littlewood \cite{H-L} that gives a description of the order of magnitude of Fourier coefficients of distributions in $H^{\log} (\mathbb{T})$.



\section{Proof of the Stein-type Theorem for $L_{\Psi}$ and further extensions}\label{steinproof}

We begin with an elementary observation that will be implicitly used several times in the sequel: if $\Phi : [0, \infty) \rightarrow [0, \infty)$ is an increasing function, then for every positive constant $\alpha_0$ one has 
$$ \int_B \Phi (|g (x)| ) dx \leq \Phi (\alpha_0) |B| + \int_{\{ |g| > \alpha_0 \}} \Phi ( |g(x)| ) dx $$ 
for each measurable set $B$ in $\mathbb{R}^d$ with finite measure.

We now turn to the proof of our first theorem. 
\begin{proof}[Proof of Theorem \ref{Stein-type_lemma}]
Assume first that $f \in L \log \log L (B)$. The main observation is that locally the space $L_{\Psi}$ essentially coincides with the Orlicz space defined in terms of the function $\Psi_0 (t) : = t \cdot [\log (e + t)]^{-1}$, $t \geq 0$ and so, one can employ the arguments of Stein \cite{Stein_LlogL}. In view of this observation, we remark that the fact that $f \in L \log \log L (B)$ implies $M (f ) \in L_{\Psi_0} (B)$ is well-known; see for instance \cite[p.242]{BM}, \cite[Sections 4 and 7]{IV}. We shall also include the proof of this implication here for the convenience of the reader.

To be more precise, we note that for $x\in B$ one has
\begin{equation}\label{eq:equivalence}
    \log(e+M(f)(x))\leq \log \brkt{(e+|x|)(e+M(f)(x))}  
\leq c\log(e+M(f)(x)),
\end{equation} 
for a constant $c$ that only depends on $B$.  
Next, an integration by parts yields
$$ \int_{e}^{y} \frac{1}{\log \alpha}d\alpha=\frac{y}{\log y}-e+\int_{e}^{y}\frac{1}{\log^2 \alpha}d\alpha,$$
so that
$$ \frac{y}{\log y}\leq e +\int_{e}^{y}\frac{1}{\log \alpha}d\alpha, \quad \textrm{for} \quad y>e.$$
Together, these two observations imply that
\begin{align*}
 \int_B \Psi (x, M(f)(x) ) dx &\lesssim_B 1 + \int_{ B \cap \{ M(f) > e \} } \Bigg( \int_e^{M(f)(x)} \frac{1}{ \log \alpha } d \alpha \Bigg) dx \\
 &=1 +  \int_e^{\infty} \frac{1}{ \log \alpha } \cdot | \{ x \in B : M (f)(x) > \alpha \} | d \alpha.
\end{align*}
To estimate the last integral, note that there exists an absolute constant $C_d >0$ such that
\begin{equation}\label{wt_sv}
|\{ x \in \mathbb{R}^d : M (f)(x) > \alpha \}| \leq \frac{C_d}{\alpha} \int_{\{ |f| > \alpha/2 \}} |f(x)| dx  
\end{equation}
 for all $\alpha > 0$; see e.g. \cite[(5)]{Stein_LlogL} or Section 5.2 (a) in Chapter I in \cite{Singular}. We thus deduce from \eqref{wt_sv} that
\begin{align*}
\int_B \Psi (x, M(f)(x)) dx &\lesssim_B 1 + \int_B |f(x)|  \Bigg( \int_e^{2|f(x)|} \frac{1}{\alpha \log \alpha} d \alpha \Bigg) dx \\
&\lesssim 1 + \int_{B} |f(x)| \log^+ \log^+ |f(x)| dx,  
 \end{align*}
which implies that $ M (f) \in L_{\Psi} (B)$. 

To prove the reverse implication, assume that for some $f $ supported in $B$ with $f \in L^1 (B)$ we have $M(f) \in L_{\Psi} (B)$. Our task is to show that $f \in L \log \log L (B)$. In order to accomplish this, we shall make use of the fact that there exists a $\rho > 2$, depending only on $\| f \|_{L^1 (B)}$ and $B$, such that we also have $M (f) \in L_{\Psi} (\rho B)$ and moreover, for every $\alpha \geq e^e$,
\begin{equation}\label{wt_reverse}
| \{ x \in \rho B : M (f) (x) > c_1  \alpha \} | \geq \frac{c_2}{\alpha} \int_{B \cap \{ |f| > \alpha \}} |f(x)| dx, 
\end{equation}
where $c_1$, $c_2$ are positive constants that can be taken to be independent of $f$ and $\alpha$. Indeed, arguing as in the proof of \cite[Lemma 1]{Stein_LlogL}, note that for every $r > 2$ one has
\begin{equation}\label{away}
 M (f) (x) \lesssim \frac{1}{(r - 1)^d |B|}  \| f \|_{L^1 (B)}  \quad \mathrm{for} \ \mathrm{all} \ x \in \mathbb{R}^d \setminus r B.
\end{equation}
Hence, if we choose $\rho >2 $ to be large enough, then $ M (f) (x) < e^e \leq \alpha$ for all $x \in \mathbb{R}^d \setminus \rho B$ and so, \eqref{wt_reverse}  follows from \cite[Inequality (6)]{Stein_LlogL}.

Furthermore, one can check that $M(f) \in L_{\Psi} (\rho B)$. Indeed, if we write $B = B (x_0, r_0)$ then, as in \cite{Stein_LlogL}, it follows from the definition of $M$ and the fact that $\mathrm{supp}(f) \subseteq B$ that there exists a constant $c_0>0$, depending only on the dimension, such that for every $ x \in 2B \setminus B$ one has 
\begin{equation}\label{pw_ineq_M}
M (f) (x ) \leq c_0  M (f) \left( x_0 + r_0^2 \cdot \frac{ x-x_0 } { | x - x_0 |^2} \right)
\end{equation}
and so, $M (f) \in L_{\Psi} (2 B)$. To show that \eqref{pw_ineq_M} implies that $M(f) \in L_{\Psi} (B)$, observe first that the function $\Psi_0 $ is increasing on $[0,+\infty)$, and for all $t\geq 1$ and all $s>0$, 
$$
    1\geq \frac{\log(e+s)}{\log(e+ts)}=\frac{\log(e+s)}{\log(e/t+s)+\log t}\geq 
\frac{\log(e+s)}{\log(e+s)+\log t}\geq 
\frac{1}{1+\log t},
$$
so $\Psi_0$ satisfies
\begin{equation}\label{eq:doubling}
        t(1+\log t)^{-1} \Psi_0 (s)\leq \Psi_0 (st)\leq t\Psi_0 (s),
\end{equation} 
which implies that for all $c>0$ and all $s>0$ 
$$ \Psi_0 (cs)\sim_c \Psi_0 (s). $$

Observe that 
a change to polar coordinates, followed by another a change of variables and elementary estimates yield
\begin{align*}
        \int_{2B\setminus B} \Psi_0 (Mf(x)) \dd x 
&\lesssim \int_{r_0}^{2r_0} s^{d-1}\int_{S^{d-1}} \Psi_0 (Mf(x_0+r_0^2 \theta/s))\dd \sigma(\theta)\dd s\\
&\sim r_0^{-1}\int_{\frac{1}{2}}^1  t^{-1-d}\int_{S^{d-1}} \Psi_0 (Mf(x_0+r_0t \theta))\dd \sigma(\theta)\dd t\\
&\sim \int_{\frac{1}{2}}^1  t^{d-1}\int_{S^{d-1}} \Psi_0 (Mf(x_0+r_0t \theta))\dd \sigma(\theta)\dd t \\
& \lesssim \int_B \Psi(x, Mf(x))\dd x.
\end{align*}

Moreover, we deduce from \eqref{away} that $M(f)$ belongs to $ L_{\Psi} (\rho B \setminus 2B)$ and it thus follows that $M (f) \in L_{\Psi} (\rho B)$, as desired. 

Next, note that by the same reasoning as in the proof of sufficiency and by Fubini's theorem,
\begin{align*}
\int_{\rho B} \Psi (x, M (f) (x)) dx &\gtrsim \int_{\rho B \cap  \{ M(f) > \max \{e^e, |x_0|+ r_0 \}  \} } \frac{M(f)(x)}{\log (M (f)(x))} dx \\
& \gtrsim \int_{\rho B \cap\{ M(f) > \max \{e^e, |x_0|+ r_0 \} \} } \Bigg( \int_{e^e}^{M(f)(x)} \frac{1}{ \log \alpha} d \alpha \Bigg) dx \\
& \gtrsim \int_{\max \{e^e, |x_0|+ r_0 \}}^{\infty} \frac{1}{\log \alpha}  | \{x \in \rho B : M(f)(x) > c_2  \alpha \} | d \alpha  .
\end{align*}
By using \eqref{wt_reverse}, we now get
\begin{align*}
\infty  > \int_{\rho B} \Psi (x, M(f)(x)) dx & \gtrsim \int_B |f(x)| \Bigg( \int_{\max \{e^e, |x_0|+ r_0 \}}^{|f(x)|} \frac{1}{\alpha \log \alpha} d \alpha \Bigg) dx \\
&\gtrsim 1 + \int_B |f(x)| \log^+ \log^+ |f(x)| dx 
\end{align*}
and this completes the proof of Theorem \ref{Stein-type_lemma}.
\end{proof}

\begin{rmk} Let $B_0$ denote the closed unit ball in $\mathbb{R}^d$. Given a small $\delta \in (0,e^{-e})$, if, as on pp. 58--59 in \cite{Fefferman}, one considers $f := \delta^{-d} \chi_{\{ |x| < \delta \} }$ then $M (f) (x) \sim |x|^{-d}$ for all $|x| > 2 \delta$ and so,
\begin{equation}\label{equivalence_ex} 
\int_{B_0} |f(x)| \log^+ \log^+ |f (x)| dx \sim \log (\log (\delta^{-1}) ) \sim \int_{B_0} \Psi (x, M(f) (x)) dx .
\end{equation}
This shows that given $L_{\Psi} (B_0)$, the space $L \log \log L (B_0)$ in the statement of Theorem \ref{Stein-type_lemma} is best possible in general, in terms of size. 

Indeed, the left-hand side of \eqref{equivalence_ex} follows by direct calculation. On the other hand,  \eqref{eq:equivalence}, \eqref{eq:doubling}, a change to polar coordinates, and further change of variables yield
 \begin{align*}
\int_{B_0} \Psi (x, M(f) (x)) dx &\sim 1+\int_{2\delta}^{1} \frac{1}{\log(e+s^{-d})}\frac{\dd s}{s}\\
&\sim 1+\int_{1}^{(2\delta)^{-1}} \frac{1}{\log(e+u^d)}\frac{\dd u}{u}\sim 1+\int_{e}^{(2\delta)^{-1}} \frac{1}{\log(u)}\frac{\dd u}{u},
 \end{align*}
from where the right-hand side of \eqref{equivalence_ex} follows.
\end{rmk}

\subsection{Further generalisations} Assume that $\Psi : \mathbb{R}^d \times [0, \infty)$ is a non-negative function satisfying the following properties:
\begin{enumerate}
\item For every $x \in \mathbb{R}^d$ fixed, $\Psi (x,t) = \Psi_x (t)$ is \emph{Orlicz} in $t \in [0,\infty)$, namely $\Psi_x (0) = 0$, $\Psi_x$ is increasing on $[0,\infty)$ with $\Psi_x (t) > 0$ for all $t >0$ and $\Psi_x (t) \rightarrow \infty$ as $t \rightarrow \infty$. 

Moreover, assume that there exists an absolute constant $C_0 > 0$ such that $\Psi_x (2t) \leq C_0 \Psi_x (t) $ for all $x \in \mathbb{R}^d$ and every $t \in [0, \infty) $. 
\item If $K$ is a compact set in $\mathbb{R}^d$, then there exist $x_1, x_2 \in K$ and a constant $C_K > 0$ such that
$$ C_K^{-1} < \Psi (x_1, t) \leq \Psi (x,t) \leq \Psi (x_2,t)  < C_K $$
for every $x \in K$ and for all $t >0$. 
\item If we write $\Psi (x,t) = \Psi_x (t) = \int_0^t \psi_x (s) ds$, then for every $\alpha_0$, $\beta_0$ with $0 < \alpha_0 < \beta_0 $ one has
$$ \int_{\alpha_0}^{\beta_0} \frac{\psi_x (s)}{s} ds < \infty$$
for every $x \in \mathbb{R}^d$. 
\end{enumerate}

By carefully examining the proof of Theorem \ref{Stein-type_lemma}, one obtains the following result.

\begin{theo} \label{generalstein}

Let $\Psi (x,t) = \int_0^t \psi_x (s) ds$, $(x,t) \in \mathbb{R}^d \times [0, \infty)$, be as above.

Fix a closed ball $B$ with $B \subsetneq \mathbb{R}^d$ and let $f$ be such that $\mathrm{supp} (f) \subseteq B$.  Then, $M (f ) \in L_{\Psi} (B)$ if, and only if, 
$$ \int_{\{ | f| > \alpha_0 \} } | f(x) |  \Bigg( \int_{\alpha_0}^{|f(x)|} \frac{\psi_x (s)}{s} ds \Bigg) dx < \infty $$
for every $\alpha_0 > 0$.
\end{theo}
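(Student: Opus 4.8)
The plan is to follow the proof of Theorem~\ref{Stein-type_lemma} step by step, replacing the explicit pair $\Psi_0(t)=t[\log(e+t)]^{-1}$ and $\psi_0(s)\sim[\log(e+s)]^{-1}$ by the abstract density $\psi_x$, and replacing the ad hoc integration-by-parts identity used there by the layer-cake representation $\Psi(x,M(f)(x))=\int_0^{M(f)(x)}\psi_x(s)\,ds$. The role previously played by $(\alpha\log\alpha)^{-1}$ is now played by $s^{-1}\psi_x(s)$, the factor $s^{-1}$ being produced, as before, by the weak-type inequality \eqref{wt_sv} and its reverse \eqref{wt_reverse}. The one genuinely new ingredient is a transfer lemma: although property (2) is a comparison of the functions $\Psi(x,\cdot)$, whereas the conclusion involves the densities $\psi_x$, comparability does pass to the logarithmic integral. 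Indeed, integrating by parts,
$$\int_{\alpha_0}^{t}\frac{\psi_x(s)}{s}\,ds=\frac{\Psi_x(t)}{t}-\frac{\Psi_x(\alpha_0)}{\alpha_0}+\int_{\alpha_0}^{t}\frac{\Psi_x(s)}{s^2}\,ds,$$
and, using the doubling (1) to bound $\Psi_x(t)/t\lesssim 1+\int_{\alpha_0}^{t}\Psi_x(s)s^{-2}\,ds$, one gets $\int_{\alpha_0}^{t}s^{-1}\psi_x(s)\,ds\sim_B 1+\int_{\alpha_0}^{t}\Psi_x(s)s^{-2}\,ds$, whose right-hand side is, up to constants depending only on $B$, insensitive to replacing $x$ by any comparable reference point on $B$ (or on its dilate $\rho B$). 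Property (3) ensures all these integrals are finite near the lower endpoint.

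For sufficiency, assuming the integral condition, I first dominate $\Psi(x,t)\le\Psi_{x_2}(t)$ pointwise in $t$ by (2) and run the argument for the single doubling Orlicz function $\Psi_{x_2}$. Writing $\int_B\Psi_{x_2}(M(f))\,dx=\int_B\int_0^{M(f)(x)}\psi_{x_2}(s)\,ds\,dx$ and discarding the range $s<\alpha_0$ (which contributes $\Psi_{x_2}(\alpha_0)|B|\lesssim_B 1$), Fubini turns the remainder into $\int_{\alpha_0}^{\infty}\psi_{x_2}(s)\,|\{x\in B:M(f)(x)>s\}|\,ds$; here the density no longer depends on $x$, so \eqref{wt_sv} applies directly, supplies $s^{-1}$, and a second Fubini yields $\lesssim 1+\int_{\{|f|>\alpha_0/2\}}|f(y)|\int_{\alpha_0}^{2|f(y)|}s^{-1}\psi_{x_2}(s)\,ds\,dy$. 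The doubling (1) absorbs the upper limit $2|f|$ back to $|f|$ via $\Psi_{x_2}(t)\le\Psi_{x_2}(\alpha_0)+t\int_{\alpha_0}^{t}s^{-1}\psi_{x_2}(s)\,ds$, and the transfer lemma, applied pointwise in $y\in B$, replaces $\psi_{x_2}$ by $\psi_y$, bounding everything by the assumed finite quantity.

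For necessity I assume $M(f)\in L_\Psi(B)$ and first extend to $M(f)\in L_\Psi(\rho B)$ exactly as in the proof of Theorem~\ref{Stein-type_lemma}, using the off-support decay \eqref{away}, the inversion bound \eqref{pw_ineq_M}, and the polar-coordinate change of variables, all of which survive once \eqref{eq:doubling} is replaced by the doubling provided by (1) and (2). Dominating $\Psi(x,\cdot)\ge\Psi_{x_1}(\cdot)$ from below and using $\Psi_{x_1}(M(f))\ge\int_{\alpha_1}^{M(f)}\psi_{x_1}$ for a suitable cutoff $\alpha_1$, Fubini produces $\int_{\alpha_1}^{\infty}\psi_{x_1}(s)\,|\{x\in\rho B:M(f)(x)>s\}|\,ds$. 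I then apply \eqref{wt_reverse} at the shifted level $s/c_1$, so that the constant $c_1$ is transferred into the \emph{upper limit} of the resulting inner integral (where (1) absorbs it) rather than into the density; a final Fubini gives a lower bound of the form $\gtrsim\int_B|f(y)|\int^{|f(y)|}s^{-1}\psi_{x_1}(s)\,ds\,dy$, and the transfer lemma converts this into the asserted condition with $\psi_y$.

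The main obstacle is the \emph{decoupling of the spatial variable}: in the (reverse) weak-type steps the point $x$ where $M(f)$ is large is distinct from the point $y$ where $f$ is large, whereas the target condition must feature $\psi$ evaluated at the variable of $f$. Property (2) reconciles the two, but since it compares the functions $\Psi(x,\cdot)$ and not their densities, the crux is the transfer lemma above, together with the discipline of never rescaling a density: each multiplicative constant generated by the weak-type inequalities ($\tfrac12$ in \eqref{wt_sv}, $c_1$ in \eqref{wt_reverse}) must be pushed into a limit of integration and absorbed by the doubling (1), whose constant $C_0$ may exceed $2$. Verifying that these absorptions, and the transfer lemma, hold uniformly in $x$ over the compact ball is where the technical care concentrates.
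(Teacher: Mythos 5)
Your proposal is correct and follows exactly the route the paper intends: Theorem \ref{generalstein} is stated there without a separate proof, precisely as the outcome of rerunning the proof of Theorem \ref{Stein-type_lemma} with the abstract density $\psi_x$ in place of $[\log(e+\cdot)]^{-1}$, the weak-type bounds \eqref{wt_sv} and \eqref{wt_reverse} again supplying the factor $s^{-1}$, and property (2) playing the role of \eqref{eq:equivalence}. Your ``transfer lemma'' (and the bookkeeping that pushes the constants from the weak-type inequalities into limits of integration, to be absorbed by the doubling in property (1)) is the explicit formalization of steps the paper leaves implicit, not a different argument.
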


Theorem \ref{generalstein} applies to certain Orlicz spaces considered in connection with convergence of Fourier series, see e.g. \cite{Antonov, Sjolin}, and the recent paper by V. Lie \cite{Lie}; we give some sample applications in Subsection \ref{applications_periodic}. 


\section{Proof of the Zygmund-type Theorem for $H^{\log} (\mathbb{R}^d)$}\label{zygmundproof}
We begin with the following elementary lemmas.

\begin{lemma}\label{decreasing} Consider the function $g : [0, \infty )^2 \rightarrow [0, \infty)$ given by
$$ g (s,t) : = \frac{1}{ \log (e + t) + \log (e + s)}, \quad (s,t) \in [0, \infty )^2. $$
Then one has
$$ \Psi (x,t) \leq \int_0^t g(|x| , \tau) d \tau \leq 2 \Psi (x,t) $$ 
for all $(x,t) \in \mathbb{R}^d \times [0, \infty)$.
\end{lemma}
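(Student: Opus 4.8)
The plan is to prove the two-sided bound $\Psi(x,t) \leq \int_0^t g(|x|,\tau)\,d\tau \leq 2\Psi(x,t)$ by comparing the integrand $g(|x|,\tau)$ with its value at the upper endpoint $\tau = t$. First I would observe that, with $x$ held fixed, the function $\tau \mapsto g(|x|,\tau)$ is decreasing in $\tau$, since $\log(e+\tau)$ is increasing, so the denominator $\log(e+\tau) + \log(e+|x|)$ grows with $\tau$. Note also that $\Psi(x,t) = t \cdot g(|x|,t)$ directly from the definitions, which is the key identity tying the two sides together.

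For the \emph{lower} bound $\int_0^t g(|x|,\tau)\,d\tau \geq \Psi(x,t)$, I would use monotonicity: since $g(|x|,\tau) \geq g(|x|,t)$ for all $\tau \in [0,t]$, integrating over $[0,t]$ gives $\int_0^t g(|x|,\tau)\,d\tau \geq t\, g(|x|,t) = \Psi(x,t)$.

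The \emph{upper} bound $\int_0^t g(|x|,\tau)\,d\tau \leq 2\Psi(x,t)$ is the part requiring slightly more care and is where I expect the main (though still mild) obstacle to lie, since a naive bound by $g(|x|,0)$ is useless as it gives no $t$-dependence. Here I would split the interval at the midpoint $t/2$. On $[t/2,t]$ monotonicity gives $g(|x|,\tau)\leq g(|x|,t/2)$, so that piece contributes at most $(t/2)g(|x|,t/2)$; on $[0,t/2]$ one bounds $g(|x|,\tau)\leq g(|x|,0)$, but this is too crude, so instead I would aim to show $g(|x|,t/2)\leq 2\,g(|x|,t)$ and handle the whole integral at once. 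Concretely, the cleanest route is to bound the full integral by $t\,g(|x|,0)$ and then compare $g(|x|,0)$ with $g(|x|,t)$; the ratio $g(|x|,0)/g(|x|,t) = [\log(e+t)+\log(e+|x|)]/[1+\log(e+|x|)]$ is \emph{not} bounded by $2$ for large $t$, so this crude approach fails and confirms that the splitting argument is genuinely needed.

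The reliable argument is therefore the dyadic splitting: write $[0,t] = [0,t/2]\cup[t/2,t]$, bound each half by its left endpoint value times its length, and then show the doubling-type inequality $g(|x|,\tau) \leq 2\,g(|x|,t)$ holds on a suitable range, reducing everything to multiples of $\Psi(x,t) = t\,g(|x|,t)$. The delicate point is controlling the contribution near $\tau = 0$ where $g$ is largest; the resolution is that this region has small measure relative to $t$ when combined with the boundedness of $\tau g(|x|,\tau)$, or alternatively one verifies directly that $\int_0^t g(|x|,\tau)\,d\tau \leq 2 t\, g(|x|,t)$ by the substitution showing the denominator at $\tau$ is at least half the denominator at $t$ on the bulk of the interval. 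I would finish by assembling the two inequalities into the claimed chain, valid uniformly in $x \in \mathbb{R}^d$ and $t \in [0,\infty)$.
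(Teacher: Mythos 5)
Your lower bound is correct and is exactly the paper's argument: $\tau\mapsto g(|x|,\tau)$ is decreasing, so $\int_0^t g(|x|,\tau)\,d\tau\geq t\,g(|x|,t)=\Psi(x,t)$. The genuine gap is in the upper bound, which your proposal never actually closes. The single split at $t/2$ cannot work in the form you describe: on $[t/2,t]$ the endpoint bound combined with $g(|x|,t/2)\leq 2g(|x|,t)$ does give a contribution at most $\Psi(x,t)$, but on $[0,t/2]$ the left endpoint value is $g(|x|,0)$, and
$$ \frac{(t/2)\,g(|x|,0)}{\Psi(x,t)}=\frac{\log(e+t)+\log(e+|x|)}{2\,(1+\log(e+|x|))}\longrightarrow\infty \quad \text{as } t\to\infty \text{ with } x \text{ fixed;}$$
moreover your proposed resolution --- that the problematic region near $\tau=0$ has ``small measure relative to $t$'' --- is simply false for $[0,t/2]$, whose measure is $t/2$. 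To make your scheme rigorous one must iterate the splitting over \emph{all} dyadic scales $[t2^{-k-1},t2^{-k}]$, $k\geq 0$, bound the $k$-th piece by $\Psi(x,t2^{-k-1})$, and then invoke a quantitative doubling property of $\Psi$ in the $t$-variable (the analogue of \eqref{eq:doubling}, which gives $\Psi(x,t2^{-j})\leq 2^{-j}(1+j\log 2)\,\Psi(x,t)$) to sum the series. This does produce a complete proof, but only of the weaker bound $\int_0^t g(|x|,\tau)\,d\tau\leq (1+2\log 2)\,\Psi(x,t)$, and $1+2\log 2\approx 2.39>2$; refining the splitting ratio only approaches the constant $2$ in the limit without attaining it. So as written your plan contains no complete argument, and its natural completion does not prove the inequality with the stated constant.

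The paper obtains the factor $2$ exactly by a different and cleaner mechanism, a monotone-weight trick: one checks by differentiation that $\tau\mapsto\tau^{1/2}g(|x|,\tau)$ is \emph{increasing} on $(0,\infty)$ --- positivity of the derivative reduces to $(e+\tau)\left[\log(e+\tau)+\log(e+|x|)\right]>2\tau$, which is immediate since the bracket is at least $2$ --- and then writes
$$ \int_0^t g(|x|,\tau)\,d\tau=\int_0^t \tau^{-1/2}\,\tau^{1/2}g(|x|,\tau)\,d\tau\leq t^{1/2}g(|x|,t)\int_0^t \tau^{-1/2}\,d\tau=2\,t\,g(|x|,t)=2\,\Psi(x,t).$$
If you want to salvage your approach, either settle for the weaker constant $1+2\log 2$ (which would in fact suffice for every application of the lemma in the paper, since only the equivalence $\Psi(x,t)\sim\int_0^t g(|x|,\tau)\,d\tau$ is ever used), or replace the splitting by the monotone-weight argument, which is what actually produces the constant $2$.
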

\begin{proof}
The function $t\mapsto g(s,t)=[ \log \big( (e+t)(e+s) \big)]^{-1}$ is decreasing, so clearly 
$$	\int_0^t g(|x|,s)\dd s\geq tg(|x|,t)=\Psi(x,t). $$ 
We now address the upper bound. A calculation yields that
$$ \partial_t (t^\epsilon g(|x|,t))=\frac{t^{\epsilon}}{\log (e+t)+\log(1+|x|)}\Bigg(\frac{\epsilon}{t}-\frac{1}{(e+t)[\log(e+t)+\log(e+|x|) ]}\Bigg), $$
and we observe that the term within the parenthesis is positive if, and only if,
$$ \frac{\epsilon}{t}-\frac{1}{(e+t)[\log(e+t)+\log(e+|x|) ]}>0, $$
which for $\epsilon=1/2$ is equivalent to the inequality 
$$ (e+t)[\log(e+t)+\log(e+|x|)]>2t.	$$
But clearly 
$$ (e+t)[\log(e+t)+\log(e+|x|)]\geq 2(e+t)>2t. $$
Thus $s\mapsto s^\epsilon g(|x|,s)$ is increasing for $\epsilon =1/2$, which implies that 
$$ \int_0^t g(|x|,s)d s=\int_0^t s^{-\epsilon} s^{\epsilon}g(|x|,s) d s\leq \frac{1}{1-\epsilon}\Psi(x,t)=2\Psi(x,t) $$
and this completes the proof of the lemma.
\end{proof}

\begin{lemma}\label{translation}
Let $x_0 \in \mathbb{R}^d$ be fixed and for $u \in S(\mathbb{R}^d)$ define $\langle \tau_{x_0}f , u \rangle := \langle f, \tau_{-x_0}u \rangle$, where $\tau_{-x_0} u (x)  := u (x-x_0)$, $x \in \mathbb{R}^d$. 

Then $f\in H^{\log}(\mathbb{R}^d)$ if, and only if, $\tau_{x_0}f \in H^{\log}(\mathbb{R}^d)$.
\end{lemma}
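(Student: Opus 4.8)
The plan is to reduce the statement to a pointwise comparison of the weight $\Psi$ along the shift $x \mapsto x + x_0$, after first observing that the smooth maximal function $M_{\phi}$ commutes with translations. Since $H^{\log} (\mathbb{R}^d)$ is defined purely in terms of the size of $M_{\phi} (f)$ measured against $\Psi$, this is the natural route.

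First I would record that $M_{\phi}$ intertwines with $\tau_{x_0}$. Unwinding the definition of $\tau_{x_0} f$ as a tempered distribution and using that convolution with $\phi_{\epsilon}$ commutes with translation, for each $\epsilon > 0$ one has $(\tau_{x_0} f \ast \phi_{\epsilon})(x) = (f \ast \phi_{\epsilon})(x - x_0)$. Taking the supremum over $\epsilon > 0$ gives the clean identity
$$ M_{\phi} (\tau_{x_0} f)(x) = M_{\phi} (f)(x - x_0), \quad x \in \mathbb{R}^d. $$
Consequently, after the change of variables $y = x - x_0$,
$$ \int_{\mathbb{R}^d} \Psi \brkt{x, M_{\phi} (\tau_{x_0} f)(x)} \, dx = \int_{\mathbb{R}^d} \Psi \brkt{y + x_0, M_{\phi} (f)(y)} \, dy, $$
so that $\tau_{x_0} f \in H^{\log} (\mathbb{R}^d)$ if, and only if, the right-hand side is finite. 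Everything therefore reduces to comparing $\Psi (y + x_0, t)$ with $\Psi (y, t)$ uniformly in $t$.

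Here I would exploit that the only $x$-dependence of $\Psi (x, t) = t \Lbrkt{\log (e + t) + \log (e + |x|)}^{-1}$ enters through the term $\log (e + |x|)$. The key estimate is that there is a constant $A = A (x_0) \geq 1$ with
$$ A^{-1} \leq \frac{\log (e + |y + x_0|)}{\log (e + |y|)} \leq A \quad \text{for all } y \in \mathbb{R}^d. $$
This follows because the displayed ratio is continuous and strictly positive on $\mathbb{R}^d$ and tends to $1$ as $|y| \to \infty$ (both numerator and denominator being comparable to $\log |y|$, using $|y| - |x_0| \leq |y + x_0| \leq |y| + |x_0|$); hence it is bounded above and below by positive constants depending only on $x_0$. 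Writing $a = \log (e + t) \geq 1$, $b = \log (e + |y|)$, and $b' = \log (e + |y + x_0|)$, the comparison $A^{-1} b \leq b' \leq A b$ together with $A \geq 1$ yields
$$ A^{-1} \leq \frac{a + b'}{a + b} \leq A, $$
so that $\Psi (y + x_0, t) \sim_{x_0} \Psi (y, t)$ with constants independent of both $t$ and $y$.

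Integrating this uniform pointwise comparison against $M_{\phi} (f)$ shows that the two integrals above are comparable, whence one is finite precisely when the other is. Interchanging the roles of $f$ and $\tau_{x_0} f$, that is, replacing $x_0$ by $-x_0$, delivers the converse implication and completes the proof. The only mildly delicate point is the uniform-in-$t$ two-sided bound for the weights $\Psi (\cdot + x_0, t)$ and $\Psi (\cdot, t)$; this is precisely where the slowly varying nature of the map $x \mapsto \log (e + |x|)$ is used, and it is what makes $H^{\log}$ translation invariant despite $\Psi$ genuinely depending on the spatial variable.
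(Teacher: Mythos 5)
Your proof is correct, and its skeleton coincides with the paper's: both first use the translation invariance of $M_{\phi}$ together with a change of variables to reduce the claim to comparing the weights $\Psi(\cdot \pm x_0, t)$ and $\Psi(\cdot, t)$. Where you diverge is in how that comparison is executed. The paper splits the translated integral into a near region $\{|x| \le 4|x_0|\}$ and a far region $\{|x| > 4|x_0|\}$, bounds each piece separately by $\int_{\mathbb{R}^d} \Psi(x, M_{\phi}(f)(x))\,dx$, and so obtains only the one-sided finiteness implication, recovering the converse from the symmetry $x_0 \mapsto -x_0$. You instead isolate a single global estimate, namely $\log(e+|y+x_0|) \sim_{x_0} \log(e+|y|)$ uniformly in $y$, and then observe that for $a = \log(e+t) \ge 1$ this ratio bound passes to the sums $a + b'$ versus $a + b$, giving $\Psi(y+x_0,t) \sim_{x_0} \Psi(y,t)$ uniformly in both variables. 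This eliminates the region decomposition entirely and buys something slightly stronger: two-sided comparability of the modulars $\int \Psi(x, M_{\phi}(\tau_{x_0}f)(x))\,dx$ and $\int \Psi(x, M_{\phi}(f)(x))\,dx$ with a constant depending only on $|x_0|$, rather than mere preservation of finiteness. One trivial remark: with the paper's convention $\langle \tau_{x_0}f, u\rangle = \langle f, \tau_{-x_0}u\rangle$, $\tau_{-x_0}u(x) = u(x-x_0)$, one actually gets $M_{\phi}(\tau_{x_0}f)(x) = M_{\phi}(f)(x+x_0)$ rather than $M_{\phi}(f)(x-x_0)$; this sign slip is harmless, since your comparison constant depends only on $|x_0|$ and the whole argument is invariant under replacing $x_0$ by $-x_0$.
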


\begin{proof} Note that it suffices to prove that for any  $x_0 \in \mathbb{R}^d$ and $f \in H^{\log}(\mathbb{R}^d)$ one also has that $\tau_{x_0}f \in H^{\log}(\mathbb{R}^d)$. 

Towards this aim, fix an $x_0 \in \mathbb{R}^d$ and an $f \in H^{\log} (\mathbb{R}^d)$. Observe that, by using a change of variables and the translation invariance of $M_{\phi}$, we may write 
$$ I: = \int_{\mathbb{R}^d} \frac{M_{\phi} (\tau_{x_0} f ) (x) }{ \log (e + |x|) + \log (e + M_{\phi} (\tau_{x_0} f) (x) ) } dx 
$$ as
$$ I = \int_{\mathbb{R}^d} \frac{M_{\phi} ( f ) (x ) }{ \log (e + |x-x_0|) + \log (e + M_{\phi} ( f) (x) ) } dx.  $$
To prove that $I < \infty$, we split
$$ I = I_1 + I_2 ,$$
where
$$ I_1 := \int_{ |x| > 4 |x_0| } \frac{M_{\phi} ( f ) (x ) }{ \log (e + |x-x_0|) + \log (e + M_{\phi} ( f) (x) ) } dx $$
and
$$ I_2 := \int_{ |x| \leq 4 |x_0| } \frac{M_{\phi} ( f ) (x ) }{ \log (e + |x-x_0|) + \log (e + M_{\phi} ( f) (x) ) } dx . $$
To show that $I_1 < \infty$, observe that for $|x| > 4 |x_0|$ one has
$$ \frac{4 |x-x_0| }{5} < |x| < \frac{4 |x-x_0| } {3} $$
and so,
\begin{align*}
 I_1 & \lesssim  \int_{ |x| > 4 |x_0| } \frac{M_{\phi} ( f ) (x ) }{ \log (e + |x|) + \log (e + M_{\phi} ( f) (x) ) } dx \\
& \leq \int_{ \mathbb{R}^d } \frac{M_{\phi} ( f ) (x) }{ \log (e + |x|) + \log (e + M_{\phi} ( f) (x) ) } dx .
\end{align*}
Since $f \in H^{\log} (\mathbb{R}^d)$, the last integral is finite and we thus deduce that $I_1 < \infty$.  Next, to show that $I_2 < \infty$, we have 
\begin{align*}
I_2 & \leq \int_{ |x| \leq 4 |x_0| } \frac{M_{\phi} ( f ) (x ) }{ 1 + \log (e + M_{\phi} ( f) (x) ) } dx \\
& \lesssim_{|x_0|} \int_{ |x| \leq 4 |x_0| } \frac{M_{\phi} ( f ) (x) }{ \log (e + |x|) + \log (e + M_{\phi} (f) (x) ) } dx \\
& \leq  \int_{ \mathbb{R}^d } \frac{M_{\phi} (f) (x ) }{ \log (e + |x|) + \log (e + M_{\phi} (f) (x) ) } dx
\end{align*}
and so, $I_2 < \infty$, as $f \in H^{\log} (\mathbb{R}^d)$. Therefore, $I < \infty$ and it thus follows that $\tau_{x_0} f \in H^{\log} (\mathbb{R}^d )$. \end{proof}

To obtain the desired variant of Zygmund's theorem, we shall use the fact that functions in $H^{\log} (\mathbb{R}^d)$ have mean zero; see  Lemma 1.4 in \cite{BIJZ}. One can actually establish the following more general fact.
\begin{lemma}\label{1.4_distr} If $f \in H^{\log} (\mathbb{R}^d)$ is a compactly supported distribution, then $\widehat{f}(0)= 0$. 
\end{lemma}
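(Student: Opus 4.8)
The plan is to show that a compactly supported distribution $f \in H^{\log}(\mathbb{R}^d)$ must satisfy $\widehat{f}(0) = 0$, where $\widehat{f}(0) = \langle f, 1\rangle$ in an appropriate sense. First I would recall the key mechanism: since $f$ is compactly supported, say $\mathrm{supp}(f) \subseteq B(0,R)$, the smooth approximations $f \ast \phi_\epsilon$ are bona fide functions, and for large $|x|$ the behavior of $M_\phi(f)(x)$ is controlled by the large-scale averages of $f$. The heuristic is that $f \ast \phi_\epsilon(x)$ for $\epsilon$ chosen comparable to $|x|$ behaves like $|x|^{-d} \langle f, 1\rangle$ (up to constants coming from $\phi$), so if $\widehat{f}(0) \neq 0$ then $M_\phi(f)(x) \gtrsim |x|^{-d}$ for all large $|x|$.

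The core of the argument is a lower bound. Suppose for contradiction that $\widehat{f}(0) = \langle f, 1\rangle = c \neq 0$. For $|x|$ large, choose $\epsilon = \epsilon(x) \sim |x|$ so that the ball $B(x,\epsilon)$, on which $\phi_\epsilon(x - \cdot)$ is supported, contains $\mathrm{supp}(f)$ and moreover the dilate is large enough that $\phi_\epsilon(x-y) = c_d \epsilon^{-d}$ (using the defining property $\phi(z) = c_d$ for $|z| \le 1/2$) for every $y \in \mathrm{supp}(f)$. Then
$$ (f \ast \phi_\epsilon)(x) = \langle f, \phi_\epsilon(x - \cdot)\rangle = c_d \epsilon^{-d} \langle f, 1\rangle = c_d \epsilon^{-d} c, $$
since $\phi_\epsilon(x - \cdot)$ is identically the constant $c_d\epsilon^{-d}$ on a neighborhood of $\mathrm{supp}(f)$ and $f$ is compactly supported (so pairing against a function that is constant near the support equals $c$ times that constant). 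With $\epsilon \sim |x|$ this gives $M_\phi(f)(x) \geq |(f\ast\phi_\epsilon)(x)| \gtrsim |x|^{-d}$ for $|x|$ large. I would then feed this lower bound into the defining integral and show it diverges: using Lemma \ref{decreasing} (or directly), on the region $\{|x| > R_0\}$ we have
$$ \int_{\mathbb{R}^d} \Psi(x, M_\phi(f)(x))\, dx \gtrsim \int_{|x| > R_0} \frac{|x|^{-d}}{\log(e + |x|) + \log(e + |x|^{-d})}\, dx. $$
Passing to polar coordinates, this is comparable to $\int_{R_0}^\infty \frac{dr}{r \log(e+r)}$, which diverges. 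This contradicts $f \in H^{\log}(\mathbb{R}^d)$, forcing $c = 0$.

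The main obstacle I anticipate is making the distributional pairing rigorous, since $\phi_\epsilon(x - \cdot)$ is merely smooth and compactly supported, not a Schwartz function that equals $1$ globally, so I must justify that $\langle f, \phi_\epsilon(x-\cdot)\rangle = c_d\epsilon^{-d}\langle f, 1\rangle$ whenever the cutoff is constant on a neighborhood of $\mathrm{supp}(f)$. This follows because $f$ has compact support: one can write $\phi_\epsilon(x - \cdot) = c_d\epsilon^{-d}\eta$ on a neighborhood of $\mathrm{supp}(f)$ for any $\eta \in C_c^\infty$ with $\eta \equiv 1$ there, and distributional pairing only sees the values of the test function near $\mathrm{supp}(f)$. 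A secondary technical point is verifying that the order of the distribution does not interfere, i.e. that $\langle f, 1\rangle$ is well-defined for a compactly supported $f$, which it is by the standard extension of compactly supported distributions to $C^\infty(\mathbb{R}^d)$. Once these pairing identities are in place, the remaining estimates are the elementary polar-coordinate computation above, for which the equivalence in \eqref{eq:equivalence}-type bounds and Lemma \ref{decreasing} streamline the divergence argument.
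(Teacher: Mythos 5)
Your proposal is correct and follows essentially the same route as the paper's proof: both test $f$ against $\phi_\epsilon(x-\cdot)$ with $\epsilon\sim|x|$ (the paper takes $\epsilon=4|x|$), use that this test function is constant $c_d\epsilon^{-d}$ on a neighborhood of $\mathrm{supp}(f)$ to get $M_\phi(f)(x)\gtrsim |x|^{-d}|\langle f,1\rangle|$, and then derive a contradiction from the divergence of $\int_{|x|>R_0}|x|^{-d}[\log(e+|x|)]^{-1}\,dx$. The only cosmetic differences are that you dispense with the paper's (inessential) appeal to the translation Lemma \ref{translation} and compute the divergent integral directly in polar coordinates rather than routing the lower bound through Lemma \ref{decreasing}.
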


\begin{proof} Let $f$ be a compactly supported distribution in $ H^{\log} (\mathbb{R}^d)$. In light of Lemma \ref{translation}, we may assume, without loss of generality,  that $f$ is supported in a closed ball $B_r$ centered at $0$ with radius  $r >0$, i.e. $\mathrm{supp} (f) \subseteq B_r := \{ x \in \mathbb{R}^d : |x|< r \}$. 

Let $\chi \in C_c^\infty(\mathbb{R}^d)$ be supported inside $B_r$ and be equal to $1$ on the support of $f$.

To prove the lemma, take an $x \in \mathbb{R}^d$ with $|x| > 2r$ and observe that, by the definition of $\phi_{\epsilon}$, if we take $\epsilon=4|x|$ we have that
$$
    | f \ast \phi_{\epsilon} (x) |=\frac{1}{\epsilon^d}\big|\langle f,\chi \phi(\epsilon^{-1}(\cdot-x)) \rangle \big| \gtrsim \frac{1}{|x|^d} |\langle f,\chi \rangle|
$$
as we then have $\phi(\epsilon^{-1}(x-y))=c_d$ for $y\in B_r$. Notice that $|\langle f,1-\chi \rangle|=0$.  Therefore, for all $|x| > 2r$ and $\epsilon=4|x|$, we have
$$ 
    M_{\phi} (f) (x) \gtrsim \frac{1}{|x|^d}  | \langle f,1 \rangle |,
$$
and so, we deduce from Lemma \ref{decreasing} that
$$ \Psi (x, M_{\phi}(f) (x)) \gtrsim \frac{1}{|x|^d \log (e + |x|)}  | \widehat{f}(0)| $$
for $|x|$ large enough.

Hence, if $\widehat{f}(0)\neq 0$, then the function $\Psi (x, M_{\phi} (f)(x)) $ does not belong to $L^1 (\mathbb{R}^d)$, which is a contradiction.
\end{proof}

We are now ready to prove Theorem \ref{LlogL}.



\begin{proof}[Proof of Theorem \ref{LlogL}] Let $B$ denote the closed unit ball in $\mathbb{R}^d$. Fix a function $f$ with $\mathrm{supp} (f) \subseteq B $, $\int_B f (y) dy = 0$ and $f \in L \log \log L (B )$. First of all, observe that 
$$ M_{\phi} (f) (x) \lesssim M(f)(x) \quad \mathrm{for}\ \mathrm{all}\ x \in \mathbb{R}^d, $$
where $M(f)$ denotes the Hardy-Littlewood maximal function of $f$; see e.g. Theorem 2 on pp. 62--63 in \cite{Singular}. We thus deduce from Lemma \ref{decreasing} that
$$  \Psi (x, M_{\phi} (f)(x)) \lesssim \Psi (x, M(f)(x))  \quad \mathrm{for}\ \mathrm{all}\ x \in \mathbb{R}^d $$
and hence, by using Theorem \ref{Stein-type_lemma}, we obtain
\begin{equation}\label{local_bound}
 \int_{2 B } \Psi (x, M_{\phi}(f)(x)) dx \lesssim 1 + \int_{2 B } |f(x)| \log^+ \log^+ |f(x)| dx,
\end{equation}
where $2 B  : = \{x \in \mathbb{R}^d : |x| \leq 2\} $.

To estimate the integral of $\Psi (x, M_{\phi} (f) (x))$ for $x \in \mathbb{R}^d \setminus 2 B $, we shall make use of the cancellation of $f$. To be more specific, observe that if $|x|>2$ then for every $\epsilon < |x|/2$, one has that
$$f \ast \phi_{\epsilon}(x) = \frac{1}{\epsilon^d} \int_{B } f(y) \phi \Big( \frac{x-y}{\epsilon} \Big) dy = 0$$
since $|x-y|/\epsilon > 1$ whenever $y \in B $. Therefore, we may restrict ourselves to $\epsilon \geq |x|/2$ when $|x|>2$. Hence, for $\epsilon \geq |x|/2$, by exploiting the cancellation of $f$ and using a Lipschitz estimate on $\phi_{\epsilon}$, we obtain
\begin{align*}
|f \ast \phi_{\epsilon} (x)| &= \frac{1}{\epsilon^d} \Bigg| \int_B f(y)  \phi \Big( \frac{x-y}{\epsilon} \Big) dy \Bigg| = \frac{1}{\epsilon^d} \Bigg| \int_B f(y) \Big[  \phi \Big( \frac{x-y}{\epsilon} \Big) - \phi \Big( \frac{x}{\epsilon} \Big) \Big] dy \Bigg| \\
&\lesssim_{\phi} \frac{1}{\epsilon^{d+1}} \int_B |y \cdot f(y)| dy \lesssim \frac{1}{|x|^{d+1} } \Bigg[ 1 + \int_B |f(y)| \log^+ \log^+ |f(y)| dy \Bigg].
\end{align*}
We thus deduce that, for every $x \in \mathbb{R}^d \setminus 2 B $,
$$ |M_{\phi} (f) (x)| \lesssim \frac{1}{|x|^{d+1}} \Bigg[1 + \int_B |f(y)| \log^+ \log^+ |f(y)| dy \Bigg] $$
and so, 
\begin{align*}
& \int_{\mathbb{R}^d \setminus 2 B } \Psi (x, M_{\phi} (x)) dx  \\
&  \lesssim \Bigg[ 1 + \int_B |f(y)| \log^+ \log^+ |f(y)| dy \Bigg] \int_{\mathbb{R}^d \setminus 2 B } \frac{1} {|x|^{d+1 }\log (e +|x|)} dx \\
& \lesssim 1 + \int_B |f(y)| \log^+ \log^+ |f(y)| dy,
\end{align*}
as desired. Therefore, Theorem \ref{LlogL} is now established by using the last estimate combined with \eqref{local_bound}.
\end{proof}

\subsection{A partial converse}\label{periodic}

As in the classical setting of the real Hardy space $H^1$, see \cite{Stein_LlogL}, Theorem \ref{LlogL} has a partial converse. To be more precise, if a function $f$ is positive on an open set $U$ and $f$ belongs to $ H^{\log} (\mathbb{R}^d)$, then the function $f \in L \log \log L (K)$ for every compact set $K\subset U$. 

Indeed, to see this, note that if $f$ is as above then
$$ M_{\phi} (f) (x) \gtrsim M (f \cdot \eta_K ) (x) \quad \mathrm{for}\ \mathrm{all} \ x \in K,$$
where $\eta_K$ is an appropriate Schwartz function with $\eta_K \sim 1$ on $K$; see e.g. Section 5.3 in Chapter III in \cite{Big_Stein}. Hence, by using Lemma \ref{decreasing} and Theorem \ref{Stein-type_lemma}, we get
\begin{align*}
 \int_{\mathbb{R}^d}  \Psi (x, M_{\phi} (f) (x) ) dx \geq  \int_K  \Psi (x, M_{\phi} (f) (x) ) dx &\gtrsim \int_K \Psi (x, M (\eta_K \cdot f) (x) ) dx \\
&  \gtrsim  1 +   \int_K |f(x)| \log^+ \log^+ |f(x)| dx. 
\end{align*}


\section{Dyadic $H^{\log}$ on $\mathbb{T}$}\label{intro_d}
 
In this section we introduce a dyadic variant of $H^{\log} (\mathbb{T})$ and in the next section we shall prove that it admits a characterisation in terms of atomic decompositions. Here, we adopt the convention that  $\mathbb{T} \cong \mathbb{R}/2\pi \mathbb{Z}$. 

Before we proceed, let us recall that, following \cite{BIJZ}, $H^{\log} (\mathbb{T})$
is defined as the class of all $f \in \mathcal{D'}$ whose non-tangential maximal function $f^{\ast}$ satisfies
$$\int_0^{2\pi} \Psi_0  (  | f^{\ast}  ( \theta)|  ) d \theta < \infty , $$
where $\Psi_0$ is as above; $\Psi_0 (t) : = t \cdot [\log (e +t)]^{-1}$, $t \geq 0$.  Here, $\mathcal{D}'$ denotes the class of all distributions on $\mathbb{T}$. For $f \in H^{\log} (\T)$, one sets 
$$ \| f \|_{H^{\log} (\mathbb{T})} :  = \inf \Bigg\{ \lambda > 0 : (2\pi)^{-1} \int_0^{2\pi} \Psi_0  (  \lambda^{-1}| f^{\ast}  ( \theta)|  ) d \theta \le 1  \Bigg\} . $$

In what follows, for $f \in \mathcal{D}'$ and $n \in \mathbb{Z}$, we write $\widehat{f} (n) : = \langle f, e_n \rangle$ where $e_n (\theta) : = e^{i n \theta}$, $\theta \in [0, 2 \pi)$. 

\begin{rmk}\label{p-LlogL}
Recall that for $p>0$, the real Hardy space $H^p (\mathbb{T})$ is defined as the class of all $f \in \mathcal{D}'$ such that $f^{\ast} \in L^p (\mathbb{T})$.

It is well-known that for $p \geq 1$, elements in $H^p (\mathbb{T})$ are functions and moreover, $H^p (\mathbb{T}) \equiv L^p (\mathbb{T})$ for $p > 1$.  Clearly, 
$$ H^1 (\mathbb{T}) \subset H^{\log} (\mathbb{T}) \subset H^p (\mathbb{T})  \quad \text{for all } 0 < p < 1. $$
\end{rmk}

\subsection{Definition of dyadic $H^{\log}$ on $\mathbb{T}$}
Let $\mathcal{I}$ be a given system of dyadic arcs in $\mathbb{T}$. In particular, following the formulation of \cite{Mei}, we assume that $\mathcal{I}$ is of the form
$$  \mathcal{I}^{\delta} := \big\{ \big[ 2 \pi (2^{-k} m + \delta ) , 2 \pi ( 2^{-k} (m+1) + \delta )  \big) : k \in \mathbb{N}_0, m =  0, \cdots, 2^k-1 \big\}  $$
for some  $ \delta \in [0,1) $.

If $I \in \mathcal{I} $, then $h_I$ denotes the cancellative Haar function associated to $I$ that is,
$$ h_I := | I |^{-1/2} \big( \chi_{I_-} - \chi_{I_+} \big) ,$$
where $I_-$ and $I_+$ denote the left and right halves of $I$, respectively. 
 
Here, we shall adopt the following convention: if $\mathbf{f} = \{ f_I \}_{I \in \mathcal{I}} \cup \{ f_0 \}$ is a collection of complex numbers, we consider the associated sequence of functions $ (f_N)_{N \in \mathbb{N}}$ given by
$$ f_N (\theta) : = f_0 + \sum_{\substack{I \in \mathcal{I} : \\ |I| \geq 2^{-N}}} f_I h_I (\theta), \quad \theta \in \mathbb{T} .  $$
As usual, the dyadic square function $S_{\mathcal{I}} [f_N]$ of $f_N$ is given by
$$ S_{\mathcal{I}} [f_N] ( \theta ) : = |f_0| +  \Big( \sum_{\substack{I \in \mathcal{I} : \\ |I| \geq 2^{-N}}} | f_I |^2 \frac{\chi_I (\theta )} {| I |} \Big)^{1/2}, \quad \theta \in \mathbb{T} . $$
One then defines the dyadic square function $S_{\mathcal{I}} [\mathbf{f}]$ of $\mathbf{f}$ as the pointwise limit
$$ S_{\mathcal{I}} [\mathbf{f}] := \lim_{N \rightarrow \infty} S_{\mathcal{I}} [f_N] =  |f_0| +  \Big( \sum_{ I \in \mathcal{I}  } | f_I |^2 \frac{\chi_I} {| I |} \Big)^{1/2}. $$
 
\begin{definition} We define $h^{\log}_{\mathcal{I}} (\mathbb{T})$ as the class of all collections $\mathbf{f} = \{ f_I \}_{I \in \mathcal{I}} \cup \{ f_0 \}$ of complex numbers satisfying
$$ \int_{\mathbb{T}} \Psi_0 \big( S_{\mathcal{I}} [ \mathbf{f}] ( \theta ) \big) d \theta < \infty .$$
If $\mathbf{f} \in h^{\log}_{\mathcal{I}} (\mathbb{T})$, we set
$$ \| \mathbf{f} \|_{h^{\log}_{\mathcal{I}} (\mathbb{T}) } : = \inf \Bigg\{ \lambda > 0 : (2\pi)^{-1}  \int_{\mathbb{T}} \Psi_0 \big( \lambda^{-1}  S_{\mathcal{I}} [ \mathbf{f}] ( \theta ) \big) d \theta \leq 1 \Bigg\}.  $$

\end{definition}


\subsubsection{Some remarks}\label{remarks}

It can easily be seen that there exists an absolute constant $C_0>1$ such that for all $\mathbf{f}, \mathbf{g} \in h^{\log}_{\mathcal{I}} (\mathbb{T})$ and $\mu \in \mathbb{C}$ one has 
$$ \| \mathbf{f} + \mathbf{g} \|_{ h^{\log}_{\mathcal{I}} (\mathbb{T}) } \leq C_0 (\|  \mathbf{f} \|_{ h^{\log}_{\mathcal{I}} (\mathbb{T}) )} +  \| \mathbf{g} \|_{ h^{\log}_{\mathcal{I}} (\mathbb{T}) }) $$ 
and
$$ \| \mu \mathbf{f } \|_{ h^{\log}_{\mathcal{I}} (\mathbb{T}) } = |\mu| \| \mathbf{f } \|_{ h^{\log}_{\mathcal{I}} (\mathbb{T}) }.$$
Moreover, $\mathbf{f} = \mathbf{0}$ if, and only if, $\| \mathbf{f} \|_{h^{\log}_{\mathcal{I}}} (\mathbb{T}) = 0$.
In particular, $\| \cdot \|_{h^{\log}_{\mathcal{I}} (\mathbb{T})}$ is a quasi-norm on the linear space $h^{\log}_{\mathcal{I}} (\mathbb{T}) $ and one can show that $(h^{\log}_{\mathcal{I}} (\mathbb{T}), \| \cdot \|_{h^{\log}_{\mathcal{I}} (\mathbb{T})})$ is complete.

Let $F_{\mathcal{I}} (\mathbb{T})$ denote the class of all functions $f \in L^1 (\mathbb{T})$ such that the collection $\{ \langle f, h_I \rangle \}_{I \in \mathcal{I}}\cup \{ \widehat{f}(0) \} $ consists of finitely many non-zero terms. Note that if $f \in F_{\mathcal{I}} (\mathbb{T}) $ one can write $f = \widehat{f}(0)+ \sum_{I \in \mathcal{I}} \langle f, h_I \rangle h_I$ and moreover, by identifying functions in $F_{\mathcal{I}} (\mathbb{T})$ with the corresponding collections of their Haar coefficients, we may regard $F_{\mathcal{I}} (\mathbb{T})$ as a dense subspace of $h^{\log}_{\mathcal{I}} (\mathbb{T}) $. 


\subsection{$h^{\log}_{\mathcal{I}} (\mathbb{T})$ and $H^{\log}_{\mathcal{I}} (\mathbb{T})$}\label{prelim}

Our goal in this section is to show that every collection in $h^{\log}_{\mathcal{I}} (\mathbb{T})$ can be regarded in a `canonical' way as an element of $ \mathcal{D}' $. More specifically, we shall prove that if $\mathbf{f} \in h^{\log}_{\mathcal{I}} (\mathbb{T})$, then the corresponding sequence of functions $(f_N)_{N \in \mathbb{N}}$ in $F_{\mathcal{I}} (\mathbb{T})$ converges in the sense of distributions to some $f \in \mathcal{D}' $. If $\iota$ denotes the associated map from $h^{\log} (\T)$ to $\mathcal{D}'$, then one defines $H^{\log}_{\mathcal{I}} (\mathbb{T}):= \iota \big[  h^{\log}_{\mathcal{I}} (\mathbb{T}) \big] $ and $\| f \|_{H^{\log}_{\mathcal{I}} (\mathbb{T})} := \| \mathbf{f} \|_{ h^{\log}_{\mathcal{I}} (\mathbb{T})} $ for $\mathbf{f} \in h^{\log}_{\mathcal{I}} (\mathbb{T})$ with $f = \iota ( \mathbf{f} )$.

To this end, let $\mathbf{f} = \{ f_I \}_{I \in \mathcal{I}} \cup \{ f_0 \}$ be a given collection in $h^{\log}_{\mathcal{I}} (\mathbb{T})$. It can easily be seen that for every $p \in (1/2,1)$ one has    
\begin{equation}\label{ineq_el}
\Psi_0 (t) \geq (1-p) t^p  \quad \text{ for all } t \geq 0 . 
\end{equation}
Hence, by using \eqref{ineq_el}, we deduce that
\begin{equation}\label{implication_p}
\| S_{\mathcal{I}} [ \mathbf{f} ]  \|_{L^p (\mathbb{T})} \leq D(p,\mathbf{f}),
\end{equation}
where $D(p,\mathbf{f}) $ is a (finite) positive constant given by
\begin{equation}\label{D(f,p)}
D(p,\mathbf{f}) : = (p-1)^{-1/p} \Big( \int_{\mathbb{T}} \Psi_0 ( S_{\mathcal{I}} [\textbf{f}] ( \theta ) ) d \theta \Big)^{1/p} .
\end{equation}
Since
$$ S_{\mathcal{I}} [ \mathbf{f} ]  \geq \Bigg( \sum_{ \substack { I \in \mathcal{I} : \\ |I| = 2^{-k}}  } | f_I |^2 \frac{\chi_I} {| I |} \Bigg)^{1/2} \quad \text{for all } k \in \mathbb{N}_0, $$
we deduce from \eqref{implication_p} that
$$
D(p,\mathbf{f}) \geq \Bigg(\int_{\mathbb{T}} \Big(  \sum_{ \substack { I \in \mathcal{I} : \\  |I| = 2^{-k}}  } | f_I |^2 \frac{\chi_I ( \theta )} {| I |}  \Big)^{p/2} d \theta \Bigg)^{1/p}  \quad \text{for all } k \in \mathbb{N}_0.  $$
We thus have
\begin{equation}\label{lower_est}
D(p,\mathbf{f}) \geq  \Bigg( \sum_{\substack { I \in \mathcal{I} : \\ |I| = 2^{-k}}  }  | f_I |^p  | I |^{1-p/2}   \Bigg)^{1/p} \quad \text{for all } k \in \mathbb{N}_0,
\end{equation}
where we used the fact that the arcs $I \in \mathcal{I}$ with $|I| = 2^{-k}$ are mutually disjoint. We shall combine \eqref{lower_est} with the following standard estimate.

\begin{lemma}\label{est} There exists an absolute constant $C_0>0$ such that
$$ | \langle \phi, h_I \rangle | \leq C_0 \| \phi'\|_{L^{\infty} (\mathbb{T})} |I |^{3/2} $$
for all  $\phi \in C^{1} (\mathbb{T})$ and for all $I \in \mathcal{I}$.  
\end{lemma}

\begin{proof} We may assume without loss of generality that $I \subset \mathbb{T}$ can be regarded as an interval in $[0,2 \pi )$. By using a change of variables and the mean value theorem, we have
\begin{align*}
 | \langle \phi, h_I \rangle | & = | I |^{-1/2} \Big| \int_{I_-} \phi ( \theta ) d \theta - \int_{I_+} \phi ( \theta ) d \theta  \Big|  \\
 & =  | I |^{-1/2} \Big| \int_{I_-} [  \phi ( \theta )  -   \phi ( \theta + |I|/2 ) ]  d \theta  \Big| \\
 & \lesssim \| \phi'\|_{L^{\infty} (\mathbb{T})} |I |^{3/2},
\end{align*}
as desired.
\end{proof}

As mentioned above, we shall prove that the sequence of functions $(f_N)_{N \in \mathbb{N}}$  associated to $\mathbf{f}$ converges in the sense of distributions. Towards this aim, by using Lemma \ref{est}, for $N, M \in \mathbb{N}$ with $N>M$, we have
\begin{align*}
 | \langle f_N - f_M, \phi \rangle | = \Bigg| \sum_{\substack { I \in \mathcal{I} : \\  2^{-M} > |I| \geq 2^{-N}}  } f_I \langle h_I , \phi \rangle \Bigg|  
 & \leq \sum_{k=M+1}^N \sum_{\substack { I \in \mathcal{I} : \\  |I| =  2^{-k}}  } | f_I | | \langle  \phi , h_I  \rangle   | \\
&  \lesssim \| \phi' \|_{L^{\infty} (\mathbb{T})} \sum_{k=M+1}^N   \sum_{\substack { I \in \mathcal{I} : \\  |I| =  2^{-k}}  } | f_I | | I |^{3/2} .
\end{align*}
Fix a $p \in (1/2, 1)$ and note that the previous estimate implies that 
\begin{equation}\label{est_p}
| \langle f_N - f_M, \phi \rangle | \lesssim \| \phi' \|_{L^{\infty} (\mathbb{T})} \sum_{k=M+1}^N \Bigg( \sum_{\substack { I \in \mathcal{I} : \\  |I| =  2^{-k}}  } | f_I |^p | I |^{3p/2} \Bigg)^{1/p} . 
\end{equation}
Since
\begin{align*}
  \sum_{k=M+1}^N \Bigg( \sum_{\substack { I \in \mathcal{I} : \\ |I| =  2^{-k}} } | f_I |^p | I |^{3p/2} \Bigg)^{1/p} & = \sum_{k=M+1}^N \Bigg( \sum_{\substack { I \in \mathcal{I} : \\ |I| =  2^{-k}} } | f_I |^p |I|^{1- p/2} |I|^{2p - 1}   \Bigg)^{1/p}  \\
  & = \sum_{k=M+1}^N  2^{- (2 -1/p) k} \Bigg(   \sum_{\substack { I \in \mathcal{I} : \\  |I| =  2^{-k}}  } | f_I |^p |I|^{1- p/2} \Bigg)^{1/p},
\end{align*}
it follows from \eqref{lower_est} that
\begin{equation}\label{upper_N}
 \sum_{k=M+1}^N \Bigg(  \sum_{\substack { I \in \mathcal{I} : \\  |I| =  2^{-k}}  } | f_I |^p | I |^{3p/2} \Bigg)^{1/p}   \lesssim   \Bigg( \sum_{k=M+1}^N  2^{- (2 -1/p) k} \Bigg) D(p,\mathbf{f}).
 \end{equation}
Therefore, \eqref{est_p} and \eqref{upper_N} imply that $(f_N)_{N \in \mathbb{N}}$ is Cauchy in $\mathcal{D}'$ and so, it converges to some $f \in \mathcal{D}'$.
A completely analogous argument shows that
$$  \sup_{N \in \mathbb{N}} | \langle f_N, \phi \rangle | \leq |f_0| \| \phi \|_{L^{\infty}(\mathbb{T})} + c_p  D(p,  \mathbf{f} )   \| \phi' \|_{L^{\infty} (\mathbb{T})} \quad \text{for all } \phi \in C^1 (\mathbb{T}), $$
where $c_p > 0 $ is a constant that depends only on $p$. Hence,  
\begin{equation}\label{sup_N}
| \langle f , \phi \rangle | \leq |f_0| \| \phi \|_{L^{\infty} (\mathbb{T})} + c_p  D(p,  \mathbf{f} )   \| \phi' \|_{L^{\infty} (\mathbb{T})} \quad \text{for all } \phi \in C^1 (\mathbb{T}). 
\end{equation}

 
\section{Atomic decomposition of $H^{\log}_{\mathcal{I}} (\mathbb{T})$}\label{d_atom}

\subsection{Atomic decomposition of $H^{\log} (\mathbb{T})$}

 In \cite{Ky}, Ky showed that $H^{\log} (\mathbb{R}^d)$ admits a decomposition in terms of $H^{\log} (\mathbb{R}^d)$-atoms, see also Chapter 1 in \cite{YLK}. An adaptation of Ky's argument to the periodic setting establishes an analogous result in the periodic setting. 
 
 In order to state the characterisation of  $H^{\log} (\mathbb{T})$ in terms of atomic decompositions, we need the following definitions.

\begin{definition}[\cite{Ky}]\label{H^log_atoms}
Let $I \subset \mathbb{T}$ be an arc. A measurable function $a_I$ on $\mathbb{T}$ is said to be an $H^{\log} (\mathbb{T})$-atom associated to $I$ whenever
\begin{itemize}
\item $\mathrm{supp} (a_I) \subseteq I$,
\item $\int a_I (\theta) d \theta =0$,  and
\item  $\| a_I \|_{L^{\infty} (\mathbb{T})} \leq \| \chi_I \|_{L^{\log} (\mathbb{T})} $.
\end{itemize}
\end{definition}

Following \cite{Ky, YLK}, see also \cite{V}, we give the definition of atomic $H^{\log} (\mathbb{T}) $.

\begin{definition}\label{H^log} The atomic Hardy-Orlicz space $H^{\log}_{\mathrm{ at } } (\mathbb{T})$ is defined as the space of all $f \in \mathcal{D}' $ that can be written as 
$$ f - \widehat{f} (0) = \sum_{k \in \mathbb{N}} b_{I_k} \quad \text{in } \mathcal{D}',$$
where $b_{I_k} = \mu_k a_{I_k}$ with $a_{I_k} $ being an atom in $H^{\log} (\mathbb{T})$ associated to some arc $I_k$ and $\{ \mu_k \}_{k \in \mathbb{Z}}$ is a collection of complex scalars such that
$$ \sum_{k \in \mathbb{N}} | I_k | \Psi_0 ( \| b_{I_k} \|_{L^{\infty} (\mathbb{T}) } ) < \infty. $$ 
If $\{ b_{I_k} \}_{k \in \mathbb{N}}$ is as above, let
$$ \Lambda_{\infty} \big( f, \{ b_{I_K} \}_{k \in \mathbb{N}} \big) := \inf \Bigg\{ \lambda>0 :  \Psi_0 (\lambda^{-1} |\widehat{f} (0)|) + \sum_{k \in \mathbb{N} } | I_k | \Psi_0 (  \lambda^{-1} \| b_{I_k} \|_{L^{\infty} (\mathbb{T})})  \leq 1 \Bigg\}  $$
and define
$$ \| f \|_{H^{\log}_{ \mathrm{ at} } (\mathbb{T})} := \inf \Bigg\{ \Lambda_{\infty} \big( f, \{ b_{I_k} \}_{k \in \mathbb{N}} \big) : f - \widehat{f} (0)=  \sum_{k \in \mathbb{N} } b_{I_k} \ \mathrm{with} \ \{b_{I_k}\}_{k \in \mathbb{N} } \ \mathrm{being}\ \mathrm{as} \ \mathrm{above} \Bigg\} . $$
\end{definition}

By arguing as in \cite{Ky} one can show that 
\begin{equation}\label{equivalence_atoms}
H^{\log}_{ \mathrm{at} } ( \mathbb{T} ) \cong H^{\log} (\mathbb{T}) .
\end{equation}

\subsection{Atomic decomposition of $H^{\log}_{\mathcal{I}} (\mathbb{T})$}

\begin{definition} Let $I $ be an arc in $ \mathcal{I}$.
A measurable function $a_I$ on $\mathbb{T}$ is said to be an $H^{\log}_{\mathcal{I}} $-atom associated to $I \in \mathcal{I}$ whenever
\begin{itemize}
\item $\mathrm{supp} (a_I) \subset I$,
\item $\int_I a_I ( \theta ) d \theta = 0$, and
\item $ \| a_I \|_{L^{\infty} (\mathbb{T})} \leq \| \chi_I \|_{L^{\log} (\mathbb{T})} $.
\end{itemize}
\end{definition}

In analogy with the non-dyadic case, define $H^{\log}_{\mathrm{at}, \mathcal{I}} (\mathbb{T})$ to be the class of all $f \in \mathcal{D}'$ for which there exists  a sequence $\{ \beta_{I_k} \}_{k \in \mathbb{N}}$ of scalar multiples of $H^{\log}_{\mathcal{I}} $-atoms  such that
$$ f - \widehat{f} (0) = \sum_{k \in \mathbb{N}} \beta_{I_k} \quad \text{in } \mathcal{D}' $$
and
$$ \sum_{k \in \mathbb{N}} | I_k | \Psi_0 \big( \| \beta_{I_k} \|_{L^{\infty} (\mathbb{T})} \big) < \infty .$$
For $f \in H^{\log}_{\text{at}, \mathcal{I}} (\mathbb{T})$, we set
$$ \| f \|_{H^{\log}_{\text{at}, \mathcal{I}} (\mathbb{T})} : = \inf \Bigg\{ \Lambda_{\infty} \big(f, \{ \beta_{I_k} \}_{k \in \mathbb{N}} \big) : f - \widehat{f} (0) =  \sum_{k \in \mathbb{N}} \beta_{I_k}  \text{ in } \mathcal{D}'  \Bigg\}, $$
where
$$ \Lambda_{\infty} \big( f, \{ \beta_{I_k} \}_{k \in \mathbb{N}} \big) : = \inf \Bigg\{ \lambda > 0 : \Psi_0 ( \lambda^{-1} |\widehat{f} (0)| ) + \sum_{k \in \mathbb{N}} | I_k | \Psi_0 \big( \lambda^{-1} \| \beta_{I_k} \|_{L^{\infty} (\mathbb{T})} \big) \leq 1 \Bigg\}. $$

We shall prove that $H^{\log}_{\mathcal{I}} (\mathbb{T})$ is contained in $H^{\log}_{\text{at}, \mathcal{I}} (\mathbb{T})$. To this end, we shall show first that every function $f \in F_{\mathcal{I}} (\mathbb{T})$ admits a decomposition in terms of $H^{\log}_{\mathcal{I}} $-atoms.

\begin{proposition}\label{at_dec} For every $f \in F_{\mathcal{I}} (\mathbb{T})$ there exists a finite collection of multiples of $H^{\log}_{ \mathcal{I}} $-atoms $\{ \beta_{I_k} \}_{k = 1}^N$  such that:
\begin{itemize}
\item $f - \widehat{f}(0) = \sum_{k=1}^N \beta_{I_k}$  and
\item $ \Lambda_{\infty} (f,  \{ \beta_{I_k} \}_{k=1}^N) \leq C_0 \| f \|_{H^{\log}_{\mathcal{I}} (\mathbb{T})} $,
\end{itemize}
where $C_0 > 0$ is an absolute constant that is independent of $f$. 
\end{proposition}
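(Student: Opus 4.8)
The plan is to prove Proposition \ref{at_dec} by a Calder\'on--Zygmund (stopping-time) decomposition adapted to the square-function quasi-norm, and then to convert the resulting \emph{additive} estimate into the Luxemburg-type bound for $\Lambda_{\infty}$. Concretely, it suffices to produce the atoms $\{\beta_{I_k}\}$ together with
$$ \Psi_0(|\widehat{f}(0)|) + \sum_{k} |I_k|\, \Psi_0\big(\|\beta_{I_k}\|_{L^{\infty}(\mathbb{T})}\big) \lesssim \int_{\mathbb{T}} \Psi_0\big( S_{\mathcal{I}}[\mathbf{f}](\theta)\big)\, d\theta . $$
The passage from here to $\Lambda_{\infty}(f,\{\beta_{I_k}\}) \leq C_0\|f\|_{H^{\log}_{\mathcal{I}}(\mathbb{T})}$ is then a rescaling $f \mapsto \lambda^{-1}f$, under which both the decomposition and $S_{\mathcal{I}}$ are homogeneous, combined with the doubling property \eqref{eq:doubling} of $\Psi_0$, which lets one absorb the implicit constant into the factor $C_0$ in front of $\lambda = \|f\|_{H^{\log}_{\mathcal{I}}(\mathbb{T})}$.

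To build the atoms I would run the dyadic stopping-time scheme on the martingale maximal function $f^{\ast} := \sup_N |f_N|$ attached to $\mathbf{f}$. For each $j \in \mathbb{Z}$ set $\Omega_j := \{f^{\ast} > 2^j\}$; since $f^{\ast}(\theta) = \sup_I |\langle f\rangle_I| \chi_I(\theta)$, this is a union of maximal dyadic arcs $\{Q_j^i\}_i$ with $\bigsqcup_i Q_j^i = \Omega_j$ and $\Omega_{j+1} \subseteq \Omega_j$. Define $\beta_{j,i}$ to be the restriction to $Q_j^i$ of the difference of the stopped martingales at levels $j$ and $j+1$, i.e.\ the portion of the Haar expansion of $f$ living inside $Q_j^i$ but outside the arcs of $\Omega_{j+1}$. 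Since $f \in F_{\mathcal{I}}(\mathbb{T})$, only finitely many $Q_j^i$ are nontrivial, so the family is finite. That each $\beta_{j,i}$ is supported in $Q_j^i$, is a finite combination of Haar functions $h_I$ with $I \subseteq Q_j^i$, and hence has vanishing integral, is immediate, and telescoping in $j$ gives $f - \widehat{f}(0) = \sum_{j,i}\beta_{j,i}$.

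The heart of the matter is the uniform bound $\|\beta_{j,i}\|_{L^{\infty}(\mathbb{T})} \lesssim 2^j$. On the part of $Q_j^i$ lying outside $\Omega_{j+1}$ one has $|f| \le f^{\ast} \le 2^{j+1}$ pointwise, while on each child arc inside $\Omega_{j+1}$ the atom equals a difference of dyadic averages of $f$; here the \emph{regularity of the dyadic filtration} (a child has exactly half the measure of its parent) keeps these averages comparable to $2^j$, the only danger being the one-step ``overshoot'' caused by a single large Haar coefficient. The key observation is that such an overshoot is simultaneously registered by $S_{\mathcal{I}}$ on the very same arc, so those exceptional contributions get reabsorbed into the right-hand side. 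Granting the size bound, one sums using \eqref{eq:doubling} and $\bigsqcup_i Q_j^i = \Omega_j$:
$$ \sum_{j,i}|Q_j^i|\,\Psi_0\big(\|\beta_{j,i}\|_{L^{\infty}(\mathbb{T})}\big) \lesssim \sum_{j}|\Omega_j|\,\Psi_0(2^j) \sim \int_{\mathbb{T}}\Psi_0\big(f^{\ast}(\theta)\big)\,d\theta, $$
the last equivalence coming from decomposing $\mathbb{T}$ over the layers $\{2^j < f^{\ast} \le 2^{j+1}\}$ and Abel summation, again via \eqref{eq:doubling}.

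The remaining, and deepest, ingredient is the comparison $\int_{\mathbb{T}}\Psi_0(f^{\ast}) \lesssim \int_{\mathbb{T}}\Psi_0(S_{\mathcal{I}}[\mathbf{f}])$, that the maximal function is controlled by the square function in the Musielak--Orlicz setting. This is a Davis/Burkholder--Davis--Gundy-type inequality, valid because $\Psi_0$ is doubling of lower type strictly below $1$ and the dyadic filtration is regular. I expect this maximal-versus-square comparison, together with the uniform $L^{\infty}$ control of the atoms (the overshoot analysis), to be the main obstacle; everything else is bookkeeping with the quasi-norm and the elementary properties of $\Psi_0$ in \eqref{eq:doubling}. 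One should note that the Hardy--Littlewood maximal function cannot replace $f^{\ast}$ here: it is genuinely larger in $L^{\Psi_0}$, reflecting the usual $L\log L$ loss, which is exactly why the martingale maximal function must be used.
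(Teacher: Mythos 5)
Your proposal is correct and is essentially the paper's own proof: the paper also runs the dyadic Calder\'on--Zygmund stopping-time decomposition on the martingale maximal function $M_{\mathcal{I}}[f]$ at thresholds $2^n$, takes as atoms the localized differences of stopped (good) functions $\beta_{n,k}=(b_{2^n}-b_{2^{n+1}})\chi_{I(2^n,k)}$ with $\|\beta_{n,k}\|_{L^\infty(\mathbb{T})}\lesssim 2^n$, sums via the layer-cake identity and the lower-type property of $\Psi_0$, and invokes the Burkholder--Gundy comparison $\int_{\mathbb{T}}\Psi_0(M_{\mathcal{I}}[f])\,d\theta \sim \int_{\mathbb{T}}\Psi_0(S_{\mathcal{I}}[f])\,d\theta$ (its inequality \eqref{m-s}, citing \cite{BG}) exactly as you do. The only superfluous element in your sketch is the claim that Haar-coefficient ``overshoots'' must be reabsorbed into the right-hand side via $S_{\mathcal{I}}$: the regularity of the dyadic filtration alone gives the uniform bound (in the paper, $\|g_\lambda\|_{L^\infty(\mathbb{T})}\leq\lambda$ and hence $\|\beta_{n,k}\|_{L^\infty(\mathbb{T})}\leq 3\cdot 2^n$), so no such absorption is needed.
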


\begin{proof} The proof is a variant of the non-dyadic case presented in Chapter 1 of \cite{YLK}.

Fix an $f \in F_{\mathcal{I}} (\mathbb{T})$. Without loss of generality, we may assume that $\widehat{f} (0) = 0$. Notice that, since $\Psi_0$ satisfies the conditions of \cite[Section 3]{BG}, one has
\begin{equation}\label{m-s}
c_1 \int_{\mathbb{T}} \Psi_0 ( M_{\mathcal{I}} [f] ( \theta ) ) d \theta \leq \int_{\mathbb{T}} \Psi_0 ( S_{\mathcal{I}} [f] ( \theta ) ) d \theta \leq c_2 \int_{\mathbb{T}} \Psi_0 ( M_{\mathcal{I}} [f] ( \theta ) ) d \theta
\end{equation}
where $c_1, c_2 >0$ are absolute constants. Here, $M_{\mathcal{I}} [f]$ is the dyadic maximal function of $f$, namely
$$ M_{\mathcal{I}} [f] ( \theta ) := \sup_{N \in \mathbb{N}_0}|\mathbb{E}_{\mathcal{I},N} [f] ( \theta )  |, $$
where
$$ \mathbb{E}_{\mathcal{I},N} [f] : = \sum_{\substack{ I \in \mathcal{I}: \\ |I| = 2^{-N}} } \langle f \rangle_I \chi_I  .$$
Notice that, as $f \in F_{\mathcal{I}}(\mathbb{T})$, there exists an $N_0 \in \mathbb{N}$ such that  
\begin{equation}\label{fact}
\mathbb{E}_{\mathcal{I},N} [f] \equiv f \quad \text{ for all } N \geq N_0. 
\end{equation}
For $\lambda >0$, by arguing as on pp. 33--34 in \cite{Duo}, one can show that the set
$$ \Omega_{\lambda} := \{ \theta \in \mathbb{T} : M_{\mathcal{I}} [f] ( \theta ) > \lambda \}  $$
 can be written as a finite union of mutually disjoint arcs in $\mathcal{I}$. We may thus write $\Omega_{\lambda} = \bigcup_k I (\lambda, k)$, where the union is  finite and the arcs $\{ I (\lambda, k) \}_k$ are mutually disjoint and in $\mathcal{I}$.  We then define
$$ 
g_{\lambda} (\theta) := 
\begin{cases}
f(\theta) \quad &\text{if } x \in \mathbb{T} \setminus \Omega_{\lambda}, \\
\langle f \rangle_I \quad &\text{if } \theta \in I (\lambda, k) 
\end{cases}
$$
and $b_{\lambda}: =  f - g_{\lambda}  = \sum_{k} b_{\lambda, k} $, where $b_{\lambda, k} := \chi_{I (\lambda, k)} b $. By using \eqref{fact}, one can easily check that
\begin{equation}\label{fact_bound}
\| g_{\lambda} \|_{L^{\infty} (\mathbb{T})} \leq \lambda.
\end{equation}

Note that for $N \in \mathbb{N}$ large enough, one has $\Omega_{2^n} = \emptyset$ for all $n \geq N$ and so, $g_{2^n} \equiv f$ for all $n \geq N$. We thus have
$$ f = \sum_{n=0}^{N-1} (g_{2^{n+1}} - g_{2^n} ) =  \sum_{n=0}^{N-1} ( b_{2^n} - b_{2^{n+1}} ).   $$
We write $ \Omega_{2^n} = \bigcup_k I (2^n, k)$ and define $ \beta_{n,k} :=  ( b_{2^n} - b_{2^{n+1}} ) \chi_{ I (2^n, k) }  $. It can easily be seen that $  \int_{I (2^n,k)} \beta_{n,k} (x') d x' = 0$. Moreover, it follows from \eqref{fact_bound} that
$$ \| \beta_{n,k} \|_{ L^{\infty} (\mathbb{T})} \leq \| g_{2^n} \|_{L^{\infty} (\mathbb{T})} + \| g_{2^{n+1}} \|_{L^{\infty} (\mathbb{T})} \leq 3 \cdot 2^n.  $$
Hence, $\beta_{n,k}$ are multiples of $H^{\log}_{\mathcal{I}}$-atoms and moreover,
\begin{align*}
\sum_{n=0}^{N-1} \sum_k | I (2^n, k) | \Psi_0 (  \| \beta_{n,k} \|_{L^{\infty} (\mathbb{T})} / \lambda ) & \lesssim \sum_{n=0}^{N-1} \sum_k | I (2^n, k) | \Psi_0 (  2^n / \lambda ) \\
& =  \sum_{n=0}^{N-1} \Psi_0 (  2^n / \lambda )  \sum_k | I (2^n, k) |  \\
& =  \sum_{n=0}^{N-1} \Psi_0 (  2^n / \lambda ) \sum_{l \in \mathbb{N}_0} |\{ 2^{n+l} < M_{\mathcal{I}} [f] \leq 2^{n+l+1} \} | .  
\end{align*}
Note that there exists a $c_0 > 0$ such that $ \Psi_0 (2^{-l} t) \leq 2^{-c_0 l} \Psi_0 (t) $ for all $t \geq 0$ and $l \in \mathbb{N}_0$. Hence, 
\begin{align*}
 & \sum_{n=0}^{N-1} \Psi_0 ( 2^n / \lambda ) \sum_{l \in \mathbb{N}_0} |\{ 2^{n+l} < M_{\mathcal{I}} [f]   \leq 2^{n+l+1} \} |   \\=
   &  \sum_{l \in \mathbb{N}_0} \sum_{n=0}^{N-1} \Psi_0 ( 2^n / \lambda )  |\{ 2^{n+l} < M_{\mathcal{I}} [f]   \leq 2^{n+l+1} \} |  \\ \leq
 &  \sum_{l \in \mathbb{N}_0} \sum_{m \geq l}  \Psi_0 ( 2^{m-l} / \lambda )  |\{ 2^{m} < M_{\mathcal{I}} [f]   \leq 2^{m+1} \} |  \\ \leq
 &  \sum_{l \in \mathbb{N}_0} 2^{-c_0 l }\sum_{m \geq l}  \Psi_0 ( 2^m / \lambda )  |\{ 2^{m} < M_{\mathcal{I}} [f]   \leq 2^{m+1} \} |  \lesssim  \int_{\mathbb{T}} \Psi_0 ( \lambda^{-1}  M_{\mathcal{I}} [f]  (\theta)) d \theta ,
\end{align*}
which combined with \eqref{m-s}, completes the proof of the proposition.
\end{proof}

By arguing as on p. 109 in \cite{Big_Stein}, the density of $F_{\mathcal{I}} (\mathbb{T})$ in $H^{\log}_{\mathcal{I}} (\mathbb{T})$ and Proposition \ref{at_dec} imply the following result.

\begin{proposition}\label{at_dec_full_1}
One has $H^{\log}_{\mathcal{I}} (\mathbb{T}) \subseteq H^{\log}_{\mathrm{at}, \mathcal{I}} (\mathbb{T})$. 
\end{proposition}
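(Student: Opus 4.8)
The plan is to promote the finite-sum atomic decomposition of Proposition \ref{at_dec}, which is valid only for $f \in F_{\mathcal{I}} (\mathbb{T})$, to an infinite atomic decomposition valid for an arbitrary $f \in H^{\log}_{\mathcal{I}} (\mathbb{T})$, by a density-plus-telescoping argument in the spirit of p.\ 109 in \cite{Big_Stein}. The two essential ingredients are already in place: the density of $F_{\mathcal{I}} (\mathbb{T})$ in $H^{\log}_{\mathcal{I}} (\mathbb{T})$ recorded in Subsection \ref{remarks}, and the uniform bound $\Lambda_{\infty}(g, \{\beta_{I_k}\}) \leq C_0 \|g\|_{H^{\log}_{\mathcal{I}}(\mathbb{T})}$ for $g \in F_{\mathcal{I}}(\mathbb{T})$ from Proposition \ref{at_dec}.

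First I would fix $f \in H^{\log}_{\mathcal{I}} (\mathbb{T})$ and, using density together with the quasi-triangle inequality with constant $C_0$ from Subsection \ref{remarks}, extract a sequence $(g_j)_{j \in \mathbb{N}} \subset F_{\mathcal{I}} (\mathbb{T})$ converging to $f$ in $h^{\log}_{\mathcal{I}}(\mathbb{T})$ so rapidly that $\|g_{j+1} - g_j\|_{H^{\log}_{\mathcal{I}}(\mathbb{T})} \leq 2^{-j} \|f\|_{H^{\log}_{\mathcal{I}}(\mathbb{T})}$ (say, after passing to a subsequence and absorbing the quasi-norm constant), with $g_1$ itself controlled by a fixed multiple of $\|f\|_{H^{\log}_{\mathcal{I}}(\mathbb{T})}$. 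Writing the telescoping identity $f - \widehat{f}(0) = (g_1 - \widehat{g_1}(0)) + \sum_{j \geq 1}\big((g_{j+1} - \widehat{g_{j+1}}(0)) - (g_j - \widehat{g_j}(0))\big)$, where the series converges in $\mathcal{D}'$ by the distributional convergence $g_j \to f$ established via \eqref{sup_N} (the continuity estimate shows convergence in $h^{\log}_{\mathcal{I}}$ forces convergence in $\mathcal{D}'$), I would apply Proposition \ref{at_dec} to each difference $g_{j+1} - g_j \in F_{\mathcal{I}}(\mathbb{T})$ to obtain a finite collection of multiples of $H^{\log}_{\mathcal{I}}$-atoms whose $\Lambda_{\infty}$-functional is at most $C_0 \|g_{j+1} - g_j\|_{H^{\log}_{\mathcal{I}}(\mathbb{T})} \leq C_0 2^{-j}\|f\|_{H^{\log}_{\mathcal{I}}(\mathbb{T})}$.

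Concatenating all these finite families over $j$ yields a countable collection $\{\beta_{I_k}\}_{k \in \mathbb{N}}$ of multiples of $H^{\log}_{\mathcal{I}}$-atoms with $f - \widehat{f}(0) = \sum_k \beta_{I_k}$ in $\mathcal{D}'$. It then remains to verify the summability condition $\sum_k |I_k| \Psi_0(\|\beta_{I_k}\|_{L^\infty(\mathbb{T})}) < \infty$, or more precisely that $\Lambda_{\infty}(f, \{\beta_{I_k}\})$ is finite and bounded by a constant times $\|f\|_{H^{\log}_{\mathcal{I}}(\mathbb{T})}$; this is where the argument requires care, because $\Lambda_{\infty}$ is defined through a Luxemburg-type infimum rather than an additive sum, and such functionals do not combine additively. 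The key quantitative tool here is the doubling/scaling property $\Psi_0(2^{-l} t) \leq 2^{-c_0 l}\Psi_0(t)$ already exploited in the proof of Proposition \ref{at_dec}: because the $\Lambda_{\infty}$-contribution of the $j$-th block decays geometrically like $2^{-j}$, one can choose a common normalization $\lambda \sim \|f\|_{H^{\log}_{\mathcal{I}}(\mathbb{T})}$ and sum the modular contributions $\Psi_0(\lambda^{-1}\|\beta_{I_k}\|_\infty)$ across blocks, using the sub-multiplicative decay to control the geometric series by a convergent geometric sum.

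The main obstacle will be precisely this last modular bookkeeping: passing from the per-block estimates $\Lambda_{\infty}(g_{j+1}-g_j, \cdot) \lesssim 2^{-j}\|f\|$ to a single global bound on $\Lambda_{\infty}(f, \{\beta_{I_k}\})$. The subtlety is that each block's $\Lambda_{\infty}$ is an infimum over its own normalizing $\lambda_j$, and one must select a single $\lambda$ for the full collection so that the total modular sum stays below $1$; the geometric decay of the $\lambda_j$ combined with the scaling inequality for $\Psi_0$ is exactly what makes this feasible, but the details of choosing the aggregate $\lambda$ and bounding the tail of the $\mathcal{D}'$-convergent series simultaneously are the delicate point. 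Everything else—the existence of the rapidly convergent approximating sequence, the distributional convergence of the telescoping series, and the atomic nature of each $\beta_{I_k}$—follows routinely from the density statement, \eqref{sup_N}, and Proposition \ref{at_dec}.
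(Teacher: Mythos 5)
Your proposal is correct and follows essentially the same route as the paper: the paper's proof of Proposition \ref{at_dec_full_1} consists precisely of invoking the density of $F_{\mathcal{I}}(\mathbb{T})$ in $h^{\log}_{\mathcal{I}}(\mathbb{T})$, Proposition \ref{at_dec}, and the telescoping argument of p.~109 of \cite{Big_Stein}, which is exactly the scheme you carry out. The details you supply --- the rapidly convergent approximating sequence, distributional convergence of the telescoped series via \eqref{sup_N}, and the modular bookkeeping based on the lower-type inequality $\Psi_0(2^{-l}t)\leq 2^{-c_0 l}\Psi_0(t)$ --- are precisely the content the paper delegates to that citation.
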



\subsection{Proof of the reverse inclusion}

The main result of this section is that the converse of Proposition \ref{at_dec_full_1} also holds.

\begin{proposition}\label{at_dec_full_2}
One has $H^{\log}_{\mathrm{at}, \mathcal{I}} (\mathbb{T}) \subseteq H^{\log}_{  \mathcal{I}} (\mathbb{T})$. 
\end{proposition}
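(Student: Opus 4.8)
The plan is to reduce everything to a single estimate for multiples of $H^{\log}_{\mathcal I}$-atoms and then to sum up, using the pointwise subadditivity of the dyadic square function together with the quasi-concavity of $\Psi_0$. Throughout I write $S_{\mathcal I}[\beta]$ for the square function (without constant term) of a mean-zero $L^\infty$-function, i.e. $S_{\mathcal I}[\beta](\theta)=\big(\sum_{J\in\mathcal I}\abs{\langle\beta,h_J\rangle}^2\chi_J(\theta)/\abs{J}\big)^{1/2}$. The crucial first step is the atomic bound: if $\beta_I$ is a scalar multiple of an $H^{\log}_{\mathcal I}$-atom associated to $I\in\mathcal I$, then $S_{\mathcal I}[\beta_I]$ is supported in $I$ and
$$ \int_{\mathbb T}\Psi_0\big(S_{\mathcal I}[\beta_I](\theta)\big)\,\dd\theta\lesssim \abs{I}\,\Psi_0\big(\norm{\beta_I}_{L^\infty(\mathbb T)}\big). $$
The support statement uses that in a dyadic system any two arcs are nested or disjoint: if $J\cap I=\emptyset$ then $\langle\beta_I,h_J\rangle=0$ by the support condition, while if $I\subsetneq J$ then $h_J$ is constant on $I$ and $\langle\beta_I,h_J\rangle=0$ by cancellation, so only $J\subseteq I$ contribute. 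For the integral I would pass to the least concave majorant $\widetilde{\Psi_0}$ of $\Psi_0$, which satisfies $\Psi_0\leq\widetilde{\Psi_0}\leq 2\Psi_0$ because $\Psi_0$ is increasing with $t\mapsto\Psi_0(t)/t=[\log(e+t)]^{-1}$ decreasing. Jensen's inequality on $I$ for the concave $\widetilde{\Psi_0}$, followed by Cauchy--Schwarz and the Haar--Plancherel identity $\int_{\mathbb T}S_{\mathcal I}[\beta_I]^2=\sum_J\abs{\langle\beta_I,h_J\rangle}^2=\norm{\beta_I}_{L^2(\mathbb T)}^2\leq\abs{I}\norm{\beta_I}_{L^\infty(\mathbb T)}^2$ (valid since $\widehat{\beta_I}(0)=0$ and $\mathrm{supp}(\beta_I)\subseteq I$), gives $\abs{I}^{-1}\int_I S_{\mathcal I}[\beta_I]\leq\norm{\beta_I}_{L^\infty(\mathbb T)}$ and hence the displayed bound.

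The second step is to sum over the atoms. For $f\in H^{\log}_{\mathrm{at},\mathcal I}(\mathbb T)$ with $f-\widehat f(0)=\sum_k\beta_{I_k}$ in $\mathcal D'$, I form the collection $\mathbf f$ given by $f_0=\widehat f(0)$ and $f_J=\sum_k\langle\beta_{I_k},h_J\rangle$. Linearity of the Haar coefficients and the triangle inequality in $\ell^2$ yield the pointwise bound $S_{\mathcal I}[\mathbf f]\leq\abs{\widehat f(0)}+\sum_k S_{\mathcal I}[\beta_{I_k}]$. Since $\widetilde{\Psi_0}$ is concave with $\widetilde{\Psi_0}(0)=0$, it is subadditive, so for nonnegative $a_k$ one has $\Psi_0(\sum_k a_k)\leq\widetilde{\Psi_0}(\sum_k a_k)\leq\sum_k\widetilde{\Psi_0}(a_k)\leq 2\sum_k\Psi_0(a_k)$, the countable sum being handled by monotone convergence. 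Combining this with the atomic bound and Tonelli's theorem gives
$$ \int_{\mathbb T}\Psi_0\big(S_{\mathcal I}[\mathbf f]\big)\,\dd\theta\lesssim \Psi_0\big(\abs{\widehat f(0)}\big)+\sum_k\int_{\mathbb T}\Psi_0\big(S_{\mathcal I}[\beta_{I_k}]\big)\lesssim\Psi_0\big(\abs{\widehat f(0)}\big)+\sum_k\abs{I_k}\Psi_0\big(\norm{\beta_{I_k}}_{L^\infty(\mathbb T)}\big)<\infty. $$
Carrying the dilation parameter $\lambda$ through the same computation and invoking the doubling relation \eqref{eq:doubling} yields $\norm{\mathbf f}_{h^{\log}_{\mathcal I}(\mathbb T)}\lesssim\Lambda_\infty\big(f,\{\beta_{I_k}\}\big)$, and taking the infimum over admissible decompositions gives $\norm{\mathbf f}_{h^{\log}_{\mathcal I}(\mathbb T)}\lesssim\norm{f}_{H^{\log}_{\mathrm{at},\mathcal I}(\mathbb T)}$. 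In particular $\mathbf f\in h^{\log}_{\mathcal I}(\mathbb T)$.

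The final step is to identify $\iota(\mathbf f)$ with $f$. By the construction of Section \ref{prelim}, $\iota(\mathbf f)$ is the $\mathcal D'$-limit of the partial sums $f_N=\widehat f(0)+\sum_{\abs{I}\geq 2^{-N}}f_I h_I$, and I must check this equals $f$. For $\phi\in C^1(\mathbb T)$, I would interchange the summations over $I$ and $k$ (absolutely convergent by Lemma \ref{est} and the summability of $\sum_k\abs{I_k}\Psi_0(\norm{\beta_{I_k}}_{L^\infty})$) to write $\langle f_N-\widehat f(0),\phi\rangle=\sum_k\langle\beta_{I_k},P_N\phi\rangle$, where $P_N$ is the self-adjoint Haar projection onto scales $\geq 2^{-N}$ and $P_N\phi\to\phi$ uniformly. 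Since $\sum_k\langle\beta_{I_k},\psi\rangle=\langle f-\widehat f(0),\psi\rangle$ for every fixed test function $\psi$, letting $N\to\infty$ and controlling the tails by the Cauchy estimates \eqref{est_p} and \eqref{sup_N} gives $\langle\iota(\mathbf f),\phi\rangle=\langle f,\phi\rangle$. Hence $f=\iota(\mathbf f)\in H^{\log}_{\mathcal I}(\mathbb T)$, which is the asserted inclusion.

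I expect this last identification to be the main obstacle: the finiteness of $\int_{\mathbb T}\Psi_0(S_{\mathcal I}[\mathbf f])$ is comparatively soft, but justifying the interchange of summations and the passage $P_N\phi\to\phi$ requires care, since the atomic series converges only in $\mathcal D'$ (not, a priori, in $L^1$) and $h_I$ is not itself a test function; the quantitative convergence estimates \eqref{est_p}--\eqref{sup_N} of Section \ref{prelim} are precisely what render this rigorous. A secondary point is the uniform control of the concave-majorant constants, needed so that one obtains the quasi-norm inequality and not merely set membership.
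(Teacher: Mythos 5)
Your proposal is correct, and its skeleton is the same as the paper's: an atomic square-function estimate (the paper's Lemma \ref{atom_est}), a summation step placing the coefficient collection in $h^{\log}_{\mathcal I}(\mathbb{T})$, and an identification of $\iota(\mathbf f)$ with $f$ via the quantitative bounds of Section \ref{prelim}. The differences lie in execution. For the atomic estimate you use the least concave majorant of $\Psi_0$ and Jensen, whereas the paper argues from $\Psi_0(ct)\leq c\Psi_0(t)$ for $c\geq 1$, Cauchy--Schwarz, and the $L^2$-isometry of $S_{\mathcal I}$; both work, and in fact the concave majorant is dispensable throughout since \eqref{sublinear} already yields countable subadditivity of $\Psi_0$ after monotone convergence. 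For the summation step you define $f_J=\sum_k\langle\beta_{I_k},h_J\rangle$ as an absolutely convergent series (justified, as you indicate, by the a.e.\ finiteness of $\sum_k S_{\mathcal I}[\beta_{I_k}]$), while the paper obtains the same numbers as limits of coefficients of partial sums via a Cauchy argument (Lemma \ref{proj_conv}) plus Fatou; your version is more direct and also delivers the quasi-norm inequality, which the paper leaves implicit. The genuine divergence is the last step, which you rightly identify as the crux. The paper runs a two-parameter interchange with $\delta_{M,N}=B_M-(b_N)_M$, convergence in $M$ uniform in $N$ and convergence in $N$ for each fixed $M$; crucially it only needs $\sup_N D(p,\mathbf b_N)<\infty$ for the \emph{partial sums} $b_N$, which follows from Lemma \ref{atom_est} and \eqref{sublinear} (see \eqref{triangle_2_b}). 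Your single-limit route---move $P_N$ onto the atoms by self-adjointness, so that $\langle f_N,\phi\rangle-\langle f,\phi\rangle=\sum_k\langle P_N\beta_{I_k}-\beta_{I_k},\phi\rangle$, the constant that $P_N\phi$ misses being annihilated by the atoms' cancellation---instead requires the \emph{atom-wise} summability $\sum_k D(p,\beta_{I_k})<\infty$, because by \eqref{est_p} and \eqref{lower_est} each term is $O_{p,\phi}\big(2^{-(2-1/p)N}D(p,\beta_{I_k})\big)$. This is the one point your sketch leaves implicit, and it does hold: by Lemma \ref{atom_est} and \eqref{D(f,p)} one has $D(p,\beta_{I_k})\lesssim_p a_k^{1/p}$ with $a_k:=|I_k|\,\Psi_0(\|\beta_{I_k}\|_{L^\infty(\mathbb{T})})$, and since $1/p>1$, $\sum_k a_k<\infty$ forces $\sum_k a_k^{1/p}<\infty$ (all but finitely many $a_k$ are at most $1$). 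With that line added, your identification step closes, and it is arguably tidier than the paper's double-limit interchange; what the paper's organization buys in exchange is that it never needs summability of any quantity over individual atoms beyond the defining condition $\sum_k a_k<\infty$.
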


In order to prove Proposition \ref{at_dec_full_2}, we shall establish first the following dyadic variant of \cite[Lemma 1.3.5]{YLK}.

\begin{lemma}\label{atom_est}
Let  $I \in \mathcal{I}$ be a given arc. For every $L^{\infty}$-function $\beta_I$ supported in $I$ one has
 $$ \int_I \Psi_0  ( S_{\mathcal{I}} [\beta_I] ( \theta )  ) d \theta \leq \big( 1+ (2 \pi)^{1/2} \big) |I| \Psi_0 ( \| \beta_I \|_{L^{\infty} (\mathbb{T})}). $$
\end{lemma}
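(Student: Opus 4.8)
The plan is to split the square function into its constant and cancellative parts and to estimate each over $I$ separately. Writing $M := \|\beta_I\|_{L^{\infty}(\mathbb{T})}$ and
\[
\tilde{S}(\theta) := \Big( \sum_{J \in \mathcal{I}} |\langle \beta_I, h_J\rangle|^2 \frac{\chi_J(\theta)}{|J|} \Big)^{1/2},
\]
the definition of the square function gives $S_{\mathcal{I}}[\beta_I] = |\widehat{\beta_I}(0)| + \tilde{S}$. The first thing I would record is that $\Psi_0$ is subadditive: since $t \mapsto \Psi_0(t)/t = [\log(e+t)]^{-1}$ is non-increasing, one has $\Psi_0(a+b) \leq \Psi_0(a) + \Psi_0(b)$ for all $a,b \geq 0$. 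Applying this pointwise and integrating yields
\[
\int_I \Psi_0(S_{\mathcal{I}}[\beta_I](\theta))\, d\theta \leq |I|\,\Psi_0\big(|\widehat{\beta_I}(0)|\big) + \int_I \Psi_0(\tilde{S}(\theta))\, d\theta.
\]
Because $|\widehat{\beta_I}(0)| = (2\pi)^{-1}\big|\int \beta_I\big| \leq (2\pi)^{-1} M |I| \leq M$ (using $|I| \leq 2\pi$) and $\Psi_0$ is increasing, the first term is at most $|I|\Psi_0(M)$, which accounts for the summand $1$ in the constant.

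The main work is the cancellative term. The key auxiliary input is an $L^2$ bound for $\tilde{S}$: since the Haar functions $\{h_J\}_{J \in \mathcal{I}}$ form an orthonormal system in $L^2(\mathbb{T})$, Bessel's inequality together with $\mathrm{supp}(\beta_I) \subseteq I$ gives
\[
\int_{\mathbb{T}} \tilde{S}^2\, d\theta = \sum_{J \in \mathcal{I}} |\langle \beta_I, h_J\rangle|^2 \leq \|\beta_I\|_{L^2(\mathbb{T})}^2 \leq M^2 |I|.
\]
I would then split $I$ according to whether $\tilde{S} \leq M$ or $\tilde{S} > M$. On $\{\tilde{S} \leq M\}$ monotonicity of $\Psi_0$ gives $\Psi_0(\tilde{S}) \leq \Psi_0(M)$, contributing at most $|I|\Psi_0(M)$. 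On $\{\tilde{S} > M\}$ I would use $\log(e+\tilde{S}) \geq \log(e+M)$ to write $\Psi_0(\tilde{S}) = \tilde{S}/\log(e+\tilde{S}) \leq \tilde{S}/\log(e+M)$, and then apply the Cauchy--Schwarz inequality on $\mathbb{T}$ together with the Bessel bound above; this is where the factor $(2\pi)^{1/2}$ and the denominator $\log(e+M)$ enter, producing a contribution of the form $(2\pi)^{1/2}|I|\Psi_0(M)$. Summing the three contributions yields the asserted estimate with constant $1 + (2\pi)^{1/2}$.

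The point the argument is designed around, and the only genuine obstacle, is retaining the logarithmic gain. A crude application of $\Psi_0(t) \leq t$ followed by Cauchy--Schwarz and the $L^2$ bound would control the cancellative part only by $M|I|$, rather than by $\Psi_0(M)|I| = M|I|/\log(e+M)$, thereby losing precisely the factor that must be kept. The threshold at height $M$ is what resolves this: it forces $\log(e+\tilde{S}) \geq \log(e+M)$ exactly on the set where $\tilde{S}$ is large, so that the denominator $\log(e+M)$ can be recovered while the small-values region is handled by monotonicity alone. The remaining ingredients---subadditivity of $\Psi_0$, orthonormality of the Haar system, and the support bound $\|\beta_I\|_{L^2(\mathbb{T})}^2 \leq M^2|I|$---are routine and require no cancellation hypothesis on $\beta_I$.
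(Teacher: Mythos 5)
Your argument is sound, and it is essentially the paper's proof in a reorganized form: the paper applies the inequality $\Psi_0(ct)\leq c\,\Psi_0(t)$ (valid for $c\geq 1$) with $c=1+S_{\mathcal{I}}[\beta_I](\theta)/M$, where $M:=\|\beta_I\|_{L^{\infty}(\mathbb{T})}$, which pointwise is exactly your two-case bound $\Psi_0(\tilde S)\leq \Psi_0(M)+\tilde S/\log(e+M)$ rolled into one line; both proofs then conclude by bounding $\int_I \tilde S$ via Cauchy--Schwarz and the $L^2$ bound for the square function. Your explicit separation of the constant term $|\widehat{\beta_I}(0)|$ by subadditivity and the level-set splitting at height $M$ are cosmetic variants of this, so no new idea is involved, and no cancellation hypothesis is needed in either proof.

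Two bookkeeping points, one of which matters. In the large-set step the Cauchy--Schwarz pairing must be
$\int_{I\cap\{\tilde S>M\}}\tilde S\leq |I\cap\{\tilde S>M\}|^{1/2}\,\|\tilde S\|_{L^2(\mathbb{T})}\leq |I|^{1/2}\cdot M|I|^{1/2}=M|I|$:
only the norm of $\tilde S$ may be taken over all of $\mathbb{T}$ (so as to invoke Bessel), while the other factor must be the indicator of a subset of $I$. If one instead takes Cauchy--Schwarz literally ``on $\mathbb{T}$'', i.e.\ with $\|1\|_{L^2(\mathbb{T})}=(2\pi)^{1/2}$, one gets $(2\pi)^{1/2}M|I|^{1/2}$, which scales like $|I|^{1/2}$ rather than $|I|$ and is useless for small arcs. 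Done correctly there is \emph{no} factor $(2\pi)^{1/2}$ in this term at all: your three contributions are each at most $|I|\,\Psi_0(M)$, giving total constant $3\leq 1+(2\pi)^{1/2}$, so the stated inequality does follow --- whereas the sum you actually wrote, $1+1+(2\pi)^{1/2}=2+(2\pi)^{1/2}$, would overshoot the constant claimed in the lemma. (In the paper the $(2\pi)^{1/2}$ comes from a slightly lossy application of Cauchy--Schwarz, $\int_I S_{\mathcal{I}}[\beta_I]\leq (2\pi|I|)^{1/2}\|S_{\mathcal{I}}[\beta_I]\|_{L^2(I)}$, not from anything intrinsic to the estimate.)
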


\begin{proof} We argue as in the proof of \cite[Lemma 1.3.5]{YLK}. More specifically,  since $\Psi$ is increasing and $\Psi_0 (c t) \leq  c \Psi_0 (t)$ for all $c \geq 1$ and $t \geq 0$, we have
\begin{align*}
\int_I \Psi_0 ( S_{\mathcal{I}} [\beta_I] ( \theta ) ) d \theta &= \int_I \Psi_0 \Big( \frac{ S_{\mathcal{I}} [\beta_I] (\theta)}{ \| \beta_I \|_{L^{\infty} (\mathbb{T})} } \| \beta_I \|_{L^{\infty} (\mathbb{T})}  \Big) d \theta \\
& \leq \int_I \Psi_0  \Big( \Big( 1 + \frac{ S_{\mathcal{I}} [\beta_I] (\theta)}{ \| \beta_I \|_{L^{\infty} (\mathbb{T})} }   \Big) \| \beta_I \|_{L^{\infty} (\mathbb{T})}  \Big) d \theta \\
& \leq \Psi_0 ( \| \beta_I \|_{L^{\infty} (\mathbb{T})} ) \int_I \Big(1 + \frac{ S_{\mathcal{I}} [\beta_I] (\theta)}{ \| \beta_I \|_{L^{\infty} (\mathbb{T})} } \Big)  d \theta \\
& =  \Psi_0 ( \| \beta_I \|_{L^{\infty} (\mathbb{T})} ) \Big( |I| + \| \beta_I \|_{L^{\infty} (\mathbb{T})}^{-1}  \int_I    S_{\mathcal{I}} [\beta_I] (\theta) d \theta \Big). 
\end{align*}
To complete the proof of the lemma, observe that by using the Cauchy-Schwarz inequality and the fact that $S_{\mathcal{I}}$ is an isometry on $L^2 (\mathbb{T})$, one has  
\begin{align*}
 \int_I  S_{\mathcal{I}} [\beta_I] ( \theta ) d \theta  \leq (2 \pi |I|)^{1/2} \| S_{\mathcal{I}} [\beta_I] \|_{L^2 (I)} & = (2 \pi |I|)^{1/2} \|  \beta_I \|_{L^2 (I)}  \\
 & \leq (2\pi)^{1/2} |I|  \|  \beta_I \|_{L^{\infty} (\mathbb{T})}, 
\end{align*}
as desired.
\end{proof}

\subsection{Proof of Proposition \ref{at_dec_full_2}}

Let $f $ be a given distribution in $H^{\log}_{\text{at}, \mathcal{I}} (\mathbb{T})$. Without loss of generality, we may assume that $\widehat{f} (0) = 0$. 

By definition, there exists a sequence of multiples of $H^{\log}_{\text{at}, \mathcal{I}}$-atoms $\{ \beta_{I_k } \}_{k \in \mathbb{N}}$ such that
$$ f = \lim_{N \rightarrow \infty} \sum_{k=1}^N \beta_{I_k} \text{  in }  \mathcal{D}' \quad \text{and} \quad \sum_{k=1}^{\infty} |I_k| \Psi_0 ( \| \beta_{I_k} \|_{L^{\infty} (\mathbb{T})} ) < \infty. $$
For $N \in \mathbb{N}$, we set $b_N := \sum_{k=1}^N \beta_{I_k}$. Note that since $b_N \in L^{\infty} (\mathbb{T})$ one has $b_N  = \sum_{I \in \mathcal{I}} \langle b_N, h_I \rangle h_I$ a.e. on $\mathbb{T}$ and in $L^2 (\mathbb{T})$.

In what follows, we shall use several times the fact that
\begin{equation}\label{sublinear}
\Psi_0 \Bigg( \sum_{j=1}^L t_j \Bigg) \leq \sum_{j=1}^L \Psi_0 (t_j)
\end{equation}
for any finite collection of non-negative numbers $\{ t_j \}_{=1}^L$; see the proof of \cite[Lemma 1.1.6 (i)]{YLK}. 

\begin{lemma}\label{proj_conv}
Let $I \in \mathcal{I}$ be given. If $\{ b_N \}_{N \in \mathbb{N}}$ is as above, then the sequence of complex numbers $\{ \langle b_N, h_I \rangle \}_{N \in \mathbb{N}}$ converges.
\end{lemma}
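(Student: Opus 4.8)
The plan is to prove the stronger statement that the series $\sum_{k} \langle \beta_{I_k}, h_I \rangle$ converges \emph{absolutely}; since $b_N = \sum_{k=1}^N \beta_{I_k}$ is a finite sum of $L^\infty$-functions and $h_I \in L^2(\mathbb{T}) \subseteq L^1(\mathbb{T})$, we have $\langle b_N, h_I \rangle = \sum_{k=1}^N \langle \beta_{I_k}, h_I \rangle$, so absolute convergence of the series immediately yields convergence of the sequence $\{ \langle b_N, h_I \rangle \}_{N \in \mathbb{N}}$. I emphasise that I would \emph{not} try to deduce this from the distributional convergence $b_N \to f$: the Haar function $h_I$ is not a legitimate test function, so the estimate \eqref{sup_N} does not apply to it.

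First I would use the dyadic structure of $\mathcal{I}$ to identify which atoms actually contribute. Since any two arcs of $\mathcal{I}$ are either disjoint or nested, there are three cases for a given $k$. If $\mathrm{supp}(\beta_{I_k}) \subseteq I_k$ is disjoint from $I$, then plainly $\langle \beta_{I_k}, h_I \rangle = 0$. If $I_k \subsetneq I$, then $I_k$ is contained entirely in one of the two halves $I_-$, $I_+$ of $I$, on which $h_I$ is constant; combined with the cancellation $\int_{\mathbb{T}} \beta_{I_k}(\theta)\, d\theta = 0$ this again forces $\langle \beta_{I_k}, h_I \rangle = 0$. Hence only indices $k$ with $I \subseteq I_k$ (including $I_k = I$) can contribute, and for these I would use the crude bound $\abs{\langle \beta_{I_k}, h_I \rangle} \leq \norm{\beta_{I_k}}_{L^\infty(\mathbb{T})} \norm{h_I}_{L^1(\mathbb{T})} = \abs{I}^{1/2} \norm{\beta_{I_k}}_{L^\infty(\mathbb{T})}$.

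Next I would convert the $\Psi_0$-summability hypothesis into $\ell^1$-summability of the sup-norms along this subcollection. Every contributing arc satisfies $\abs{I_k} \geq \abs{I}$, so
$$ \sum_{k : I \subseteq I_k} \Psi_0\big( \norm{\beta_{I_k}}_{L^\infty(\mathbb{T})} \big) \leq \abs{I}^{-1} \sum_{k \in \mathbb{N}} \abs{I_k}\, \Psi_0\big( \norm{\beta_{I_k}}_{L^\infty(\mathbb{T})} \big) < \infty. $$
In particular $\Psi_0(\norm{\beta_{I_k}}_{L^\infty}) \to 0$, hence $\norm{\beta_{I_k}}_{L^\infty} \to 0$, so all but finitely many of these atoms satisfy $\norm{\beta_{I_k}}_{L^\infty} \leq 1$. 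On $[0,1]$ one has $\Psi_0(t) = t[\log(e+t)]^{-1} \geq t[\log(e+1)]^{-1}$, so $\norm{\beta_{I_k}}_{L^\infty} \leq \log(e+1)\,\Psi_0(\norm{\beta_{I_k}}_{L^\infty})$ for all but finitely many $k$ with $I \subseteq I_k$. Summing, $\sum_{k : I \subseteq I_k} \norm{\beta_{I_k}}_{L^\infty(\mathbb{T})} < \infty$, and therefore $\sum_{k : I \subseteq I_k} \abs{\langle \beta_{I_k}, h_I \rangle} \leq \abs{I}^{1/2} \sum_{k : I \subseteq I_k} \norm{\beta_{I_k}}_{L^\infty(\mathbb{T})} < \infty$, as needed.

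The main obstacle here is conceptual rather than computational: because $h_I$ is discontinuous it cannot be tested against distributions, so the whole point is to bypass the distributional limit and instead extract convergence directly from the atomic data. The two ingredients that make this work are the dyadic cancellation (killing every atom except the finitely many scales $I_k \supseteq I$, via mean-zero on the halves of $I$) and the observation that near the origin $\Psi_0$ is comparable to the identity, so the Orlicz-type summability degenerates to ordinary $\ell^1$-summability. In writing the details I would be careful to treat the edge case $I_k = I$ (which does lie in the contributing family) and to note that the finitely many atoms with $\norm{\beta_{I_k}}_{L^\infty} > 1$ contribute only a finite total.
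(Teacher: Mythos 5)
Your proof is correct, and it takes a genuinely different route from the paper's. The paper argues through the dyadic square function: combining Lemma \ref{atom_est} with the sublinearity property \eqref{sublinear}, it shows that $\int_{\mathbb{T}} \Psi_0 ( S_{\mathcal{I}} [b_N - b_M] (\theta) ) \, d\theta \rightarrow 0$ as $M, N \rightarrow \infty$, and then uses the pointwise domination $S_{\mathcal{I}} [b_N - b_M] \geq |\langle b_N - b_M , h_I \rangle|\, |I|^{-1/2} \chi_I$, together with the continuity of $\Psi_0$ and the fact that $\Psi_0$ vanishes only at $0$, to conclude that $\{ \langle b_N, h_I \rangle \}_{N}$ is Cauchy. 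You bypass the square function entirely: dyadic nesting plus the mean-zero property of the atoms annihilates $\langle \beta_{I_k}, h_I \rangle$ unless $I \subseteq I_k$, and on that subfamily the constraint $|I_k| \geq |I|$ lets you absorb the weight $|I_k|$, after which the linear behaviour of $\Psi_0$ near the origin (namely $\Psi_0(t) \geq t/\log(e+1)$ for $0 \leq t \leq 1$) upgrades the Orlicz-type summability to genuine $\ell^1$ summability of the sup-norms; this yields absolute convergence of $\sum_k \langle \beta_{I_k}, h_I \rangle$, which is strictly stronger than the Cauchy property the paper derives. The trade-off is this: your argument is more elementary and self-contained (no Lemma \ref{atom_est}, no square function) and identifies exactly which atoms contribute, but it is wedded to the dyadic geometry --- for atoms supported on arbitrary arcs, an arc strictly inside $I$ can straddle the midpoint of $I$, the cancellation step fails, and the argument breaks down; the paper's square-function estimate uses nothing about how the $I_k$ sit relative to $I$, and the Lemma \ref{atom_est} machinery it invokes is needed anyway in the remainder of the proof of Proposition \ref{at_dec_full_2} (for instance in \eqref{S_b_N_uniform} and in the uniform bound on $D(p, \mathbf{b}_N)$), so there it comes essentially for free. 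Finally, your opening observation --- that $h_I$ is not a $C^1$ test function, so convergence of the coefficients cannot be read off from the distributional convergence of $b_N$ via \eqref{sup_N} --- is precisely the obstruction that makes this lemma necessary in the first place, and both proofs are responses to it.
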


\begin{proof} It follows from Lemma \ref{atom_est} and \eqref{sublinear} that for $N>M$ one has
\begin{align*}
  \int_{\mathbb{T}} \Psi_0 (S_{\mathcal{I}} [b_N -b_M] (\theta) ) d \theta & \leq \sum_{k = M+1}^N \int_{\mathbb{T}} \Psi_0 (S_{\mathcal{I}} [ \beta_{I_k}] (\theta) ) d \theta   \\
  & \leq \big( 1+ (2 \pi)^{1/2} \big) \sum_{k = M+1}^N |I_k| \Psi_0 (\| \beta_{I_k} \|_{L^{\infty} (\mathbb{T})} ) 
\end{align*}
and so,
\begin{equation}\label{lim_Cauchy}
 \lim_{M,N \rightarrow \infty}  \int_{\mathbb{T}} \Psi_0 (S_{\mathcal{I}} [b_N - b_M] (\theta)) d \theta = 0 . 
\end{equation}
Observe that, since $\Psi_0$ is increasing, one has
\begin{align*}
\int_{\mathbb{T}} \Psi_0 (S_{\mathcal{I}} [b_N -b_M] (\theta) ) d \theta &\geq \int_{\mathbb{T}} \Psi_0 ( | \langle b_N , h_I \rangle - \langle b_M , h_I \rangle|  |I|^{-1/2} \chi_I ) d \theta \\
& = | I | \Psi_0 ( | \langle b_N , h_I \rangle -  \langle b_M , h_I \rangle|  |I|^{-1/2} )
\end{align*}
for all $I \in \mathcal{I}$. Since $\Psi_0$ is continuous and $\Psi_0 (t) = 0$ if, and only if, $t = 0$, we deduce from the previous inequality and \eqref{lim_Cauchy} that
$$  \lim_{M,N \rightarrow \infty} | \langle b_N , h_I \rangle - \langle b_M , h_I \rangle| = 0 .$$
Hence, $\{ b_N \}_{N \in \mathbb{N}}$ is Cauchy in $\mathbb{C}$ and so, it converges. 
\end{proof}

In view of Lemma \ref{proj_conv}, we may define $\mathbf{b} : = \{ b_I \}_{I \in \mathcal{I}}$ with $b_I := \lim_{N \rightarrow \infty} \langle b_N, h_I \rangle$. 
We claim that $\mathbf{b} \in h^{\log}_{\mathcal{I}} (\mathbb{T})$ with 
\begin{equation}\label{S_b}
\int_{\mathbb{T}} \Psi_0 ( S_{\mathcal{I}} [\mathbf{b}] (\theta) ) d \theta \leq \big( 1+ (2 \pi)^{1/2} \big) \sum_{k=1}^{\infty} | I_k | \Psi_0 ( \| \beta_{I_k} \|_{L^{\infty} (\mathbb{T})}) .
\end{equation}
Indeed, by Lemma \ref{atom_est} and the definition of $\{ b_N \}_{N \in \mathbb{N}}$, one has
\begin{equation}\label{S_b_N_uniform}
 \int_{\mathbb{T}} \Psi_0 ( S_{\mathcal{I}} [ b_N] (\theta) ) d\theta  \leq \big( 1+ (2 \pi)^{1/2} \big) \sum_{k = 1}^{\infty} |I_k| \Psi_0 (\|  \beta_{I_k} \|_{L^{\infty} (\mathbb{T})} ) \quad \text{for all } N \in \mathbb{N}.
\end{equation}
Fix an $M \in \mathbb{N}$ and note that, by combining \eqref{S_b_N_uniform} with Fatou's lemma, one gets
\begin{align*}
& \int_{\mathbb{T}} \liminf_{N \rightarrow \infty} \Psi_0 \Bigg( \Bigg\{ \sum_{\substack{I \in \mathcal{I}:\\ |I| \geq 2^{-M}}} | \langle b_N , h_I \rangle |^2 |I|^{-1} \chi_I(\theta) \Bigg\}^{1/2} \Bigg) d \theta \\
& \leq \liminf_{N \rightarrow \infty} \int_{\mathbb{T}}  \Psi_0 ( S_{\mathcal{I}} [ b_N] (\theta) ) d \theta \\
&  \leq \big(  1 + ( 2 \pi )^{1/2} \big) \sum_{k = 1}^{\infty} |I_k| \Psi_0 (\|  \beta_{I_k} \|_{L^{\infty} (\mathbb{T})} ) . 
\end{align*}
Since $\Psi_0$ is continuous, we deduce that 
\begin{multline}\label{S_b_uniform}
\int_{\mathbb{T}}  \Psi_0 \Bigg( \Bigg\{ \sum_{\substack{I \in \mathcal{I}:\\ |I| \geq 2^{-M}}} |  b_I  |^2 |I|^{-1} \chi_I ( \theta) \Bigg\}^{1/2} \Bigg)  d \theta \leq \\
\big( 1 + ( 2 \pi )^{1/2} \big) \sum_{k = 1}^{\infty} |I_k| \Psi_0 (\|  \beta_{I_k} \|_{L^{\infty} (\mathbb{T})} )  
\end{multline}
for all $M \in \mathbb{N}$. Hence, \eqref{S_b} is obtained by using \eqref{S_b_uniform}, the monotone convergence theorem, and the continuity of $\Psi_0$.

If we now define the sequence of functions $\{ B_M \}_{M \in \mathbb{N}}$ with finite wavelet expansions given by
$$ B_M (\theta) := \sum_{\substack{ I \in \mathcal{I} : \\  |I| \geq 2^{-M}} } b_I h_I (\theta) \quad (\theta \in \mathbb{T}), $$
then, as explained in Section \ref{prelim}, $B_M$ converges in $\mathcal{D}'$ to some $b \in \mathcal{D}'$.  
It thus suffices to prove that $b \equiv f $. To this end, it is enough to show that, in view of the definition of $\{ b_N \}_{N \in \mathbb{N}}$, one has
\begin{equation}\label{final_limit}
 b = \lim_{N \rightarrow \infty} b_N \text{ in } \mathcal{D}' . 
\end{equation}
For $ M , N \in \mathbb{N}$, consider the function $\delta_{M,N}$ given by
$$ \delta_{M,N} (\theta): = B_M (\theta) - (b_N )_M (\theta) \quad ( \theta \in \mathbb{T}), $$
where $(b_N )_M$ is the `truncation' of the Haar series representation of $b_N$ allowing Haar projections corresponding to arcs $ I \in \mathcal{I} $ with $  |I| \geq 2^{-M}$ that is,
$$ (b_N )_M (\theta) : = \sum_{\substack{ I \in \mathcal{I} : \\  |I| \geq 2^{-M}} } \langle b_N , h_I \rangle h_I (\theta)  \quad ( \theta \in \mathbb{T} ) . $$
We claim that for any fixed $\phi \in C^{\infty} (\mathbb{T})$ one has
\begin{equation}\label{lim_d_1}
 \lim_{M \rightarrow \infty}  \langle \delta_{M,N} , \phi \rangle =  \langle b- b_N, \phi \rangle  \quad \text{uniformly in } N \in \mathbb{N} .  
\end{equation}
Indeed, fix a $\phi \in C^{\infty} (\mathbb{T})$ and write
\begin{equation}\label{triangle}
| \langle \delta_{M,N} - (b- b_N), \phi \rangle | \leq | \langle B_M - b , \phi \rangle | + | \langle (b_N)_M - b_N, \phi \rangle | . 
\end{equation}
Let $p \in (1/2,1)$ be fixed. By arguing as in Section \ref{prelim}, one deduces that there exists an absolute constant $c_0 >0$ such that
\begin{equation}\label{triangle_1}
 | \langle B_M - b , \phi \rangle | \leq c_0 \| \phi' \|_{L^{\infty} (\mathbb{T})} D(p, \mathbf{b}) \sum_{k= M+1}^{\infty} 2^{-(2-1/p)k},
\end{equation}
where $D(p, \mathbf{b}) $ is as in Section \ref{prelim} and is finite, in view of \eqref{S_b}. Similarly, one has
\begin{equation}\label{triangle_2}
 | \langle (b_N)_M - b_N , \phi \rangle | \leq c_0 \| \phi' \|_{L^{\infty} (\mathbb{T})} D(p, \mathbf{b}_N) \sum_{k= M+1}^N 2^{-(2-1/p)k},
\end{equation}
where
$$ D(p, \mathbf{b}_N) = (p-1)^{-1/p} \Bigg( \int_{\mathbb{T}} \Psi_0 ( S_{\mathcal{I}} [\mathbf{b}_N] (\theta) ) d\theta \Bigg)^{1/p} . $$
By using Lemma \ref{atom_est} and the definition of $b_N$, one deduces that
\begin{align*}
 D(p, \mathbf{b}_N) & \leq (p-1)^{-1/p} \Bigg(  \big( 1 + ( 2 \pi )^{1/2}  \big) \sum_{k=1}^{N} | I_k | \Psi_0 ( \| \beta_{I_k} \|_{L^{\infty} (\mathbb{T})} ) \Bigg)^{1/p} \\
 & \leq \big( 1 + ( 2 \pi )^{1/2}\big)^{1/p} (p-1)^{-1/p} \Bigg(  \sum_{k=1}^{\infty} | I_k | \Psi_0 ( \| \beta_{I_k} \|_{L^{\infty} (\mathbb{T})} ) \Bigg)^{1/p}  
\end{align*}
that is, $D(p, \mathbf{b}_N) $ is bounded by a finite constant that is independent of $N \in \mathbb{N}$. Hence, \eqref{triangle_2} implies that
\begin{multline}\label{triangle_2_b}
| \langle (b_N)_M - b_N , \phi \rangle | \leq \\
c_p \| \phi' \|_{L^{\infty} (\mathbb{T})}   \Bigg(  \sum_{k=1}^{\infty} | I_k | \Psi_0 ( \| \beta_{I_k} \|_{L^{\infty} (\mathbb{T})} ) \Bigg)^{1/p} \sum_{k= M+1}^{\infty} 2^{-(2-1/p)k},
\end{multline}
where $c_p>0$ is a constant depending only on $p$. Therefore, by combining \eqref{triangle_1} with \eqref{triangle_2_b}, we deduce that \eqref{lim_d_1} holds. 

Moreover, by arguing again as in Section \ref{prelim}, one shows that for any fixed $p \in (1/2, 1)$ there exists a constant $c'_p > 0$, depending only on $p$,  such that
\begin{multline}\label{d_unif}
 |\langle \delta_{M, N} , \phi \rangle| \leq \\
  c'_p \| \phi' \|_{L^{\infty} (\mathbb{T})} \Bigg[ \int_{\mathbb{T}}   \Psi_0  \Bigg( \Bigg\{ \sum_{\substack{I \in \mathcal{I}:\\ |I| \geq 2^{-M}}} |  b_I  - \langle b_N, h_I \rangle |^2 |I|^{-1} \chi_I ( \theta ) \Bigg\}^{1/2} \Bigg) d \theta \Bigg]^{1/p}   
\end{multline}
for all $M, N \in \mathbb{N} $ and $ \phi \in C^{\infty} (\mathbb{T})$. We shall prove that the right-hand side of \eqref{d_unif} tends to $0 $ as $N \rightarrow \infty$ for all $M \in \mathbb{N}$ and $ \phi \in C^{\infty} (\mathbb{T})$. To this end, note that for any $L \in \mathbb{N}$ and for every collection $\{ t_l \}_{l=1}^L$ of non-negative numbers, one has
\begin{equation}\label{upper_type}
\Psi_0 \Bigg[ \Big( \sum_{j=1}^L t_j \Big)^{1/2}\Bigg] \leq   \sum_{j=1}^L \Psi_0 ( t_j^{1/2} ). 
\end{equation}
Indeed, \eqref{upper_type} is obtained by combining \eqref{sublinear} with
$$ \Big( \sum_{j=1}^L t_j \Big)^{1/2}  \leq \sum_{ j =1}^L t^{1/2}_j . $$
Observe that by using \eqref{upper_type} one has
\begin{align*} 
& \int_{\mathbb{T}} \Psi_0 \Bigg( \Bigg\{ \sum_{\substack{I \in \mathcal{I}:\\ |I| \geq 2^{-M}}} |  b_I  - \langle b_N, h_I \rangle |^2 |I|^{-1} \chi_I (\theta) \Bigg\}^{1/2} \Bigg) d \theta \leq \\
& \sum_{\substack{I \in \mathcal{I}:\\ |I| \geq 2^{-M}}} \int_{\mathbb{T}}   \Psi_0  \big(   |  b_I  - \langle b_N, h_I \rangle |^2 |I|^{-1} \chi_I (\theta) \big) d \theta = \sum_{\substack{I \in \mathcal{I}:\\ |I| \geq 2^{-M}}} |I| \Psi_0 \big( |  b_I  - \langle b_N, h_I \rangle |  |I|^{-1/2} \big)  
\end{align*}
and hence, \eqref{d_unif} implies that
\begin{equation}\label{d_bound}
|\langle \delta_{M, N} , \phi \rangle| \leq  
 c'_p \| \phi' \|_{L^{\infty} (\mathbb{T})} \Bigg( \sum_{\substack{I \in \mathcal{I}:\\ |I| \geq 2^{-M}}} |I| \Psi_0 \big( | b_I - \langle b_N, h_I \rangle | |I|^{-1/2} \big) \Bigg)^{1/p} .
\end{equation}
Since the sum on the right-hand side of \eqref{d_bound} is finite and $\Phi$ is continuous, it follows from the definition of $\mathbf{b}$ that
\begin{equation}\label{lim_d_2}
 \lim_{N \rightarrow \infty} \langle \delta_{M,N} , \phi \rangle = 0 \quad \text{for all } M \in \mathbb{N} .  
\end{equation}
Therefore, by combining \eqref{lim_d_1} and \eqref{lim_d_2}, it follows that 
$$ \lim_{N \rightarrow \infty} \langle b - b_N, \phi \rangle = \lim_{N \rightarrow \infty} \lim_{M \rightarrow \infty} \langle \delta_{M,N} , \phi \rangle = \lim_{M \rightarrow \infty} \lim_{N \rightarrow \infty} \langle \delta_{M,N} , \phi \rangle = 0 $$
for all $\phi \in C^{\infty} (\mathbb{T})$. Hence, \eqref{final_limit} holds and so, the proof of Proposition \ref{at_dec_full_2} is complete.

\subsection{Concluding remarks}
By combining Propositions \ref{at_dec_full_1} and \ref{at_dec_full_2} one obtains the following theorem.

\begin{theo}\label{at_dec_full}
One has $H^{\log}_{ \mathrm{at} , \mathcal{I} } (\mathbb{T}) \cong H^{\log}_{  \mathcal{I}} (\mathbb{T})$. 
\end{theo}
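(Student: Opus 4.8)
The plan is to recognise that the theorem is essentially a bookkeeping step that combines the two inclusions already proved, together with a careful reading of the quasi-norm constants buried in their proofs. As sets, the identity $H^{\log}_{\mathrm{at},\mathcal{I}}(\mathbb{T}) = H^{\log}_{\mathcal{I}}(\mathbb{T})$ is immediate from Propositions \ref{at_dec_full_1} and \ref{at_dec_full_2}. Since the symbol $\cong$ encodes an isomorphism of quasi-normed spaces, the actual content of the statement is the two-sided estimate
$$ \|f\|_{H^{\log}_{\mathrm{at},\mathcal{I}}(\mathbb{T})} \sim \|f\|_{H^{\log}_{\mathcal{I}}(\mathbb{T})}, $$
and both halves are already latent in the preceding arguments.

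First I would record the inequality $\|f\|_{H^{\log}_{\mathrm{at},\mathcal{I}}(\mathbb{T})} \leq C_0 \|f\|_{H^{\log}_{\mathcal{I}}(\mathbb{T})}$. For $f \in F_{\mathcal{I}}(\mathbb{T})$ this is exactly the content of Proposition \ref{at_dec}, which produces a finite atomic decomposition with $\Lambda_{\infty}(f, \{\beta_{I_k}\}) \leq C_0 \|f\|_{H^{\log}_{\mathcal{I}}(\mathbb{T})}$; taking the infimum over decompositions in the definition of the atomic quasi-norm gives the claim on the dense subspace $F_{\mathcal{I}}(\mathbb{T})$. The passage to a general $f \in H^{\log}_{\mathcal{I}}(\mathbb{T})$ is then handled by the density-and-completeness argument already invoked in the proof of Proposition \ref{at_dec_full_1} (following p.~109 of \cite{Big_Stein}).

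For the reverse inequality I would exploit the homogeneity of the square function together with inequality \eqref{S_b}. Assuming $\widehat{f}(0) = 0$ and given any admissible decomposition $f = \sum_k \beta_{I_k}$, fix $\lambda > 0$ with $\sum_k |I_k| \Psi_0(\lambda^{-1}\|\beta_{I_k}\|_{L^{\infty}(\mathbb{T})}) \leq 1$. Applying \eqref{S_b} to the rescaled multiples $\{\lambda^{-1}\beta_{I_k}\}$ and using $S_{\mathcal{I}}[\lambda^{-1}\mathbf{b}] = \lambda^{-1} S_{\mathcal{I}}[\mathbf{b}]$ yields
$$ (2\pi)^{-1}\int_{\mathbb{T}} \Psi_0\big(\lambda^{-1} S_{\mathcal{I}}[\mathbf{b}](\theta)\big)\,d\theta \leq (2\pi)^{-1}\big(1 + (2\pi)^{1/2}\big) < 1, $$
whence $\|f\|_{H^{\log}_{\mathcal{I}}(\mathbb{T})} \leq \lambda$ by the Luxemburg-type definition of the quasi-norm. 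Taking the infimum over such $\lambda$ and over admissible decompositions gives $\|f\|_{H^{\log}_{\mathcal{I}}(\mathbb{T})} \leq \|f\|_{H^{\log}_{\mathrm{at},\mathcal{I}}(\mathbb{T})}$; the general case is recovered by carrying along the $\Psi_0(\lambda^{-1}|\widehat{f}(0)|)$ term, which is controlled the same way.

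The only genuine subtlety, and the step I would be most careful about, is that $\Psi_0$ is not convex, so the Luxemburg functional is merely a quasi-norm and one cannot invoke standard normed-space machinery for the density extension in the first inequality. The scaling argument in the second inequality sidesteps this by estimating the modular $\int \Psi_0(\lambda^{-1} S_{\mathcal{I}}[\mathbf{b}])$ directly rather than manipulating the quasi-norm, and the numerical slack $1 + (2\pi)^{1/2} < 2\pi$ is precisely what makes the resulting modular bound strictly below $1$. Once both inequalities are in hand, the identity map is a quasi-isometric isomorphism, establishing $H^{\log}_{\mathrm{at},\mathcal{I}}(\mathbb{T}) \cong H^{\log}_{\mathcal{I}}(\mathbb{T})$.
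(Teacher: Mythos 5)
Your proposal is correct and follows the paper's own route: the paper proves this theorem in one line by combining Propositions \ref{at_dec_full_1} and \ref{at_dec_full_2}, exactly as you do. Your extra bookkeeping---extracting the quantitative bound of Proposition \ref{at_dec} plus density for one direction, and rescaling \eqref{S_b} by $\lambda^{-1}$ (with the numerical observation $(2\pi)^{-1}(1+(2\pi)^{1/2})<1$) for the other---is a correct and welcome fleshing-out of the quasi-norm equivalence that the paper leaves implicit in the symbol $\cong$.
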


By using \cite[Proposition 2.1]{Mei} and the atomic decomposition of $H^{\log} (\mathbb{T})$; see  \eqref{equivalence_atoms}, one shows that 
 $H^{\log}_{\text{at}, \mathcal{I}^0} (\mathbb{T}) + H^{\log}_{\text{at}, \mathcal{I}^{1/3}} (\mathbb{T}) \cong H^{\log} (\mathbb{T})$. We thus deduce from  Theorem \ref{at_dec_full}  the following variant of T. Mei's theorem \cite{Mei} for $H^{\log} (\mathbb{T})$. 
 
\begin{theo}\label{two_translates}
One has
$$ H^{\log}_{ \mathcal{I}^0 } (\mathbb{T}) + H^{\log}_{\mathcal{I}^{1/3}} (\mathbb{T}) \cong H^{\log} (\mathbb{T}). $$
\end{theo}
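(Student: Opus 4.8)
The plan is to establish first the atomic identity
$$ H^{\log}_{\mathrm{at}, \mathcal{I}^0} (\mathbb{T}) + H^{\log}_{\mathrm{at}, \mathcal{I}^{1/3}} (\mathbb{T}) \cong H^{\log} (\mathbb{T}) $$
and then to transfer it to the square-function spaces: applying Theorem \ref{at_dec_full} with $\delta = 0$ and with $\delta = 1/3$ gives $H^{\log}_{\mathrm{at}, \mathcal{I}^\delta}(\mathbb{T}) \cong H^{\log}_{\mathcal{I}^\delta}(\mathbb{T})$ in each case, so substituting these equivalences into the displayed identity yields the asserted one. Thus all of the content lies in the atomic statement, for which the inputs are the atomic characterisation \eqref{equivalence_atoms} of $H^{\log}(\mathbb{T})$ and the periodic one-third trick of \cite[Proposition 2.1]{Mei}.

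The inclusion $H^{\log}_{\mathrm{at}, \mathcal{I}^0} + H^{\log}_{\mathrm{at}, \mathcal{I}^{1/3}} \subseteq H^{\log}$ is immediate: every $H^{\log}_{\mathcal{I}^\delta}$-atom is in particular an $H^{\log}(\mathbb{T})$-atom in the sense of Definition \ref{H^log_atoms}, since the three defining conditions (support in an arc, vanishing integral, and the normalisation by $\|\chi_I\|_{L^{\log}(\mathbb{T})}$) are identical, the only difference being that $I$ is now required to be dyadic. Hence any atomic sum over either grid is also an atomic sum in $H^{\log}_{\mathrm{at}}(\mathbb{T})$ with the same functional $\Lambda_\infty$, and \eqref{equivalence_atoms} provides the norm bound. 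For the reverse inclusion I would start from $f \in H^{\log}(\mathbb{T}) \cong H^{\log}_{\mathrm{at}}(\mathbb{T})$ and fix an atomic decomposition $f - \widehat{f}(0) = \sum_k b_{I_k}$, with the arcs $I_k$ arbitrary, for which $\Lambda_\infty(f, \{b_{I_k}\})$ is comparable to $\|f\|_{H^{\log}_{\mathrm{at}}(\mathbb{T})}$. By \cite[Proposition 2.1]{Mei}, each $I_k$ sits inside a dyadic arc $J_k \in \mathcal{I}^0 \cup \mathcal{I}^{1/3}$ with $|J_k| \leq C|I_k|$ for an absolute constant $C$. Since $b_{I_k}$ is supported in $I_k \subseteq J_k$, has mean zero, and is bounded, a suitable renormalisation exhibits it as a scalar multiple of an $H^{\log}_{\mathcal{I}^\delta}$-atom attached to $J_k$, where $\mathcal{I}^\delta$ is the grid containing $J_k$; crucially, this renormalisation leaves $\|b_{I_k}\|_{L^{\infty}(\mathbb{T})}$ unchanged, so no comparison of the quantities $\|\chi_{I_k}\|_{L^{\log}}$ and $\|\chi_{J_k}\|_{L^{\log}}$ is needed. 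Partitioning the index set according to whether $J_k$ lies in $\mathcal{I}^0$ or in $\mathcal{I}^{1/3}$, and absorbing the constant $\widehat{f}(0)$ into the first piece, produces a decomposition $f = g_0 + g_{1/3}$ with each $g_\delta \in H^{\log}_{\mathrm{at}, \mathcal{I}^\delta}(\mathbb{T})$.

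The only nonroutine points, and therefore the main obstacle, concern the two inputs and the uniformity of constants. On the geometric side one must use the periodic form of the one-third trick, i.e. verify that the two shifts $\delta = 0$ and $\delta = 1/3$ on $\mathbb{T} \cong \mathbb{R}/2\pi\mathbb{Z}$ genuinely suffice to dominate every arc by a comparable dyadic arc; this is exactly \cite[Proposition 2.1]{Mei}. On the analytic side one must check that the set-theoretic splitting upgrades to a quasi-norm bound: since $\|b_{I_k}\|_{L^\infty(\mathbb{T})}$ is preserved and $|J_k| \leq C|I_k|$, one has $\sum_{J_k \in \mathcal{I}^\delta} |J_k|\,\Psi_0(\lambda^{-1}\|b_{I_k}\|_{L^\infty}) \leq C \sum_k |I_k|\,\Psi_0(\lambda^{-1}\|b_{I_k}\|_{L^\infty})$, and combining this with the doubling property \eqref{eq:doubling} of $\Psi_0$ one controls $\Lambda_\infty(g_\delta, \{b_{I_k}\})$ by a fixed multiple of $\|f\|_{H^{\log}_{\mathrm{at}}(\mathbb{T})}$. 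Tracking these constants so that both inclusions hold with quasi-norms comparable up to an absolute factor—rather than merely as an equality of sets—is the crux; everything else follows from the atomic machinery already developed in Section \ref{d_atom}.
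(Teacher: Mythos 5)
Your proposal is correct and follows essentially the same route as the paper: the paper likewise obtains Theorem \ref{two_translates} by first combining the one-third trick of \cite[Proposition 2.1]{Mei} with the atomic characterisation \eqref{equivalence_atoms} to get $H^{\log}_{\mathrm{at}, \mathcal{I}^0} (\mathbb{T}) + H^{\log}_{\mathrm{at}, \mathcal{I}^{1/3}} (\mathbb{T}) \cong H^{\log} (\mathbb{T})$, and then transfers this to the square-function spaces via Theorem \ref{at_dec_full}. Your write-up merely fills in the details (relabelling atoms to comparable dyadic arcs, preservation of $\| b_{I_k} \|_{L^{\infty} (\mathbb{T})}$ under renormalisation, and the quasi-norm bookkeeping) that the paper leaves as a two-line sketch.
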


\begin{rmk} Let $\mathcal{I}$ be a given system of dyadic arcs in $\mathbb{T}$. For $p \in (0, \infty)$, define the dyadic Hardy space $h^p_{\mathcal{I}} (\mathbb{T})$ as the class of all collections of complex numbers  
$ \mathbf{f}  = \{ f_I \}_{I \in \mathcal{I}} \cup \{ f_0\} $ such that $S_{\mathcal{I}} [ \mathbf{f} ] \in L^p (\mathbb{T})$. 

By arguing as in Sections \ref{intro_d} and \ref{d_atom}, one can show that $h^p_{\mathcal{I}} (\mathbb{T})$ can be identified with a dyadic $H^p$ space $H^p_{\mathcal{I}} (\mathbb{T})$ of distributions on $\mathbb{T}$ and moreover, the  following extension of T. Mei's theorem  \cite{Mei} holds
$$  H^p_{\mathcal{I}^0} (\mathbb{T}) + H^p_{\mathcal{I}^{1/3}} (\mathbb{T}) \cong H^p  (\mathbb{T}) \quad \text{for all } p \in (1/2, 1] . $$
We remark that dyadic Hardy spaces for $p<1$ have also been considered in \cite{W} and \cite{MP}, but the definitions there are different than ours. 
\end{rmk}


\section{Proof of the Bonami-Grellier-Ky theorem in the periodic setting}\label{d_BGK}

For a function $b \in L^1 (\mathbb{T})$, we set
$$ \| b \|_{BMO^+_{\mathcal{I}} (\mathbb{T})} : = \Big( \sup_{I \in \mathcal{I}} \frac{1}{|I|^2} \int_I | b(\theta) - \langle b \rangle_I|^2 d \theta \Big)^{1/2} + \Big| \int_{\mathbb{T}} b (\theta ) d \theta \Big| . $$
We then define $BMO^+_{\mathcal{I}} (\mathbb{T})$ as the class of all functions $b \in L^1 (\mathbb{T})$ such that $\| b \|_{BMO^+_{\mathcal{I}} (\mathbb{T})} < \infty  $. One defines  $BMO^+ (\mathbb{T})$ similarly.

Recall the following standard consequence of the John-Nirenberg type result in the dyadic case.
\begin{lemma}\label{J-N}
There exists an absolute constant $C_0 > 0$ such that for every function $b \in BMO^+_{\mathcal{I}} (\mathbb{T})$ one has
$$ \| b \|_{\exp L (\mathbb{T})} \leq C_0 \| b \|_{BMO^+_{\mathcal{I}} (\mathbb{T})}, $$
where $\| b \|_{\exp L (\mathbb{T})} : = \inf \{ \lambda > 0 : \int_{\mathbb{T}} \psi ( |b(x)| /\lambda ) dx \leq 1\}  $ and $\psi (t):= e^t -  t -1$, $t \geq 0$.
\end{lemma}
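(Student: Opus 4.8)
The plan is to deduce the estimate from the classical dyadic John--Nirenberg inequality, the passage to exponential integrability being essentially bookkeeping. First I would record two preliminary reductions. Since $\abs{I}\leq 2\pi$ for every $I\in\mathcal{I}$, the first term of $\norm{b}_{BMO^+_{\mathcal{I}}(\T)}$ dominates, up to the absolute factor $\sqrt{2\pi}$, the standard dyadic seminorm $\norm{b}_{BMO_{\mathcal{I}}}:=(\sup_{I\in\mathcal{I}}\abs{I}^{-1}\int_I\abs{b-\langle b\rangle_I}^2\,d\theta)^{1/2}$; indeed $\abs{I}^{-1}\int_I\abs{b-\langle b\rangle_I}^2\leq 2\pi\,\abs{I}^{-2}\int_I\abs{b-\langle b\rangle_I}^2$. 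Moreover, the whole circle $\T$ arises as the arc of $\mathcal{I}$ corresponding to $k=m=0$, so $\langle b\rangle_{\T}=(2\pi)^{-1}\int_{\T}b$ is well defined and John--Nirenberg is available on $J=\T$.

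Next I would invoke the dyadic John--Nirenberg inequality: there are absolute constants $c_1,c_2>0$ such that, for every $J\in\mathcal{I}$ and every $\lambda>0$,
\[ \abs{\set{\theta\in J:\abs{b(\theta)-\langle b\rangle_J}>\lambda}}\leq c_1\,e^{-c_2\lambda/\norm{b}_{BMO_{\mathcal{I}}}}\abs{J}. \]
(This is standard and follows, as usual, from a Calder\'on--Zygmund stopping-time decomposition yielding a geometric recursion for the normalised distribution function; since the $L^2$- and $L^1$-based seminorms are equivalent, it is harmless to use $\norm{b}_{BMO_{\mathcal{I}}}$ in the exponent.) Writing $g:=b-\langle b\rangle_{\T}$ and specialising to $J=\T$, the layer-cake formula together with $\psi'(s)=e^s-1$ gives, for any $\mu>0$,
\[ \int_{\T}\psi\brkt{\abs{g}/\mu}\,d\theta=\int_0^\infty (e^s-1)\,\abs{\set{\theta\in\T:\abs{g(\theta)}>\mu s}}\,ds\leq 2\pi c_1\int_0^\infty (e^s-1)e^{-\beta s}\,ds, \]
where $\beta:=c_2\mu/\norm{b}_{BMO_{\mathcal{I}}}$. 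For $\beta>1$ the last integral equals $[\beta(\beta-1)]^{-1}$, which falls below $(2\pi c_1)^{-1}$ as soon as $\beta\geq\beta_0$ for a suitable absolute $\beta_0>1$. Choosing $\mu=\beta_0 c_2^{-1}\norm{b}_{BMO_{\mathcal{I}}}$ therefore forces $\int_{\T}\psi(\abs{g}/\mu)\,d\theta\leq 1$, i.e. $\norm{g}_{\exp L(\T)}\leq \beta_0 c_2^{-1}\norm{b}_{BMO_{\mathcal{I}}}\lesssim \norm{b}_{BMO^+_{\mathcal{I}}(\T)}$.

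It remains to reinstate the mean. The constant $c_0:=\langle b\rangle_{\T}$ satisfies $\norm{c_0}_{\exp L(\T)}=\abs{c_0}/\psi^{-1}((2\pi)^{-1})$, an absolute multiple of $\abs{c_0}=(2\pi)^{-1}\abs{\int_{\T}b}\leq \norm{b}_{BMO^+_{\mathcal{I}}(\T)}$. Since $\psi$ is a Young function (convex, nonnegative, and vanishing only at $0$), the Luxemburg gauge $\norm{\cdot}_{\exp L(\T)}$ is a genuine norm, so the triangle inequality yields
\[ \norm{b}_{\exp L(\T)}=\norm{g+c_0}_{\exp L(\T)}\leq \norm{g}_{\exp L(\T)}+\norm{c_0}_{\exp L(\T)}\lesssim \norm{b}_{BMO^+_{\mathcal{I}}(\T)}, \]
which is the claim. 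The only genuine input is the dyadic John--Nirenberg inequality itself; the remaining steps---relating the two normalisations, the layer-cake computation, and the treatment of the constant term---are elementary, so I expect no serious obstacle beyond correctly tracking the absolute constants.
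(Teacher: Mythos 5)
Your proof is correct, and it is exactly the argument the paper has in mind: the paper states this lemma without proof, citing it as a standard consequence of the dyadic John--Nirenberg inequality, which is precisely what you carry out (John--Nirenberg on the arc $J=\mathbb{T}$, the layer-cake computation for $\psi(t)=e^t-t-1$, and the separate treatment of the mean via the second term of the $BMO^+_{\mathcal{I}}$ norm). Your bookkeeping of the paper's $|I|^{-2}$ normalisation and of the constant term is also handled correctly.
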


The following variant of \cite[Proposition 2.1]{BGK} is obtained by combining Lemma \ref{J-N} with \cite[Lemma 2.1]{BGK}.

\begin{proposition}\label{rough_bound}
For all functions $f$, $b$ such that $f \in L^1 (\mathbb{T}) $ and $b \in BMO^+_{\mathcal{I}} (\mathbb{T})$, one has
$$ \| f \cdot b \|_{L^{\log} (\mathbb{T})} \lesssim \| f \|_{L^1 (\mathbb{T})} \| b \|_{BMO^+_{\mathcal{I}} (\mathbb{T})} . $$
\end{proposition}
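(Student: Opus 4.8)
The plan is to split the estimate into two independent pieces: an application of the dyadic John--Nirenberg inequality (Lemma~\ref{J-N}) to pass from $BMO^+_{\mathcal{I}}$ control of $b$ to exponential integrability, followed by a generalised Hölder inequality in the scale of Orlicz spaces that matches $L^1$ and $\exp L$ against $L^{\log}$. Here I read $L^{\log}(\mathbb{T})$ as the Orlicz space attached to $\Psi_0(t) = t[\log(e+t)]^{-1}$, equipped with its Luxemburg norm, exactly as in the definition of $\|\chi_I\|_{L^{\log}}$ used above. Since Lemma~\ref{J-N} already gives $\|b\|_{\exp L(\mathbb{T})} \lesssim \|b\|_{BMO^+_{\mathcal{I}}(\mathbb{T})}$, it suffices to prove the product bound
\[ \|f b\|_{L^{\log}(\mathbb{T})} \lesssim \|f\|_{L^1(\mathbb{T})}\,\|b\|_{\exp L(\mathbb{T})}, \]
and then compose the two inequalities.

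To establish the product bound I would invoke the generalised Hölder inequality for Orlicz spaces (O'Neil's inequality), which is precisely the content of \cite[Lemma~2.1]{BGK}: if $A$, $B$, $C$ are the relevant Young functions and their generalised inverses satisfy $A^{-1}(t)\,B^{-1}(t) \lesssim C^{-1}(t)$ for large $t$, then $\|fg\|_C \lesssim \|f\|_A\,\|g\|_B$. The only thing to verify is the matching of inverses, where $A$ corresponds to $L^1$, the function $\psi$ to $\exp L$, and $\Psi_0$ to $L^{\log}$. With $A(t)=t$ one has $A^{-1}(t)=t$; with $\psi(t)=e^t-t-1$ one has $\psi^{-1}(t)\sim \log(e+t)$; and solving $s[\log(e+s)]^{-1}=t$ gives $\Psi_0^{-1}(t)\sim t\log(e+t)$ as $t\to\infty$. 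Hence $A^{-1}(t)\,\psi^{-1}(t)\sim t\log(e+t)\sim \Psi_0^{-1}(t)$, which is exactly the hypothesis needed to conclude $\|fb\|_{L^{\log}}\lesssim \|f\|_{L^1}\,\|b\|_{\exp L}$.

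The main subtlety, and the step I expect to require the most care, is that $\Psi_0$ is \emph{not} a genuine Young function: it is eventually concave (its derivative decays like $[\log(e+t)]^{-1}$), so $L^{\log}$ sits just above $L^1$ rather than below it. One therefore has to apply O'Neil's theorem in a form valid for such sub-linear Orlicz functions, or else avoid it altogether and argue directly: normalise so that $\|f\|_{L^1}=\|b\|_{\exp L}=1$, and use the submultiplicativity $\log(e+st)\le \log(e+s)+\log(e+t)$ together with the doubling property \eqref{eq:doubling} of $\Psi_0$ to bound $\int_{\mathbb{T}}\Psi_0(|fb|)\,d\theta$ by a combination of $\int_{\mathbb{T}} |f|\,d\theta$ and $\int_{\mathbb{T}} \psi(|b|)\,d\theta$, both of which are $O(1)$ under the normalisation. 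Getting the normalisation consistent is the one genuinely delicate bookkeeping point, since $\Psi_0$ is only quasi-homogeneous; everything else is routine.
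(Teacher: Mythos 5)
Your proposal follows exactly the paper's route: the paper obtains Proposition \ref{rough_bound} precisely by combining Lemma \ref{J-N} with \cite[Lemma 2.1]{BGK}, which is the generalised H\"older bound $\| f b \|_{L^{\log} (\mathbb{T})} \lesssim \| f \|_{L^1 (\mathbb{T})} \| b \|_{\exp L (\mathbb{T})}$ that you state. Your verification of the matching of Young-function inverses and your caveat about $\Psi_0$ failing to be convex are sensible supplementary details (the latter is handled inside the proof of \cite[Lemma 2.1]{BGK} itself), but they do not constitute a departure from the paper's argument.
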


In this section, we present the following dyadic version of \cite[Theorem 1.1]{BGK}.

\begin{theo}\label{1-d_periodic} There exist two bilinear operators $S$ and $T$ on the product space $H^1_{\mathcal{I}} (\mathbb{T}) \times BMO^+_{\mathcal{I}} (\mathbb{T})$ such that 
$$ f \cdot b = S_{\mathcal{I}} (f,b) + T_{\mathcal{I}} (f,b) \quad \mathrm{in } \ \mathcal{D}' $$
with $S_{\mathcal{I}} : H^1_{\mathcal{I}} (\mathbb{T}) \times BMO_{\mathcal{I}}^+ (\mathbb{T}) \rightarrow L^1 (\mathbb{T})$ and $T_{\mathcal{I}} : H^1_{\mathcal{I}} (\mathbb{T}) \times BMO^+_{\mathcal{I}} (\mathbb{T}) \rightarrow H^{\log}_{\mathcal{I}} (\mathbb{T})$.
\end{theo}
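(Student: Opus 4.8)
The plan is to decompose the product $f\cdot b$ by means of dyadic paraproducts, which are manifestly bilinear, and then to identify one piece that automatically lies in $L^1(\mathbb{T})$ and a second piece that lands in $H^{\log}_{\mathcal I}(\mathbb{T})$ precisely because of a logarithmic gain. Since $F_{\mathcal I}(\mathbb{T})$ is dense in $H^1_{\mathcal I}(\mathbb{T})$, I would first define the operators on finite Haar expansions and extend by continuity. For $f,b$ with finite Haar series, writing $\langle g\rangle_I$ for the average of $g$ over $I$ and $c_I=\langle b,h_I\rangle$, the elementary Haar product formula gives
$$ f\cdot b = \Pi_b(f) + \Pi_f(b) + \Lambda(f,b), $$
where $\Pi_\beta(g):=\sum_{I\in\mathcal I}\langle g\rangle_I\langle\beta,h_I\rangle h_I$ is the paraproduct with symbol $\beta$ and $\Lambda(g,\beta):=\sum_{I\in\mathcal I}\langle g,h_I\rangle\langle\beta,h_I\rangle\,|I|^{-1}\chi_I$ is the diagonal (resonance) term. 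I would set $S_{\mathcal I}(f,b):=\Pi_b(f)$ and $T_{\mathcal I}(f,b):=\Pi_f(b)+\Lambda(f,b)$, so that $S_{\mathcal I}+T_{\mathcal I}=f\cdot b$ by construction and both operators are bilinear.

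The bound $\| S_{\mathcal I}(f,b)\|_{L^1(\mathbb{T})}\lesssim\| f\|_{H^1_{\mathcal I}(\mathbb{T})}\| b\|_{BMO^+_{\mathcal I}(\mathbb{T})}$ is the standard mapping property of the `good' dyadic paraproduct $\Pi_b$, in which the $BMO$ symbol $b$ carries the oscillation while $f$ enters only through its averages. I would prove it by testing on a single $H^1_{\mathcal I}$-atom $a$ supported on an arc $I_0$: since $a$ has mean zero, $\langle a\rangle_I$ vanishes unless $I\subseteq I_0$, so $\Pi_b(a)$ is supported in $I_0$, and its square function is bounded by $\| a\|_{L^\infty}$ times the square function of the Haar projection of $b$ onto the subarcs of $I_0$. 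The Carleson/John--Nirenberg estimate of Lemma \ref{J-N} then yields $\|\Pi_b(a)\|_{H^1_{\mathcal I}(\mathbb{T})}\lesssim\| b\|_{BMO^+_{\mathcal I}(\mathbb{T})}$, and summing over an atomic decomposition of $f$, together with the embedding $H^1_{\mathcal I}(\mathbb{T})\hookrightarrow L^1(\mathbb{T})$, gives the claim.

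The heart of the matter is the estimate $\| T_{\mathcal I}(f,b)\|_{H^{\log}_{\mathcal I}(\mathbb{T})}\lesssim\| f\|_{H^1_{\mathcal I}(\mathbb{T})}\| b\|_{BMO^+_{\mathcal I}(\mathbb{T})}$, and here I would exploit the atomic characterisation $H^{\log}_{\mathrm{at},\mathcal I}(\mathbb{T})\cong H^{\log}_{\mathcal I}(\mathbb{T})$ of Theorem \ref{at_dec_full}. Fixing an atomic decomposition $f=\sum_k\lambda_k a_k$ in $H^1_{\mathcal I}(\mathbb{T})$, with atoms $a_k$ on arcs $I_k$ and $\sum_k|\lambda_k|\lesssim\| f\|_{H^1_{\mathcal I}(\mathbb{T})}$, bilinearity together with the distributional continuity bounds of Section \ref{prelim} reduces matters to a single atom. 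For an $H^1_{\mathcal I}$-atom $a$ on $I_0$ one has $T_{\mathcal I}(a,b)=a\cdot b-\Pi_b(a)$, which is supported in $I_0$ and, using the mean-zero property of $a$, splits as $\langle b\rangle_{I_0}\,a$ plus terms built from $a\,(b-\langle b\rangle_{I_0})$ and $\Pi_b(a)$. The dominant piece $\langle b\rangle_{I_0}\,a$ is a scalar multiple of an $H^{\log}_{\mathcal I}$-atom of size $\sim|\langle b\rangle_{I_0}|\,|I_0|^{-1}$, while the merely exponentially integrable factor $b-\langle b\rangle_{I_0}$ I would subdivide at its dyadic level sets (a Calder\'on--Zygmund stopping-time argument inside $I_0$) into $H^{\log}_{\mathcal I}$-atoms at the subscales of $I_0$, the exponential integrability of Lemma \ref{J-N} guaranteeing summability. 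The decisive point is a logarithmic cancellation: since $|\langle b\rangle_I-\langle b\rangle_{\mathbb{T}}|\lesssim\| b\|_{BMO^+_{\mathcal I}(\mathbb{T})}\log(2/|I|)$, the argument of $\Psi_0$ has size $\sim|\lambda_k|\,|\langle b\rangle_{I_k}|\,|I_k|^{-1}$, whose logarithm is $\sim\log(1/|I_k|)$, exactly the factor appearing in the denominator of $\Psi_0$. Hence each term contributes $|I_k|\,\Psi_0(\,\cdot\,)\sim|\lambda_k|$, so that after normalising by $\lambda\sim\| f\|_{H^1_{\mathcal I}(\mathbb{T})}\| b\|_{BMO^+_{\mathcal I}(\mathbb{T})}$ one obtains $\sum_k|I_k|\,\Psi_0(\lambda^{-1}\| T_{\mathcal I}(a_k,b)\|_{L^\infty})\lesssim\sum_k|\lambda_k|\lesssim\| f\|_{H^1_{\mathcal I}(\mathbb{T})}$; Theorem \ref{at_dec_full} then places $T_{\mathcal I}(f,b)$ in $H^{\log}_{\mathcal I}(\mathbb{T})$ with the required norm control.

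The main obstacle is precisely this last step: the part of $a\,(b-\langle b\rangle_{I_0})$ that fails to be bounded (being only exponentially integrable) forces the stopping-time subdivision into countably many $H^{\log}_{\mathcal I}$-atoms, and one must verify that the logarithmic growth of the $BMO$-averages $\langle b\rangle_I$ is matched, term by term, by the logarithmic gain built into $\Psi_0$, so that the resulting series $\sum_k|I_k|\,\Psi_0(\cdots)$ converges with the correct bound. By comparison, the $L^1$ estimate for $S_{\mathcal I}$ and the bookkeeping needed to pass from finite Haar expansions to all of $H^1_{\mathcal I}(\mathbb{T})$ --- including the verification that the identity $S_{\mathcal I}+T_{\mathcal I}=f\cdot b$ persists in the limit, via the distributional convergence estimates of Section \ref{prelim} --- are routine.
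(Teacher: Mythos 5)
Your decomposition is exactly the paper's: your $\Pi_b(f)$, $\Pi_f(b)$ and $\Lambda(f,b)$ are the paper's $\Pi_2$, $\Pi_1$ and $\Pi_3$, and your treatment of the $L^1$ piece $\Pi_b(f)$ is essentially the paper's Proposition \ref{P2} (modulo replacing $\|a\|_{L^\infty}$ by $M(a)$ in the square-function bound, since $H^1_{\mathcal I}$-atoms here are $L^2$-normalised, not $L^\infty$-normalised). The genuine gap is in the $H^{\log}$ estimate, which is the heart of the theorem. First, your key display involves $\|T_{\mathcal I}(a_k,b)\|_{L^\infty}$, which is infinite in general: $a_k$ is only $L^2$-normalised and $b$ is unbounded, so $T_{\mathcal I}(a_k,b)$ is not a multiple of an $H^{\log}_{\mathcal I}$-atom in the sense of Theorem \ref{at_dec_full} (those are $L^\infty$-normalised). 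You acknowledge this and propose a stopping-time subdivision, but that subdivision --- exactly the hard part --- is never constructed. Second, your ``decisive point'' (that $|\langle b\rangle_I|\lesssim\|b\|\log(2/|I|)$ is matched \emph{term by term} by the logarithm in the denominator of $\Psi_0$) is not sufficient on its own. In the regime $|\lambda_k|\sim|I_k|$ with $|\langle b\rangle_{I_k}|\sim\log(1/|I_k|)$, a single term contributes $|I_k|\,\Psi_0\big(\log(1/|I_k|)\big)\sim|\lambda_k|\log(1/|I_k|)/\log\log(1/|I_k|)$, not $\sim|\lambda_k|$, and summing such terms over $\sim 1/|I_k|$ atoms gives a divergent quantity while $\sum_k|\lambda_k|$ stays bounded. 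What excludes this scenario is that, by John--Nirenberg, intervals on which $\langle b\rangle_I$ is that large are exponentially rare --- a \emph{global} input across atoms that your term-by-term bookkeeping never invokes quantitatively.

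This is precisely where the paper takes a different, more robust route: it never produces an atomic decomposition of $T$ at all. In Proposition \ref{P1} it writes $\Pi_1(a_k,b)=\Pi_1(a_k,P_{\Omega_k}b)+\langle b\rangle_{\Omega_k}a_k$, absorbs the first piece into $H^1_{\mathcal I}(\mathbb{T})$ by an $L^2$ argument, and for $\beta_2=\sum_k\lambda_k\langle b\rangle_{\Omega_k}a_k$ uses the pointwise splitting $|\langle b\rangle_{\Omega_k}|\leq|b(\theta)-\langle b\rangle_{\Omega_k}|+|b(\theta)|$ to reduce matters to an $L^1$ bound plus the estimate $\big\| \,|b|\sum_k|\lambda_k|S_{\mathcal I}[a_k]\big\|_{L^{\log}(\mathbb{T})}\lesssim\|b\|_{BMO^+_{\mathcal I}(\mathbb{T})}\sum_k|\lambda_k|$, which is the Orlicz--H\"older inequality $L^1\times\exp L\to L^{\log}$ of Proposition \ref{rough_bound}, i.e.\ Lemma \ref{J-N}. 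That is the ingredient your sketch is missing, and any repair of your stopping-time plan would have to build it in. A further, smaller, point: you place the diagonal term $\Lambda(f,b)$ inside $T$, so your $H^{\log}$ bound must also cover it, whereas the paper puts it in the $L^1$ part where Cauchy--Schwarz disposes of it in one line (Proposition \ref{P3}); note in this connection that the paper's concluding sentence ``$S_{\mathcal I}:=\Pi_1$, $T_{\mathcal I}:=\Pi_2+\Pi_3$'' has the labels transposed --- consistent with Propositions \ref{P1}--\ref{P3}, the correct grouping is $T_{\mathcal I}=\Pi_1$ and $S_{\mathcal I}=\Pi_2+\Pi_3$.
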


The proof of Theorem \ref{1-d_periodic} that we present here is a variant of the corresponding one given by Bonami, Grellier, and Ky in \cite{BGK} (that establishes \cite[Theorem 1.1]{BGK}). To be more specific, let $f \in H^1_{\mathcal{I}} (\mathbb{T})$ be a function with finite wavelet expansion. If $b$ is a function in $\mathrm{BMO}^+_{\mathcal{I}} (\mathbb{T})$ that also has a finite wavelet expansion, then we may write
$$ f \cdot b = \Pi_1 (f,b) + \Pi_2 (f,b) + \Pi_3 (f,b), $$
where
$$ \Pi_1 (f,b) (\theta) := \sum_{\substack{I,J \in \mathcal{I} :\\ J \supsetneq I}} f_I g_J h_I (\theta) h_J (\theta) ,$$
$$ \Pi_2 (f,b) (\theta) := \sum_{\substack{I,J \in \mathcal{I} :\\ I \supsetneq J}} f_I g_J h_I (\theta) h_J (\theta) ,$$
and
$$ \Pi_3 (f,b) (\theta) :=  \sum_{\substack{I,J \in \mathcal{I} :\\  I = J }} f_I g_J  h_I (\theta) h_J (\theta). $$
We shall prove that:
\begin{itemize}
\item $ \Pi_1 $ can be extended as a bounded bilinear operator from $ H^1_{\mathcal{I}} (\mathbb{T}) \times BMO^+_{\mathcal{I}} (\mathbb{T}) $ to $ H^{\log}_{\mathcal{I}} (\mathbb{T})$, 
\item $ \Pi_2 $  can be extended as a bounded bilinear operator from $ H^1_{\mathcal{I}} (\mathbb{T}) \times BMO^+_{\mathcal{I}} (\mathbb{T}) $ to $ H^1_{\mathcal{I}} (\mathbb{T})$, and 
\item $ \Pi_3 $ can be extended as a bounded bilinear operator from $ H^1_{\mathcal{I}} (\mathbb{T}) \times BMO^+_{\mathcal{I}} (\mathbb{T}) $ to $ L^1 (\mathbb{T})$. 
\end{itemize}
One can thus conclude that Theorem \ref{1-d_periodic} holds by taking $S_{\mathcal{I}} := \Pi_1$ and $T_{\mathcal{I}}: = \Pi_2 + \Pi_3$. 


\begin{proposition}\label{P3}
The bilinear operator $\Pi_3$ extends into a bounded bilinear operator from $H^1_{\mathcal{I}} (\mathbb{T}) \times BMO^+_{\mathcal{I}} (\mathbb{T})$ to $L^1 (\mathbb{T})$.
\end{proposition}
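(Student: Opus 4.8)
The plan is to collapse the diagonal paraproduct to a single sum over $\mathcal{I}$ and then recognise the result as the dyadic $H^1$--$BMO$ pairing. The starting observation is that the Haar functions satisfy $h_I^2 = \abs{I}^{-1}\chi_I$, since $I_-$ and $I_+$ partition $I$. Writing $f_I = \langle f, h_I\rangle$ and $g_I = \langle b, h_I\rangle$, the operator therefore simplifies to
$$ \Pi_3(f,b)(\theta) = \sum_{I \in \mathcal{I}} f_I\, g_I\, \frac{\chi_I(\theta)}{\abs{I}}. $$
It suffices to establish the bound for $f,b$ with finite wavelet expansions and then extend by continuity, $F_{\mathcal{I}}(\mathbb{T})$ being dense in both factors. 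Taking absolute values inside the sum and integrating via Tonelli's theorem yields the clean passage from the pointwise estimate to $L^1$:
$$ \norm{\Pi_3(f,b)}_{L^1(\mathbb{T})} \leq \int_{\mathbb{T}} \sum_{I \ni \theta} \abs{f_I}\,\abs{g_I}\,\abs{I}^{-1}\, d\theta = \sum_{I \in \mathcal{I}} \abs{f_I}\,\abs{g_I}, $$
so the whole matter reduces to the bilinear inequality $\sum_{I}\abs{f_I}\abs{g_I} \lesssim \norm{f}_{H^1_{\mathcal{I}}(\mathbb{T})}\norm{b}_{BMO^+_{\mathcal{I}}(\mathbb{T})}$.

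This last inequality is precisely the dyadic $H^1$--$BMO$ duality pairing, taken with absolute values, and it is where the real content lies. I would first dispose of the absolute values: choosing unimodular scalars $\varepsilon_I$ with $\varepsilon_I f_I g_I = \abs{f_I}\abs{g_I}$ and setting $\tilde b := \sum_I \varepsilon_I g_I h_I$, one has $\sum_I \abs{f_I}\abs{g_I} = \sum_I f_I\,(\varepsilon_I g_I)$, while the square-function (Carleson) description of $BMO^+_{\mathcal{I}}$ is insensitive to the signs of the Haar coefficients, whence $\norm{\tilde b}_{BMO^+_{\mathcal{I}}} = \norm{b}_{BMO^+_{\mathcal{I}}}$. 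To bound $\sum_I f_I (\varepsilon_I g_I)$ I would invoke the atomic decomposition of $H^1_{\mathcal{I}}(\mathbb{T})$ (the $H^1$ analogue of Theorem \ref{at_dec_full}) and verify the estimate on a single $H^1$-atom $a$ supported in an arc $I_0$ with $\int a = 0$ and $\norm{a}_{L^\infty}\leq \abs{I_0}^{-1}$. For such an atom only the coefficients with $I \subseteq I_0$ survive, and two applications of the Cauchy--Schwarz inequality give
$$ \sum_{I \subseteq I_0} \abs{a_I}\,\abs{g_I} \leq \Big( \sum_{I \subseteq I_0} \abs{a_I}^2 \Big)^{1/2} \Big( \sum_{I \subseteq I_0} \abs{g_I}^2 \Big)^{1/2} \leq \norm{a}_{L^2}\, \big( C\, \norm{b}^2_{BMO^+_{\mathcal{I}}}\,\abs{I_0} \big)^{1/2}, $$
where the second factor uses the Carleson bound $\sum_{I \subseteq I_0}\abs{g_I}^2 \lesssim \norm{b}^2_{BMO^+_{\mathcal{I}}}\abs{I_0}$, which is the dyadic John--Nirenberg estimate underlying Lemma \ref{J-N}. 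Since $\norm{a}_{L^2}^2 = \sum_{I \subseteq I_0}\abs{a_I}^2 \leq \norm{a}_{L^\infty}^2\abs{I_0} \leq \abs{I_0}^{-1}$, the two factors multiply to a constant multiple of $\norm{b}_{BMO^+_{\mathcal{I}}}$, uniformly in the atom. Summing against the coefficients of the atomic decomposition of $f$ then gives $\sum_I\abs{f_I}\abs{g_I}\lesssim \norm{f}_{H^1_{\mathcal{I}}}\norm{b}_{BMO^+_{\mathcal{I}}}$.

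Combining the two displays yields $\norm{\Pi_3(f,b)}_{L^1}\lesssim \norm{f}_{H^1_{\mathcal{I}}}\norm{b}_{BMO^+_{\mathcal{I}}}$ on the dense class, and $\Pi_3$ then extends to a bounded bilinear map by continuity. The only genuinely substantial step is the pairing estimate: the collapse of the diagonal paraproduct to a single sum and the passage from the sum to $L^1$ are bookkeeping, whereas the interaction between the $H^1$ square function of $f$ and the Carleson measure generated by $b$ carries all the weight. The main technical point to get right is the localisation to subarcs of $I_0$ for an atom together with the correct normalisation in the Carleson estimate; this is also precisely the place where the normalisation in $\norm{\cdot}_{BMO^+_{\mathcal{I}}}$ must be tracked, so that the powers of $\abs{I_0}$ cancel to leave a bound independent of the atom.
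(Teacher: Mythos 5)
Your proof is correct and is essentially the paper's own argument: both collapse the diagonal to $\sum_{I} f_I g_I \chi_I/|I|$, reduce via the atomic decomposition of $H^1_{\mathcal{I}}(\mathbb{T})$ to a single atom $a$ supported in $I_0$, and conclude by Cauchy--Schwarz against the local estimate $\sum_{I\subseteq I_0}|g_I|^2=\|P_{I_0}b\|_{L^2(\mathbb{T})}^2\lesssim |I_0|\,\|b\|_{BMO^+_{\mathcal{I}}(\mathbb{T})}^2$, your coefficient-wise Cauchy--Schwarz after Tonelli being the same computation as the paper's pointwise bound $|\Pi_3(a,b)|\le S_{\mathcal{I}}[a]\cdot S_{\mathcal{I}}[P_{I_0}b]$ followed by Cauchy--Schwarz in $L^2$ (both yield $\|a\|_{L^2}\|P_{I_0}b\|_{L^2}$). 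The only cosmetic differences are your $L^\infty$-normalised atoms in place of the paper's $L^2$-normalised ones, and the sign-fixing step with unimodular $\varepsilon_I$, which is superfluous since your final estimate already carries absolute values; neither affects correctness.
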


\begin{proof}
It is well-known that $H^1_{\mathcal{I}} (\mathbb{T})$ admits a characterisation in terms of atoms. More specifically, recall that a measurable function $a $ is said to be an $H^1_{\mathcal{I}} (\mathbb{T})$-atom if it is either the constant function or there exists an $\Omega \in \mathcal{I}$ such that $\mathrm{supp} (a) \subseteq \Omega$, $\int_{\mathbb{T}} a_{\Omega} (\theta) d \theta = 0$, and $ \| a \|_{L^2 (\mathbb{T})} \leq |\Omega|^{-1/2} $. Then, $f \in H^1_{\mathcal{I}} (\mathbb{T})$ if, and only if, there exist a sequence $(\lambda_k)_k$ of non-negative scalars and a sequence $(a_k)_k$ of $H^1_{\mathcal{I}} (\mathbb{T})$-atoms such that
$$ f = \sum_{k \in \mathbb{N}} \lambda_k a_k $$
in the $H^1_{\mathcal{I}}$-norm and moreover, one has
$$ \| f \|_{H^1_{\mathcal{I}} (\mathbb{T})} \sim \inf \Bigg\{  \sum_{k \in \mathbb{N}}  | \lambda_k | : f = \sum_k \lambda_k a_k  \Bigg\}. $$

Hence, to show that $\Pi_3$ maps $H^1_{\mathcal{I}} (\mathbb{T}) \times BMO^+_{\mathcal{I}} (\mathbb{T})$ to $L^1 (\mathbb{T})$, it is enough to prove that there exists a constant $C>0$ such that
\begin{equation}\label{P3_atom}
\| \Pi_3 (a, b) \|_{L^1 (\mathbb{T})} \leq C \| b \|_{BMO^+_{\mathcal{I}} (\mathbb{T})}
\end{equation}
for all non-constant $H^1_{\mathcal{I}} (\mathbb{T})$-atoms $a$ and every $b \in BMO^+_{\mathcal{I}} (\mathbb{T})$ with $\int_{\mathbb{T}} b (\theta) d \theta = 0$ such that $a$ and $b$ have finite wavelet expansions. To this end, assume that $a = \sum_{I \subseteq \Omega} a_I h_I$ has finite wavelet expansion and is associated to some $\Omega \in \mathcal{I}$. Then,
$$  \Pi_3 (a, b) (\theta) = \sum_{\substack{I,J \in \mathcal{I} :\\  I = J }} a_I b_J  h_I (\theta) h_J (\theta) =  \sum_{I \subseteq \Omega} a_I b_I \frac{\chi_I (\theta)}{|I|}  $$
and hence, by using the Cauchy-Schwarz inequality, one gets the pointwise estimate
$$ | \Pi_3 (a, b) (\theta) | \leq S_{\mathcal{I}} [ a ] (\theta) \cdot S_{\mathcal{I}} [ P_{\Omega} b ] (\theta) ,$$
where $P_{\Omega} b (\theta) := \sum_{I \subseteq \Omega} b_I h_I (\theta) $.
We thus have by using the Cauchy-Schwarz inequality and the $L^2$-boundedness of $S_{\mathcal{I}}$,
\begin{equation}\label{L^2_Omega} 
\| \Pi_3 (a, b) \|_{L^1 (\mathbb{T})} \leq \| a\|_{L^2 (\mathbb{T})} \| P_{\Omega} b  \|_{L^2 (\mathbb{T})} . 
\end{equation}
Since $ \| a\|_{L^2 (\mathbb{T})} \leq |\Omega|^{-1/2}$ and $ \| P_{\Omega} b \|_{L^2 (\mathbb{T})} \leq |\Omega|^{1/2} \| b \|_{BMO^+_{\mathcal{I}} (\mathbb{T})}$,  \eqref{P3_atom} follows from \eqref{L^2_Omega}. \end{proof}


\begin{proposition}\label{P2}
The bilinear operator $\Pi_2$ extends into a bounded bilinear operator from $H^1_{\mathcal{I}} (\mathbb{T}) \times BMO^+_{\mathcal{I}} (\mathbb{T})$ to $H^1_{\mathcal{I}} (\mathbb{T})$.
\end{proposition}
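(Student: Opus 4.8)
The plan is to recognise $\Pi_2$ as a dyadic paraproduct and then run an atomic argument parallel to the one used for $\Pi_3$ in Proposition~\ref{P3}. The first step is to simplify the bilinear form. Whenever $J \subsetneq I$, the Haar function $h_I$ is constant on $J$, equal to some value $h_I(J) = \pm |I|^{-1/2}$, while $h_J$ is supported in $J$; hence $h_I(\theta) h_J(\theta) = h_I(J) h_J(\theta)$. Collecting terms by $J$ and using that for a function $f$ with $\widehat{f}(0)=0$ one has $\sum_{I \supsetneq J} f_I h_I(J) = \langle f \rangle_J$, I would rewrite
$$ \Pi_2(f,b) = \sum_{J \in \mathcal{I}} \langle f \rangle_J \, b_J \, h_J , \qquad b_J := \langle b, h_J \rangle. $$
Since the constant $H^1_{\mathcal{I}}(\mathbb{T})$-atom has all Haar coefficients equal to zero, $\Pi_2$ annihilates it, so it suffices to treat mean-zero $f$, for which this identity is valid. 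Throughout I work with $f$ and $b$ having finite wavelet expansions and extend the resulting bound by density at the end.

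Next, by the atomic characterisation of $H^1_{\mathcal{I}}(\mathbb{T})$ recalled in the proof of Proposition~\ref{P3} and the bilinearity of $\Pi_2$, it is enough to produce a constant $C>0$ with $\| \Pi_2(a,b)\|_{H^1_{\mathcal{I}}(\mathbb{T})} \le C \| b \|_{BMO^+_{\mathcal{I}}(\mathbb{T})}$ for every non-constant $H^1_{\mathcal{I}}(\mathbb{T})$-atom $a$ supported in some $\Omega \in \mathcal{I}$; summing against the coefficients of an atomic decomposition $f = \sum_k \lambda_k a_k$ then gives the full estimate, because $\sum_k |\lambda_k| \sim \| f\|_{H^1_{\mathcal{I}}(\mathbb{T})}$. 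For such an atom, $\langle a \rangle_J = 0$ unless $J \subsetneq \Omega$ (the cases $J \supseteq \Omega$ and $J$ disjoint from $\Omega$ both vanish by the cancellation and the support of $a$), so $\Pi_2(a,b) = \sum_{J \subsetneq \Omega} \langle a \rangle_J b_J h_J$ is supported in $\Omega$ and has mean zero.

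The heart of the matter is the $L^2$ bound. By orthonormality of the Haar system,
$$ \| \Pi_2(a,b) \|_{L^2(\mathbb{T})}^2 = \sum_{J \subsetneq \Omega} |\langle a \rangle_J|^2 \, |b_J|^2 . $$
Here I would invoke the dyadic Carleson embedding theorem: since $b \in BMO^+_{\mathcal{I}}(\mathbb{T})$, the sequence $\{ |b_J|^2 \}_{J \in \mathcal{I}}$ satisfies $\sum_{J \subseteq K} |b_J|^2 = \int_K |b - \langle b\rangle_K|^2 \le \| b\|_{BMO^+_{\mathcal{I}}(\mathbb{T})}^2 |K|$ for every $K \in \mathcal{I}$ (the same fact about $P_{\Omega} b$ that was used in Proposition~\ref{P3}), that is, it is a Carleson sequence with constant $\| b\|_{BMO^+_{\mathcal{I}}(\mathbb{T})}^2$. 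The Carleson embedding theorem then yields $\sum_J |\langle a\rangle_J|^2 |b_J|^2 \lesssim \| b\|_{BMO^+_{\mathcal{I}}(\mathbb{T})}^2 \| a\|_{L^2(\mathbb{T})}^2 \le \| b\|_{BMO^+_{\mathcal{I}}(\mathbb{T})}^2 |\Omega|^{-1}$. Hence $\Pi_2(a,b)$ is, up to the factor $C \| b\|_{BMO^+_{\mathcal{I}}(\mathbb{T})}$, an $H^1_{\mathcal{I}}(\mathbb{T})$-atom supported in $\Omega$; since every such atom has $H^1_{\mathcal{I}}$-norm $\lesssim 1$ (by Cauchy--Schwarz applied to its square function), the desired atomic bound follows.

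I expect the main obstacle to be conceptual rather than computational: recognising that $\Pi_2$ is exactly the paraproduct $\sum_J \langle f\rangle_J b_J h_J$, and that the $BMO$ hypothesis enters precisely through the Carleson-sequence property of $\{|b_J|^2\}$, so that the estimate collapses to a single application of Carleson embedding. The remaining points --- the vanishing of $\langle a\rangle_J$ off $\Omega$, the convergence of the atomic series, and the density extension from finite wavelet expansions --- are routine and mirror the arguments already carried out for $\Pi_3$ and in Section~\ref{prelim}.
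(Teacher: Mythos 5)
Your argument is correct, and its first two steps coincide with the paper's: the reduction to a single non-constant atom $a$ supported on some $\Omega \in \mathcal{I}$ (with $b$ normalised to have mean zero and both factors having finite Haar expansions), and the identification of $\Pi_2$ with the paraproduct $\Pi_2(a,b)=\sum_{J\in\mathcal{I}} \langle a \rangle_J\, b_J\, h_J$; the paper obtains exactly this identity via the formula $\langle a \rangle_J = \sum_{J \subsetneq I \subseteq \Omega} a_I h_I(c_J)$. The difference lies in the key estimate. The paper never leaves the square function: from $|\langle a \rangle_J| \le \inf_{\theta \in J} M(a)(\theta)$ it gets the pointwise domination $S_{\mathcal{I}}[\Pi_2(a,b)] \le M(a)\cdot S_{\mathcal{I}}[P_\Omega b]$, where $M$ is the Hardy--Littlewood maximal function, and then Cauchy--Schwarz plus the $L^2$-boundedness of $M$ and of $S_{\mathcal{I}}$ give $\|\Pi_2(a,b)\|_{H^1_{\mathcal{I}}(\mathbb{T})} \lesssim \|a\|_{L^2(\mathbb{T})}\,\|P_\Omega b\|_{L^2(\mathbb{T})} \lesssim \|b\|_{BMO^+_{\mathcal{I}}(\mathbb{T})}$. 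You instead compute $\|\Pi_2(a,b)\|_{L^2(\mathbb{T})}$ exactly by Haar orthonormality, bound $\sum_{J}|\langle a\rangle_J|^2|b_J|^2$ by the dyadic Carleson embedding theorem applied to the Carleson sequence $\{|b_J|^2\}$ with constant $\|b\|_{BMO^+_{\mathcal{I}}(\mathbb{T})}^2$, and note that $\Pi_2(a,b)$ is supported in $\Omega$ with mean zero, hence is a multiple of an $H^1_{\mathcal{I}}$-atom with factor $\lesssim \|b\|_{BMO^+_{\mathcal{I}}(\mathbb{T})}$. The two routes are close relatives---Carleson embedding is itself proved by a dyadic maximal function argument, so the same $L^2$ maximal bound powers both---but yours yields a structurally sharper conclusion (atoms are mapped to multiples of atoms, so the image stays localised in $\Omega$), at the cost of invoking Carleson embedding as an external tool and of verifying the support and cancellation of the output; the paper's version is more self-contained, requiring only the trivial pointwise domination by $M(a)$, and never needs to track where $\Pi_2(a,b)$ is supported. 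Both arguments share the same (routine, and in the paper equally implicit) density step in passing from finite expansions and atomic sums to general $f \in H^1_{\mathcal{I}}(\mathbb{T})$.
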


\begin{proof} As in the proof of the previous proposition, it suffices to prove that there exists an absolute constant $C>0$ such that 
\begin{equation}\label{P2_atom}
\| \Pi_2 (a, b) \|_{H^1_{\mathcal{I}} (\mathbb{T})} \leq C \| b \|_{BMO^+_{\mathcal{I}} (\mathbb{T})}
\end{equation}
for each non-constant $H^1_{\mathcal{I}} (\mathbb{T})$-atom $a$ and for each $b \in BMO^+_{\mathcal{I}} (\mathbb{T})$ with $\int_{\mathbb{T}} b (\theta) d \theta  = 0$ such that $a$ and $b$ have finite wavelet expansions. Towards this aim, write $a = \sum_{I \subseteq \Omega} a_I h_I$ for some $\Omega \in {\mathcal{I}}$ and notice that
\begin{align*}
  \Pi_2 (a,b) (\theta) =  \sum_{\substack{I,J \in \mathcal{I} :\\  \Omega \supseteq I \supsetneq J}} a_I b_J h_I (\theta) h_J (\theta) & =  \sum_{\substack{I,J \in \mathcal{I} :\\  \Omega \supseteq I \supsetneq J}} a_I (P_{\Omega} b)_J h_I (\theta) h_J (\theta) \\
  & = \sum_{\substack{I,J \in \mathcal{I} :\\  \Omega \supseteq I \supsetneq J}} a_I (P_{\Omega} b)_J h_I (c_J) h_J (\theta) ,
\end{align*}
where $c_J$ denotes the centre of $J$. Moreover, observe that since
$$ \langle a \rangle_J = |J|^{-1} \int_J a =  \sum_{ \substack{ I \in \mathcal{I} :\\  I \subseteq \Omega }} a_I  |J|^{-1}  \int_J h_I (\theta) d \theta  =  \sum_{ \substack{ I \in \mathcal{I} :\\  J \subsetneq I \subseteq \Omega }} a_I h_I (c_J) , $$
one may rewrite $\Pi_2 (a,b) $ as
$$  \Pi_2 (a,b) (\theta) =  \sum_{J \in \mathcal{I}}  \langle a \rangle_J ( P_{\Omega} b )_J h_J (\theta). $$
Hence,
$$ S_{\mathcal{I}} [  \Pi_2 (a,b) ] (\theta) = \Big(  \sum_{J \in \mathcal{I}} | \langle a \rangle_J |^2 | ( P_{\Omega} b)_J  |^2 \frac{ \chi_J (\theta)}{|J|} \Big)^{1/2} \leq M (a) (\theta) \cdot S_{\mathcal{I}}  [ P_{\Omega} b  ] (\theta), $$
where $M$ denotes the Hardy-Littlewood maximal operator acting on functions defined over $\mathbb{T}$. Hence, by using the Cauchy-Schwarz inequality and the $L^2$-boundedness of $M$ and $S_{\mathcal{I}}$, one gets
$$ \| \Pi_2 (a, b) \|_{H^1_{\mathcal{I}} (\mathbb{T})}  = \|  S_{\mathcal{I}} \big( \Pi_2 (a,b) \big) \|_{L^1 (\mathbb{T})} \lesssim \| a \|_{L^2 (\mathbb{T})} \| P_{\Omega} b \|_{L^2 (\mathbb{T})} .$$
As in the proof of the previous lemma, note that one has $ \| a\|_{L^2 (\mathbb{T})} \leq |\Omega|^{-1/2}$ and $ \| P_{\Omega} b \|_{L^2 (\mathbb{T})} \leq |\Omega|^{1/2} \| b \|_{BMO^+_{\mathcal{I}} (\mathbb{T})}$ and so, \eqref{P2_atom} follows from the last estimate.
\end{proof}


It follows from Propositions \ref{P3} and \ref{P2} that if we define 
$$ T(f,b) (\theta) : =  \Pi_2 (f,b) ( \theta ) + \Pi_3 (f,b) ( \theta ) ,$$
 then $T$ is a bilinear operator that maps $H^1_{\mathcal{I}} (\mathbb{T}) \times BMO^+_{\mathcal{I}} (\mathbb{T})$ to $L^1 (\mathbb{T})$. Therefore, to complete the proof of Theorem \ref{1-d_periodic}, it remains to handle $\Pi_3$.


\begin{proposition}\label{P1}
The bilinear operator $\Pi_1$ extends into a bounded bilinear operator from $H^1_{\mathcal{I}} (\mathbb{T}) \times BMO^+_{\mathcal{I}} (\mathbb{T})$ to $H^{\log}_{\mathcal{I}} (\mathbb{T})$.
\end{proposition}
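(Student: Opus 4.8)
The plan is to avoid reducing to atoms as was done for Propositions \ref{P3} and \ref{P2}: since $H^{\log}_{\mathcal{I}}(\mathbb{T})$ is only a quasi-Banach space and its defining functional is nonlinear, summing atomic estimates is awkward here. Instead I would estimate the dyadic square function of $\Pi_1(f,b)$ directly and then invoke a generalized Hölder inequality of the type $L^1 \cdot \exp L \hookrightarrow L^{\log}$. As in the previous two propositions, it suffices to prove the a priori bound $\|\Pi_1(f,b)\|_{H^{\log}_{\mathcal{I}}(\mathbb{T})} \lesssim \|f\|_{H^1_{\mathcal{I}}(\mathbb{T})}\|b\|_{BMO^+_{\mathcal{I}}(\mathbb{T})}$ for $f$ with finite wavelet expansion and $b \in BMO^+_{\mathcal{I}}(\mathbb{T})$, the extension to general $f$ then following by density of $F_{\mathcal{I}}(\mathbb{T})$ in $H^1_{\mathcal{I}}(\mathbb{T})$. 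Since $\Pi_1$ involves $b$ only through its Haar coefficients $b_J$, I may assume without loss of generality that $\int_{\mathbb{T}} b = 0$, as subtracting the mean changes neither $\Pi_1(f,b)$ nor increases $\|b\|_{BMO^+_{\mathcal{I}}(\mathbb{T})}$.

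The first key step is to identify $\Pi_1(f,b)$ explicitly. For $I \subsetneq J$ the Haar function $h_J$ is constant on $I$ with value $h_J(c_J)$ evaluated on $I$ (I write $h_J(c_I)$, with $c_I$ the centre of $I$), so summing the Haar expansion of the mean-zero $b$ over $J \supsetneq I$ gives $\langle b \rangle_I = \sum_{J \supsetneq I} b_J h_J(c_I)$. Consequently $\Pi_1(f,b) = \sum_{I \in \mathcal{I}} f_I \langle b \rangle_I h_I$, a cancellative Haar series with coefficients $f_I \langle b \rangle_I$. Its dyadic square function is therefore $S_{\mathcal{I}}[\Pi_1(f,b)](\theta) = \big( \sum_I |f_I|^2 |\langle b \rangle_I|^2 |I|^{-1}\chi_I(\theta) \big)^{1/2}$, and since $|\langle b \rangle_I| \leq M_{\mathcal{I}}[b](\theta)$ whenever $\theta \in I$, I obtain the pointwise bound $S_{\mathcal{I}}[\Pi_1(f,b)](\theta) \leq M_{\mathcal{I}}[b](\theta)\, S_{\mathcal{I}}[f](\theta)$, where $M_{\mathcal{I}}$ is the dyadic maximal operator. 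This is the analogue of the pointwise factorisations used for $\Pi_2$ and $\Pi_3$, with the roles of the two factors interchanged.

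The remaining step is to pass from this pointwise product to the $H^{\log}_{\mathcal{I}}(\mathbb{T})$-quasinorm. Since $\Psi_0$ is increasing, $\|\Pi_1(f,b)\|_{H^{\log}_{\mathcal{I}}(\mathbb{T})} \leq \|M_{\mathcal{I}}[b]\cdot S_{\mathcal{I}}[f]\|_{L^{\log}(\mathbb{T})}$, and I would bound the right-hand side by the generalized Hölder inequality $\|g \cdot h\|_{L^{\log}(\mathbb{T})} \lesssim \|g\|_{L^1(\mathbb{T})}\|h\|_{\exp L(\mathbb{T})}$ (this is the mechanism behind Proposition \ref{rough_bound}; see \cite[Lemma 2.1]{BGK}), applied with $g = S_{\mathcal{I}}[f]$ and $h = M_{\mathcal{I}}[b]$. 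Here $\|S_{\mathcal{I}}[f]\|_{L^1(\mathbb{T})} = \|f\|_{H^1_{\mathcal{I}}(\mathbb{T})}$ by the definition of $H^1_{\mathcal{I}}(\mathbb{T})$, so it remains only to control $\|M_{\mathcal{I}}[b]\|_{\exp L(\mathbb{T})}$ by $\|b\|_{BMO^+_{\mathcal{I}}(\mathbb{T})}$; combining these ingredients then yields the claimed estimate.

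The main obstacle I anticipate is precisely this last control of the maximal function in the exponential class, namely showing $\|M_{\mathcal{I}}[b]\|_{\exp L(\mathbb{T})} \lesssim \|b\|_{\exp L(\mathbb{T})}$, which together with Lemma \ref{J-N} gives $\|M_{\mathcal{I}}[b]\|_{\exp L(\mathbb{T})} \lesssim \|b\|_{BMO^+_{\mathcal{I}}(\mathbb{T})}$. I expect to deduce the boundedness of $M_{\mathcal{I}}$ on $\exp L(\mathbb{T})$ from Doob's inequality $\|M_{\mathcal{I}}\|_{L^p(\mathbb{T})\to L^p(\mathbb{T})} = p'$, whose norm stays bounded as $p \to \infty$, together with the description of $\exp L(\mathbb{T})$ through the growth of the $L^p$-norms. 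A minor point to verify is that the reduction to $f \in F_{\mathcal{I}}(\mathbb{T})$ is harmless: the a priori inequality extends $\Pi_1(\cdot, b)$ continuously to all of $H^1_{\mathcal{I}}(\mathbb{T})$ for fixed $b$, exactly as in the extensions already carried out for $\Pi_2$ and $\Pi_3$.
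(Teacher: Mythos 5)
Your proposal is correct, and it takes a genuinely different route from the paper's proof. The paper keeps the atomic structure of $H^1_{\mathcal{I}}(\mathbb{T})$ at the centre: writing $f=\sum_k\lambda_k a_k$, it splits $\Pi_1(a_k,b)=\Pi_1(a_k,P_{\Omega_k}b)+\langle b\rangle_{\Omega_k}a_k$, shows that the localized pieces lie in $H^1_{\mathcal{I}}(\mathbb{T})$ (hence in $H^{\log}_{\mathcal{I}}(\mathbb{T})$, since $\Psi_0(t)\le t$) by the same maximal-function/square-function and Cauchy--Schwarz argument used for $\Pi_2$, and is then left with the only genuinely $L^{\log}$ term, $\sum_k\lambda_k\langle b\rangle_{\Omega_k}a_k$, which it bounds using Proposition \ref{rough_bound} (after splitting off an $L^1$ part), i.e.\ by the generalized H\"older inequality applied to $b$ itself together with Lemma \ref{J-N}. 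You bypass atoms altogether: from the identity $\Pi_1(f,b)=\sum_I f_I\langle b\rangle_I h_I$ (which the paper also derives) you get the global pointwise bound $S_{\mathcal{I}}[\Pi_1(f,b)]\le M_{\mathcal{I}}[b]\,S_{\mathcal{I}}[f]$, and then apply the same $L^1\times\exp L\to L^{\log}$ H\"older inequality of \cite{BGK}, but with the factor $M_{\mathcal{I}}[b]$ in place of $b$. The price is the one ingredient the paper never needs, namely $\|M_{\mathcal{I}}[b]\|_{\exp L(\mathbb{T})}\lesssim\|b\|_{\exp L(\mathbb{T})}$; your argument for it is sound (Doob's $L^p$ bound $p/(p-1)$, uniformly bounded for $p\ge 2$, combined with the characterization $\|h\|_{\exp L(\mathbb{T})}\sim\sup_{p\ge 1}p^{-1}\|h\|_{L^p(\mathbb{T})}$, the range $1\le p\le 2$ being trivial on a finite measure space), so the ``main obstacle'' you flag does close. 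As for what each approach buys: yours is shorter, yields the multiplicative estimate $\|\Pi_1(f,b)\|_{H^{\log}_{\mathcal{I}}(\mathbb{T})}\lesssim\|f\|_{H^1_{\mathcal{I}}(\mathbb{T})}\|b\|_{BMO^+_{\mathcal{I}}(\mathbb{T})}$ in a single chain, and makes transparent that the logarithmic loss comes solely from $M_{\mathcal{I}}[b]$ being exponentially integrable but unbounded; the paper's atomic route is longer but exhibits finer structure---$\Pi_1(f,b)$ is an $H^1_{\mathcal{I}}$ function plus a combination of atoms weighted by the means $\langle b\rangle_{\Omega_k}$---and stays parallel to the wavelet argument of \cite{BGK}, which is part of the point of that section. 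Two details worth writing out in a final version: the expansion $\langle b\rangle_I=\sum_{J\supsetneq I}b_J h_J(c_I)$ for mean-zero $b$ is justified by $BMO_{\mathcal{I}}(\mathbb{T})\subset L^2(\mathbb{T})$ and the $L^2$-convergence of the Haar series, and the density extension from $F_{\mathcal{I}}(\mathbb{T})$ to all of $H^1_{\mathcal{I}}(\mathbb{T})$ relies on the completeness of $H^{\log}_{\mathcal{I}}(\mathbb{T})$ noted in Section \ref{remarks}.
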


\begin{proof} Fix an $f \in H^1_{\mathcal{I}} (\mathbb{T}) $ and $b \in BMO^+_{\mathcal{I}} (\mathbb{T})$ with finite wavelet expansions and moreover, assume that $\int_{\mathbb{T}} b (\theta) d \theta = 0$.

First of all, arguing as above, one may write
$$ \Pi_1 (f,b) (\theta) =  \sum_{ I \in \mathcal{I} } f_I \langle b \rangle_I h_I (\theta) . $$
Let $a$ be a non-constant $H^1_{\mathcal{I}} (\mathbb{T})$-atom with finite wavelet expansion that is supported in some $\Omega \in \mathcal{I}$ so that $\| a \|_{L^2 (\mathbb{T})} \leq |\Omega|^{-1/2}$. We claim that
\begin{equation}\label{dec}
\Pi_1 (a,b) (\theta) = \Pi_1 (a, P_{\Omega} b) (\theta) +  \langle b \rangle_{\Omega} \cdot a (\theta).
\end{equation}
Indeed, to see this, write
$$ \Pi_1 (a,b) (\theta) =  \sum_{ I \in \mathcal{I} } a_I \langle b \rangle_I  h_I (\theta) =\Pi_1 (a, P_{\Omega} b) (\theta)  + \sum_{ I \in \mathcal{I} } a_I \langle b - P_{\Omega}b \rangle_I h_I (\theta) 
$$
and observe that
\begin{align*}
\sum_{ I \in \mathcal{I} } a_I \langle b - P_{\Omega}b \rangle_I h_I (\theta) &=  \sum_{ I \in \mathcal{I} } a_I \Big( |I|^{-1} \int_I \sum_{\substack{J \in \mathcal{I}:\\ J \supsetneq \Omega }} b_J h_J (\theta') d \theta'  \Big) h_I (\theta)  \\
&=  \sum_{ I \in \mathcal{I} } a_I \Big(   \sum_{\substack{I \in \mathcal{D}:\\ J \supsetneq \Omega }} b_J h_J (c_{\Omega}) \Big) h_I (\theta) \\
&= \Big( \sum_{\substack{J \in \mathcal{I}:\\ J \supsetneq \Omega }} b_J h_J (c_{\Omega}) \Big) a (\theta)  \\
&= \langle b \rangle_{\Omega} \cdot a (\theta),
\end{align*}
where $c_{\Omega}$ denotes the centre of $\Omega$. Hence, the proof of \eqref{dec} is complete. 

We may assume that $f = \sum_{k=1}^N \lambda_k a_k$, where each atom $a_k$ has a finite wavelet expansion. By using \eqref{dec}, one may write
$$ \Pi_1 (f, b) = \beta_1 + \beta_2 ,$$
where 
$$ \beta_1 (\theta) := \sum_{k=1}^N \lambda_k  \Pi_1 (a_k, P_{\Omega_k} b) (\theta) $$
and
$$ \beta_2 ( \theta ) :=  \sum_{k=1}^N \lambda_k \langle b \rangle_{\Omega_k} a_k ( \theta)  . $$
For the first term we have
\begin{align*}
\| \Pi_1 (a_k, P_{\Omega_k} b) \|_{H^1_{\mathcal{I}} (\mathbb{T})} &=  \| S_{\mathcal{I}} [ \Pi_1 (a_k, P_{\Omega_k} b)  ]  \|_{L^1 (\mathbb{T})} \\
&= \Big\| \Big( \sum_{\substack{I \in \mathcal{I} \\ I \subseteq \Omega_k} } | m_I (P_{\Omega}) |^2 |(a_k)_I|^2 \frac{\chi_I }{ | I | } \Big)^{1/2} \Big\|_{L^1 (\mathbb{T})}  \\
& \leq \| M (P_{\Omega_k} b) S_{\mathcal{I}} [ a_k ]  \|_{L^1 (\mathbb{T})} \\
& \leq \| M (P_{\Omega_k} b) \|_{L^2 (\mathbb{T})} \| S_{\mathcal{I}} [ a_k ] \|_{L^2 (\mathbb{T})} \\
& \leq C \| P_{\Omega_k} b \|_{L^2 (\mathbb{T})} \| a_k \|_{L^2 (\mathbb{T})} \\
& \leq C |\Omega_k|^{1/2} \|   b \|_{BMO^+_{\mathcal{I}} (\mathbb{T})} |\Omega_k|^{-1/2} = C \|   b \|_{BMO^+_{\mathcal{I}} (\mathbb{T})}
\end{align*}
for all $k=1,\cdots, N$. Hence,
$$  \| \beta_1 \|_{H^1_{\mathcal{I}} (\mathbb{T})} \leq \sum_{k=1}^N |\lambda_k| \| \Pi_1 (a_k, P_{\Omega_k} b) \|_{H^1_{\mathcal{I}} (\mathbb{T})} \leq C \| b \|_{BMO^+_{\mathcal{I}} (\mathbb{T})} \sum_{k=1}^N |\lambda_k| $$
and so, one deduces that
$$  \| \beta_1 \|_{H^1_{\mathcal{I}} (\mathbb{T})} \lesssim \| f \|_{H^1_{\mathcal{I}} (\mathbb{T})}  \| b \|_{BMO^+_{\mathcal{I}} (\mathbb{T})} .$$
It remains to treat $\beta_2$. The goal is to prove that
\begin{equation}\label{beta_1}
\| S_{\mathcal{I}} [ \beta_2 ]  \|_{L^{\log} (\mathbb{T})} \lesssim \| b \|_{BMO^+_{\mathcal{I}} (\mathbb{T})} \sum_{k=1}^N | \lambda_k | ,
\end{equation}
where the implied constant is independent of $b$, $f$ (and $N$). To this end, observe that 
\begin{align*}
S_{\mathcal{I}} [ \beta_2 ] ( \theta )  & \leq \sum_{k=1}^N |\lambda_k| | \langle b \rangle_{\Omega_k} | S_{\mathcal{I}} [ a_k ] (\theta) \\
& \leq \sum_{k=1}^N |\lambda_k| |b(\theta) -\langle b \rangle_{\Omega_k} | S_{\mathcal{I}} [ a_k ] (\theta) + |b(\theta)| \sum_{k=1}^N |\lambda_k|  S_{\mathcal{I}} [ a_k ] (\theta) \\
&= \sum_{k=1}^N |\lambda_k| |P_{\Omega_k} b (s)| S_{\mathcal{I}} [ a_k ]  (\theta) + |b(\theta)| \sum_{k=1}^N |\lambda_k|  S_{\mathcal{I}} [ a_k ] (\theta) .
\end{align*}
Hence,
\begin{align*}
 \| S_{\mathcal{I}} [ \beta_2 ]  \|_{L^{\log} (\mathbb{T})} & \lesssim \Big\| \sum_{k=1}^N |\lambda_k| |P_{\Omega_k} b  | S_{\mathcal{I}} [ a_k ]  \Big\|_{L^{\log} (\mathbb{T})} +  \Big\|  |b| \sum_{k=1}^N |\lambda_k|  S_{\mathcal{I}} [ a_k ]  \Big\|_{L^{\log} (\mathbb{T})}   \\
 & \lesssim \Big\| \sum_{k=1}^N |\lambda_k| |P_{\Omega_k} b  | S_{\mathcal{I}} [ a_k ]  \Big\|_{L^1 (\mathbb{T})} +  \Big\|  |b| \sum_{k=1}^N |\lambda_k|  S_{\mathcal{I}} [ a_k ]  \Big\|_{L^{\log} (\mathbb{T})} .
\end{align*}
By arguing as above, it can easily be seen that
$$ \Big\| \sum_{k=1}^N |\lambda_k| |P_{\Omega_k} b | S_{\mathcal{I}} [ a_k ]  \Big\|_{L^1 (\mathbb{T})}  \lesssim  \| b \|_{BMO^+_{\mathcal{I}} (\mathbb{T})} \sum_{k=1}^N |\lambda_k|. $$
Therefore, the proof of \eqref{beta_1} is reduced to showing that
\begin{equation}\label{bmo_rough}
 \Big\|  |b| \sum_{k=1}^N |\lambda_k|  S_{\mathcal{I}} [ a_k ]  \Big\|_{L^{\log} (\mathbb{T})}  \lesssim \| b \|_{BMO^+_{\mathcal{I}} (\mathbb{T})} \sum_{k=1}^N |\lambda_k| .
\end{equation}
To this end, note that by using Proposition \ref{rough_bound} one gets
$$ \Big\|  |b| \sum_{k=1}^N |\lambda_k|  S_{\mathcal{I}} [ a_k ]  \Big\|_{L^{\log} (\mathbb{T})}  \lesssim \| b \|_{BMO^+_{\mathcal{I}} (\mathbb{T})} \Big\| \sum_{k=1}^N |\lambda_k|  S_{\mathcal{I}} [ a_k ]  \Big\|_{L^1 (\mathbb{T})}  $$
and since for each $H^1_{\mathcal{I}}$-atom one has $\| S_{\mathcal{I}} [ a_k] \|_{L^1 (\mathbb{T})} \leq 1$, \eqref{bmo_rough} follows from the last estimate.
This completes the proof of \eqref{beta_1}.
\end{proof}

\subsection{Passing from dyadic to non-dyadic decompositions}\label{translate} Assume that $f \in H^1 (\mathbb{T})$ and $b \in BMO^+ (\mathbb{T})$. Then, as shown in \cite{Mei} one has 
\begin{equation}\label{BMO_translates}
 BMO (\mathbb{T}) = BMO_{\mathcal{I}^0} (\mathbb{T}) \cap  BMO_{\mathcal{I}^{1/3}} (\mathbb{T}) 
\end{equation}
and there exist  $f_1 \in H^1_{\mathcal{I}^0} (\mathbb{R})$ and $f_2 \in H^1_{\mathcal{I}^{1/3}} (\mathbb{R})$ such that $f = f_1 + f_2$.  
Having fixed such a decomposition of $f$, write
$$ f \cdot b = f_1 \cdot b + f_2 \cdot g = S_{\mathcal{I}^0} (f_1, b) + T_{\mathcal{I}^0} (f_1, b) + S_{\mathcal{I}^{1/3}} (f_2, b) + T_{\mathcal{I}^{1/3}} (f_2, b) \quad \text{in } \mathcal{D}'. $$
Hence, a periodic version of Theorem \ref{B-G-K_Thm} is obtained by taking  
$$ S(f, b) : = S_{\mathcal{I}^0} (f_1, b) + S_{\mathcal{I}^{1/3}} (f_2, b)  $$
and
$$ T(f, b) : = T_{\mathcal{I}^0} (f_1, b) + T_{\mathcal{I}^{1/3}} (f_2, b)  . $$


\section{Some further remarks in the periodic setting} \label{extensions}

\subsection{Variants of Theorems \ref{Stein-type_lemma} and \ref{LlogL} on $\mathbb{T}$}\label{periodic_2-3}
There is a periodic version of Theorem \ref{Stein-type_lemma}, namely $M(f) \in L_{\Psi_0} (\mathbb{T})$ if, and only if, $f \in L \log \log L (\mathbb{T})$. Combining this with Lemma \ref{decreasing}, one obtains the following result.

\begin{proposition}\label{inclusion_LloglogL} If 
$f \in  L \log \log L (\mathbb{T}) $, then $ f \in  H^{\log} (\mathbb{T}) $.
\end{proposition}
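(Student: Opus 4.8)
The plan is to follow the scheme of the proof of Theorem~\ref{LlogL}, taking advantage of the fact that the compactness of $\mathbb{T}$ eliminates the cancellation/tail estimate that formed the technical heart of the $\mathbb{R}^d$ argument. Since $\mathbb{T}$ has finite measure, the hypothesis $f \in L\log\log L(\mathbb{T})$ already forces $f \in L^1(\mathbb{T})$; in particular its harmonic (Poisson) extension to the disc is well defined, and so is the non-tangential maximal function $f^{\ast}$ entering the definition of $H^{\log}(\mathbb{T})$.

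First I would record the classical pointwise domination of $f^{\ast}$ by the Hardy--Littlewood maximal function: there is an absolute constant $C > 0$ such that $f^{\ast}(\theta) \leq C\, M(f)(\theta)$ for almost every $\theta \in \mathbb{T}$, which follows from the fact that the Poisson kernel is controlled by a decreasing radial approximate identity. Next I would invoke the periodic version of Theorem~\ref{Stein-type_lemma} recalled above: because $f \in L\log\log L(\mathbb{T})$, we have $M(f) \in L_{\Psi_0}(\mathbb{T})$, that is, $\int_{\mathbb{T}} \Psi_0(M(f)(\theta))\, d\theta < \infty$.

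It then remains to transfer this integrability from $M(f)$ to $f^{\ast}$ inside $\Psi_0$. Since $\Psi_0$ is increasing, $\Psi_0(f^{\ast}) \leq \Psi_0(C\,M(f))$, and the doubling property of $\Psi_0$ --- supplied by Lemma~\ref{decreasing} together with \eqref{eq:doubling}, which yield $\Psi_0(Cs) \lesssim_C \Psi_0(s)$ --- allows me to absorb the constant $C$. Consequently
$$ \int_{\mathbb{T}} \Psi_0(f^{\ast}(\theta))\, d\theta \lesssim \int_{\mathbb{T}} \Psi_0(M(f)(\theta))\, d\theta < \infty, $$
which is exactly the assertion $f \in H^{\log}(\mathbb{T})$.

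The argument is essentially routine once these ingredients are in place, and it is worth stressing that, in contrast with Theorem~\ref{LlogL}, no mean-zero hypothesis is needed here: on the compact $\mathbb{T}$ constants already belong to $H^{\log}(\mathbb{T})$ and there is no behaviour at infinity to control. The only step demanding genuine care is the first one, namely the domination $f^{\ast} \lesssim M(f)$; one should also keep in mind that on the bounded domain $\mathbb{T}$ the weight $\log(e + |\theta|)$ is bounded above and below, so that $\Psi(\theta, t)$ is comparable to $\Psi_0(t)$ uniformly in $\theta$, which is what justifies using $\Psi_0$ in place of the full weight $\Psi$ in the definition of $H^{\log}(\mathbb{T})$.
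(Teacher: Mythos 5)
Your proof is correct and takes essentially the same route as the paper, which obtains this proposition in one line by combining the periodic version of Theorem~\ref{Stein-type_lemma} with Lemma~\ref{decreasing} (i.e., the pointwise domination $f^{\ast} \lesssim M(f)$ together with the monotonicity and doubling of $\Psi_0$). You have simply written out explicitly the steps the paper leaves implicit, including the correct observation that no mean-zero condition is needed on the compact torus.
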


Moreover, arguing as in the Section \ref{zygmundproof} and using the necessity in (an appropriate periodic version of) Theorem \ref{Stein-type_lemma} as well as Proposition \ref{inclusion_LloglogL} and Lemma \ref{decreasing}, one can show that if $f $ is a non-negative function in $H^{\log} (\mathbb{T})$, then $f \in L \log \log L (\mathbb{T})$. 

\begin{proposition}\label{identification} 
One has $$ \{ f \in L \log \log L (\mathbb{T}) : f\geq 0 \  \mathrm{a.e.}\ \mathrm{on}\ \mathbb{T}\} = \{ f\in  H^{\log} (\mathbb{T}) : f \geq 0\  \mathrm{a.e.}\ \mathrm{on}\ \mathbb{T} \} .$$
\end{proposition}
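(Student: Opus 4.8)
The plan is to prove the two set inclusions separately. The inclusion from left to right is essentially immediate from what has already been established, while the reverse inclusion is a partial converse of Zygmund type, proved in the spirit of Stein's argument and of Subsection \ref{periodic}. Throughout, the crucial feature is that positivity is used only for the harder, reverse, direction.

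For the inclusion $\subseteq$, suppose $f \geq 0$ a.e. on $\mathbb{T}$ and $f \in L\log\log L(\mathbb{T})$. Then Proposition \ref{inclusion_LloglogL} gives at once that $f \in H^{\log}(\mathbb{T})$, and $f$ of course remains non-negative, so $f$ lies in the right-hand set. No cancellation or positivity is needed here, and this direction is purely a restatement of Proposition \ref{inclusion_LloglogL} on the subclass of non-negative functions.

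For the reverse inclusion $\supseteq$, fix a non-negative $f \in H^{\log}(\mathbb{T})$, so that $\int_{\mathbb{T}} \Psi_0(f^{\ast}(\theta))\, d\theta < \infty$, where $f^{\ast}$ denotes the non-tangential maximal function. The key point, and the only place where positivity enters, is the pointwise lower bound $f^{\ast}(\theta) \gtrsim M(f)(\theta)$ for a.e.\ $\theta \in \mathbb{T}$, with $M$ the Hardy-Littlewood maximal operator on $\mathbb{T}$; this is the periodic analogue of the estimate $M_{\phi}(f) \gtrsim M(f \cdot \eta_K)$ used in Subsection \ref{periodic}, but now no spatial cutoff $\eta_K$ is needed since $\mathbb{T}$ is compact. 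To obtain it, I would write the Poisson integral $u(re^{i\theta}) = \int_{\mathbb{T}} P_r(\theta - t) f(t)\, dt$, note that $u \geq 0$ because $f \geq 0$, and use that for $|\theta - t| \lesssim 1-r$ one has $P_r(\theta - t) \gtrsim (1-r)^{-1}$. Discarding the remaining portion of the integral, which is legitimate precisely because $f \geq 0$, yields $u(re^{i\theta}) \gtrsim |I|^{-1}\int_I f$ for the arc $I$ centred at $\theta$ of length comparable to $1-r$; taking the supremum over $r \in (0,1)$ and using $f^{\ast}(\theta) \geq \sup_{0<r<1} u(re^{i\theta})$ gives the claimed lower bound.

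Granting $M(f) \leq C f^{\ast}$ pointwise, I would use that $\Psi_0$ is increasing and doubling, as in \eqref{eq:doubling}, to conclude that $\int_{\mathbb{T}} \Psi_0(M(f)(\theta))\, d\theta \lesssim \int_{\mathbb{T}} \Psi_0(f^{\ast}(\theta))\, d\theta < \infty$, so that $M(f) \in L_{\Psi_0}(\mathbb{T})$. Finally, I would invoke the necessity direction of the periodic version of Theorem \ref{Stein-type_lemma} recorded at the start of Subsection \ref{periodic_2-3}, namely that $M(f) \in L_{\Psi_0}(\mathbb{T})$ forces $f \in L\log\log L(\mathbb{T})$. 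This places $f$ in the left-hand set and completes the reverse inclusion, hence the asserted equality. The main obstacle is the rigorous justification of $f^{\ast} \gtrsim M(f)$: one must bound the Poisson kernel uniformly from below on the relevant arcs and ensure that the a priori distribution $f \in H^{\log}(\mathbb{T}) \subset H^p(\mathbb{T})$, $p<1$, may genuinely be treated as a non-negative function, so that the Poisson integral is well defined and the discarded tail is indeed non-negative. Once this lower bound is in hand, the doubling estimate and the appeal to the periodic Stein-type theorem are routine.
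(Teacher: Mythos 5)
Your proposal is correct and follows essentially the same route as the paper's proof: the inclusion $\subseteq$ via Proposition \ref{inclusion_LloglogL}, and the reverse inclusion by using positivity to get the pointwise bound $f^{\ast} \gtrsim M(f)$ through the Poisson kernel, so that $M(f) \in L_{\Psi_0}(\mathbb{T})$, and then invoking the necessity direction of the periodic Stein-type theorem (which the paper re-derives in place from Stein's weak-type lower bound \eqref{wt_reverse_periodic}, whereas you cite it directly). The only difference is one of presentation: you spell out the Poisson-kernel lower bound that the paper asserts ``as in the Euclidean case,'' while the paper spells out the Stein necessity argument that you quote.
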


\begin{proof} Note that Proposition \ref{inclusion_LloglogL} implies that
\begin{equation}\label{incl_1}
\{ f \in L \log \log L (\mathbb{T}) : f\geq 0 \  \mathrm{a.e.}\ \mathrm{on}\ \mathbb{T}\} \subseteq \{ f \in  H^{\log} (\mathbb{T})   : f \geq 0\  \mathrm{a.e.}\ \mathrm{on}\ \mathbb{T} \} .
\end{equation}

To prove the reverse inclusion, take a non-negative function $f$ in $ H^{\log} (\mathbb{T})$ and notice that it follows from the work of Stein \cite{Stein_LlogL} that
\begin{equation}\label{wt_reverse_periodic} 
 |\{ \theta \in \mathbb{T} : M (f) ( \theta ) > c_1\alpha \} | \geq\frac{c_2}{\alpha} \int_{\{ |f| > \alpha \}} |f( \theta )| d \theta, 
\end{equation}
where $c_1,c_2 > 0$ are absolute constants. Hence, by arguing as in the proof of Theorem \ref{Stein-type_lemma}, it follows from \eqref{wt_reverse_periodic} (noting that the periodic case is easier as one does not need to consider the contribution away from the support of $f$) that
\begin{equation}\label{reverse_periodic}
 \int_{\mathbb{T}} {\Psi}_0 (M (f)) (\theta) d \theta \gtrsim 1 + \int_{\mathbb{T}} |f(x)| \log^+ \log^+ |f(\theta)| d \theta . 
\end{equation}
Since $f \geq 0$ a.e. on $\mathbb{T}$, as in the Euclidean case, one has 
\begin{equation}\label{pw_per}
f^{\ast} (\theta) \geq  \sup_{0< r < 1} | ( P_r \ast f  ) (\theta) | \gtrsim M (f) (\theta) \quad \mathrm{for}\  \mathrm{a.e.}\ \theta \in \mathbb{T},
\end{equation}
where $P_r$ denotes the Poisson kernel in the periodic setting. Hence, by using \eqref{reverse_periodic}, \eqref{pw_per}, and Lemma \ref{decreasing}, we deduce that $f \in L \log \log L (\mathbb{T})$ and so,
\begin{equation}\label{incl_2}
\{ f \in  H^{\log} (\mathbb{T})  : f \geq 0\  \mathrm{a.e.}\ \mathrm{on}\ \mathbb{T} \} \subseteq
\{ f \in L \log \log L (\mathbb{T}) : f\geq 0 \  \mathrm{a.e.}\ \mathrm{on}\ \mathbb{T}\}.
\end{equation}
The desired fact is a consequence of \eqref{incl_1} and \eqref{incl_2}.
\end{proof}

\subsubsection{{Some further applications}}\label{applications_periodic}

We conclude with some applications of Theorem \ref{generalstein} in the periodic setting.
The function 
$$ \Psi(x,t)=\Psi  (t)=t \log^+t\,\log^+\log^+t $$
appearing in \cite{Sjolin} satisfies the hypotheses of Theorem \ref{generalstein}, and we now determine which space maps into $L_{\Psi}$ via the maximal function. With the associated $\psi$ defined as before, an integration by parts yields
$$ \int \frac{\psi(s)}{s}ds=\frac{1}{2}(\log^+s)^2\log^+\log^+s+\log^+s\log^+\log^+s-\frac{1}{4}(\log^+s)^2. $$
This allows us to conclude that, for this choice of $\Psi$, 
$$ M(f)\in L_{\Psi}(\mathbb{T})\quad \textrm {if, and only if,}\quad  f\in L\log^2L\log\log L(\mathbb{T}). $$
Turning to the space $L\log\log L \log\log\log\log L$ appearing in Lie's paper \cite{Lie}, we can check where the maximal operator maps this space. Performing the appropriate computations, we obtain that
$$ \int_{\mathbb{T}}\frac{M(f)  }{\log(M(f)  +e)}\log^+\log^+\log^+\log^+M(f)  d \theta<\infty $$
if, and only if,
$$ f\in L\log \log L\log \log \log \log L (\T). $$

Roughly speaking, the contents of Theorem \ref{generalstein} and the computations presented above can be summarised as follows. Let $\Phi_0$ be a given Orlicz function, namely $\Phi_0 : [0 , \infty) \rightarrow [0, \infty) $ is an increasing function with $\Phi_0 (0) = 0$ and $\Phi_0 (t) \rightarrow \infty$ as $t \rightarrow \infty$. Suppose that one can find non-negative, increasing functions $M,S$ with
$$ \Phi_0 (t) = M (t) \cdot S(t) \quad (t>0)$$
and such that, for $0 < \alpha < t$, one can easily compute
$$ F_{\alpha} (t) := \int_{\alpha}^t \frac{M'(s)}{s} d s $$
in closed form and, moreover, that there exists an $\alpha_0 >0$ with the property that for every $\alpha \geq \alpha_0$ one has
$$ F_{\alpha} (t) \cdot S(t) \gtrsim  \int_{\alpha}^t \Big( \frac{M (s)}{s} + F (s) \Big) \cdot S' (s) d s  \quad \mathrm{for}\ \mathrm{all} \ t \geq \alpha . $$
Then, by arguing as in Section \ref{steinproof}, one deduces the `concrete' relation
$$ f \in L_{\Phi_0} (\mathbb{T}) \quad \mathrm{if}, \ \mathrm{and}\ \mathrm{only}\ \mathrm{if}, \quad M(f) \in L_{F_{\alpha} \cdot S} (\mathbb{T}),$$
for any $\alpha \geq \alpha_0$.


\subsection{A variant of an inequality of Hardy and Littlewood for $H^{\log} (\mathbb{T})$}\label{FC_H^{log}}

 A classical result due to Hardy and Littlewood asserts that for every $p \in (0,1]$ there exists a constant $C_p >0$ such that  
\begin{equation}\label{p < 1_disc}
\Bigg( \sum_{n = 1 }^{\infty} \frac{| f_n |^p}{ |n|^{2-p}} \Bigg)^{1/p} \leq  C_p \norm{ F }_{H^p (\mathbb{D})}
\end{equation}
for all analytic functions $F (z) = \sum_{n=0}^{\infty} f_n z^n$ in the Hardy space $H^p (\mathbb{D})$ on the unit disc $\mathbb{D}$; \cite[Theorem 16]{H-L}. It follows from  \eqref{p < 1_disc} that for every $p \in (0,1]$ there exists a constant $B_p >0$
\begin{equation}\label{p < 1}
\Bigg( \sum_{n\in\mathbb{Z} \setminus \{0\}} \frac{ | \widehat{f} (n)|^p}{ |n|^{2-p}} \Bigg)^{1/p} \leq B_p \norm{ f }_{H^p (\mathbb{T})} .
\end{equation}

Since $H^{\log}(\mathbb{T}) \subset H^p (\mathbb{T})$, $p \in (0,1)$ (see Remark \ref{p-LlogL}), one deduces from \eqref{p < 1} that $\{ |n|^{p-2} \widehat{f} (n)|^p \}_{n \in \mathbb{Z} \setminus \{0\}}$ is summable for any $f \in H^{\log}(\mathbb{T})$ for all $p \in (0,1)$.  The next theorem establishes a more accurate description of the behaviour of the Fourier coefficients of distributions in $H^{\log} (\mathbb{T})$.

\begin{theo}\label{H-L_H^{log}}
There exists a constant $C>0$ such that
$$ \sum_{n \in \mathbb{Z} \setminus \{ 0 \} } \frac{ \Psi_0 (|n \widehat{f} (n)| )}{ n ^2 } \leq C \int_{\mathbb{T}} \Psi_0 ( f^{\ast} (\theta) ) d \theta,  $$
where $\Psi_0 (t) : = t \cdot [\log (e+t)]^{-1}$, $t \geq 0$. 
\end{theo}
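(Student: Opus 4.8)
The plan is to reduce the claimed modular inequality to a single-atom estimate and then sum, exploiting the subadditivity \eqref{sublinear} of $\Psi_0$ together with the atomic decomposition of $H^{\log}(\mathbb{T})$. Given $f\in H^{\log}(\mathbb{T})$, I would first invoke \eqref{equivalence_atoms}: the periodic analogue of the construction behind Proposition \ref{at_dec} produces a decomposition $f-\widehat f(0)=\sum_k b_{I_k}$ in $\mathcal D'$, where each $b_{I_k}$ is a scalar multiple of an $H^{\log}(\mathbb{T})$-atom supported in an arc $I_k$, with the modular control
$$ \sum_k |I_k|\,\Psi_0\big(\|b_{I_k}\|_{L^\infty(\mathbb{T})}\big)\lesssim \int_{\mathbb{T}}\Psi_0\big(f^{\ast}(\theta)\big)\,d\theta ; $$
this last bound uses that $\Psi_0$ is doubling and that the maximal functions defining $H^{\log}(\mathbb{T})$ have comparable $L_{\Psi_0}$-modulars. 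Since the series converges in $\mathcal D'$ and $e_n\in C^\infty(\mathbb{T})$, for every $n\neq 0$ one has $\widehat f(n)=\sum_k\widehat{b_{I_k}}(n)$ (the constant $\widehat f(0)$ drops out), so monotonicity and \eqref{sublinear} yield $\Psi_0(|n\widehat f(n)|)\leq\sum_k\Psi_0(|n\widehat{b_{I_k}}(n)|)$. By Tonelli's theorem it then suffices to prove the per-atom estimate
$$ \sum_{n\neq 0}\frac{\Psi_0(|n\widehat b(n)|)}{n^2}\lesssim |I|\,\Psi_0\big(\|b\|_{L^\infty(\mathbb{T})}\big) $$
for every $b\in L^\infty(\mathbb{T})$ with $\mathrm{supp}(b)\subseteq I$ and $\int_I b=0$, with a constant independent of $b$ and $I$.

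For this estimate I would set $A:=\|b\|_{L^\infty(\mathbb{T})}$ and $c_n:=|\widehat b(n)|$, and record three elementary facts to be used: the trivial bound $c_n\lesssim A|I|$; the Plancherel bound $\sum_n c_n^2\lesssim A^2|I|$; and the cancellation bound $c_n\lesssim A|I|^2|n|$, which follows from $\int_I b=0$ by subtracting the value of the exponential at the centre of $I$ and applying the mean value theorem. The sum is then split at the frequency $|n|\sim 1/|I|$.

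In the low-frequency range $0<|n|\leq 1/|I|$ the cancellation bound gives $|n\widehat b(n)|\lesssim A(|n||I|)^2\leq A$; here the crude bound $\Psi_0(t)\leq t$ is too lossy, so I would instead use the quasi-homogeneity \eqref{eq:doubling} to dominate $\Psi_0(A(|n||I|)^2)$ by $(|n||I|)^2\big(1+\log(1/(|n||I|))\big)\Psi_0(A)$ and then sum, the logarithmic factor remaining summable against $\sum n^{-2}$, which produces $\lesssim |I|\Psi_0(A)$. In the high-frequency range $|n|>1/|I|$ I would split further according to whether $|n|c_n\leq A$ or $|n|c_n>A$: on the first set $\Psi_0(|n|c_n)\leq\Psi_0(A)$ together with $\sum_{|n|>1/|I|}n^{-2}\lesssim|I|$ gives the bound immediately, while on the second set $\log(e+|n|c_n)>\log(e+A)$ activates the denominator, so that $\Psi_0(|n|c_n)/n^2<c_n/(|n|\log(e+A))$, and Cauchy--Schwarz with the Plancherel bound and $\sum_{|n|>1/|I|}n^{-2}\lesssim|I|$ again gives $\lesssim|I|\Psi_0(A)$. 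The main obstacle throughout is retaining the logarithmic gain: a direct appeal to \eqref{p < 1} (equivalently, using $\Psi_0(t)\le t$ to reduce to the $H^1$ Hardy--Littlewood inequality) only reproduces the already-known summability and discards the crucial factor $1/\log(e+\|b\|_\infty)$. Recovering it forces precisely the two-sided care described above, namely the doubling estimate \eqref{eq:doubling} in the low-frequency regime and the size-splitting of frequencies anchored by the $\ell^2$ (Plancherel) control of the atom's Fourier coefficients in the high-frequency regime, and this per-atom analysis is where essentially all the work lies.
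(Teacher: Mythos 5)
Your proposal is correct, and its skeleton is the same as the paper's: reduce via the atomic decomposition \eqref{equivalence_atoms} and the subadditivity \eqref{sublinear} to a per-atom bound for mean-zero $L^\infty$ functions on an arc $I$ (this is exactly \eqref{atom_ineq}), then split the frequency sum at $|n| \sim 1/|I|$, using the cancellation bound $|\widehat b (n)| \lesssim \|b\|_{L^\infty(\mathbb{T})} |n| |I|^2$ (the paper's \eqref{a_I-bound}) in the low regime and $L^2$ information in the high regime. Where you genuinely diverge is in how the two regimes are closed, and both of your replacements work. For low frequencies the paper invokes the power-type sub-homogeneity $\Psi_0(st) \leq A_0 s^{2/3}\Psi_0(t)$, $s \in (0,1)$, from \cite[Example 1.1.5 (i)]{YLK}, so the sum becomes $|I|^{4/3}\Psi_0(A)\sum_{n \leq 1/|I|} n^{-2/3} \lesssim |I|\Psi_0(A)$; you instead use the quasi-homogeneity \eqref{eq:doubling}, which costs a factor $1+\log(1/\sigma)$ rather than gaining a power. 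Your sum still closes, but note that your phrase ``summable against $\sum n^{-2}$'' is not literally what happens: the $n^{-2}$ is entirely consumed by the factor $(|n||I|)^2$, leaving $\Psi_0(A)\, |I|^2 \sum_{n \leq 1/|I|} \bigl(1 + 2\log(1/(n|I|))\bigr)$, and one needs the Stirling-type estimate $\sum_{n \leq N} \log(N/n) = O(N)$ to conclude that this is $O(|I|\Psi_0(A))$ --- worth writing out explicitly. For high frequencies the paper uses H\"older with exponents $4$ and $4/3$, Parseval, and the trivial bound $|\widehat{a_I}(n)| \leq |I| \|a_I\|_{L^\infty(\mathbb{T})}$, passing through the auxiliary function $\widetilde{\Psi}_0(t) = t^{2/3}[\log(e+t)]^{-4/3}$; your size splitting according to $|n||\widehat b(n)| \leq A$ or $> A$ --- monotonicity of $\Psi_0$ in the first case, the activated denominator $\log(e+|n||\widehat b(n)|) > \log(e+A)$ followed by Cauchy--Schwarz and Plancherel in the second --- is more elementary and makes the source of the logarithmic gain completely explicit, at the mild price of a case distinction. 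Both arguments rest on the same black box, namely that the periodic atomic decomposition can be taken with the modular control $\sum_k |I_k| \Psi_0(\|b_{I_k}\|_{L^\infty(\mathbb{T})}) \lesssim \int_{\mathbb{T}} \Psi_0(f^{\ast}(\theta))\, d\theta$, which is what Ky's construction (or the dyadic analogue in Proposition \ref{at_dec}) actually produces.
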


\begin{proof} We shall prove that there exists an absolute constant $C_0> 0 $ such that
\begin{equation}\label{atom_ineq}
\sum_{ n \in \mathbb{Z} \setminus \{ 0 \} } \frac{ \Psi_0 (|n \widehat{a_I} (n)| )}{ n ^2 } \leq C_0 |I| \Psi_0 ( \| a_I \|_{L^{\infty} (\mathbb{T})} )
\end{equation}
for any $L^{\infty}$-function $a_I$ supported in some arc $I$ in $\mathbb{T}$ with $\int_I a_I (\theta) d \theta = 0$.

To this end, we fix such a function $a_I$ (and an arc $I$) and write
$$ \sum_{ n \in \mathbb{Z} \setminus \{ 0 \} } \frac{ \Psi_0 (|n \widehat{a_I} (n)| )}{ n ^2 } =  A + B , $$
where
$$ A : = \sum_{1 \leq |n| \leq |I|^{-1} } \frac{ \Psi_0 (|n \widehat{a_I} (n)| )}{ n ^2 } $$
and
$$ B: = \sum_{ |n| > |I|^{-1} } \frac{ \Psi_0 (|n \widehat{a_I} (n)| )}{ n ^2 } . $$
We shall prove that
\begin{equation}\label{A_bound}
A \lesssim |I|  \Psi_0 \big( \| a_I \|_{L^{\infty}(\mathbb{T})} \big)
\end{equation}
and
\begin{equation}\label{B_bound}
B \lesssim |I|  \Psi_0 \big( \| a_I \|_{L^{\infty}(\mathbb{T})} \big).
\end{equation}
To prove \eqref{A_bound}, by using the cancellation of $a_I$ and the fact that $ | e^{- i n x } - e^{-i n y } | \leq |n| | x-y | \leq |n| |I|$ for all $n \in \mathbb{Z}$ and $x,y \in I$, one has
\begin{equation}\label{a_I-bound}
|\widehat{a_I}(n)| \leq  |n| |I|^2 \|a_I \|_{L^{\infty} (\mathbb{T})} \quad \text{for all } n \in \mathbb{Z}. 
\end{equation}
Since $\Psi_0$ is increasing and there exists an absolute constant $A_0 > 0$ such that $\Psi_0 (st) \leq A_0 s^{2/3} \Psi_0 (t)$ for all $t>0$ and $s \in (0,1)$; see \cite[Example 1.1.5 (i)]{YLK}, it follows from \eqref{a_I-bound} that
\begin{align*}
A  \leq \sum_{1 \leq |n| \leq |I|^{-1}} \frac{ \Psi_0 \big(n^2 |I|^2 \| a_I \|_{L^{\infty}(\mathbb{T})} \big) }{n^2}
&\lesssim  \sum_{1 \leq n \leq |I|^{-1}} (|I|^2n^2)^{2/3} \frac{ \Psi_0 \big( \| a_I \|_{L^{\infty}(\mathbb{T})} \big) }{n^2} \\
& = |I|^{4/3} \Psi_0 \big( \| a_I \|_{L^{\infty}(\mathbb{T})} \big) \sum_{1 \leq n \leq |I|^{-1}} n^{-2/3} \\
& \lesssim |I| \Psi_0 \big( \| a_I \|_{L^{\infty}(\mathbb{T})} \big) .
\end{align*}
Hence, \eqref{A_bound} holds. To establish \eqref{B_bound}, note that by using H\"older's inequality for $p=4$ and $p'=4/3$ and Parseval's identity, one obtains
$$ B \leq B_1 \cdot B_2, $$
where 
$$ B_1 : = \| a_I \|_{L^2 (\mathbb{T})}^{1/2} \leq |I|^{1/4} \| a_I \|_{L^{\infty} (\mathbb{T})}^{1/2} $$
and
$$ B_2 := \Bigg( \sum_{ |n| > |I|^{-1} } \frac{\widetilde{\Psi}_0 \big( |n \widehat{a_I} (n)| \big) } {n^2} \Bigg)^{3/4} $$
with $\widetilde{\Psi}_0 (t) : = t^{2/3} [\log(e+t)]^{-4/3}$, $t \geq 0$. Since $ |\widehat{a_I} (n)| \leq |I| \| a_I \|_{L^{\infty} (\mathbb{T})} $ and $\widetilde{\Psi}_0$ is increasing on $[0, \infty)$, we have
\begin{align*}
B_2 & \leq \Bigg( \sum_{ |n| > |I|^{-1} } \frac{\widetilde{\Psi}_0 \big( |n| |I| \| a_I \|_{L^{\infty} (\mathbb{T})} \big) } {n^2} \Bigg)^{3/4} \\
    & \leq |I|^{1/2} \frac{ \| a_I \|_{L^{\infty} (\mathbb{T})}^{1/2} } { \log \big( e + \| a_I \|_{L^{\infty} (\mathbb{T})} \big) }  \Bigg( \sum_{ |n| > |I|^{-1} }  n^{-4/3} \Bigg)^{3/4} \\
    & \lesssim |I|^{3/4} \frac{ \| a_I \|_{L^{\infty} (\mathbb{T})}^{1/2} } { \log \big( e + \| a_I \|_{L^{\infty} (\mathbb{T})} \big) }
\end{align*}
and so, \eqref{B_bound} holds as
$$ B \leq B_1 \cdot B_2 \lesssim |I|  \Psi_0 \big( \| a_I \|_{L^{\infty}(\mathbb{T})} \big) . $$
Therefore, in view of \eqref{A_bound} and \eqref{B_bound}, \eqref{atom_ineq} holds. 

To complete the proof of the theorem, take an $f \in H^{\log } (\mathbb{T})$ and note that there exists a sequence $\{ b_{I_k} \}_{k \in \mathbb{N}}$ of multiples of atoms in $H^{\log} (\mathbb{T})$, supported in arcs $I_k$, such that
$$  f - \widehat{f}(0) = \sum_{k \in \mathbb{N}} b_{I_k} \quad \text{in } \mathcal{D}'$$
and
$$ \sum_{k \in \mathbb{N}} |I_k | \Psi_0 \big( \| b_{I_k} \|_{L^{\infty} (\mathbb{T})} \big) \leq A \int_{\mathbb{T}} \Psi_0 ( f^{\ast} (\theta) ) d \theta , $$
where $A>0$ is an absolute constant. Hence, by using \eqref{sublinear} and \eqref{atom_ineq} we get 
\begin{align*}
\sum_{n \in \mathbb{Z} \setminus \{ 0 \} } \frac{ \Psi_0 ( |n \widehat{f} (n)| )}{ n ^2 } & \leq
\sum_{k \in \mathbb{N}} \sum_{n \in \mathbb{Z} \setminus \{ 0 \} } \frac{ \Psi_0 ( |n \widehat{b_{I_k}} (n) | )}{ n ^2 } \\
& \lesssim \sum_{k \in \mathbb{N}} |I_k | \Psi_0 \big( \| b_{I_k} \|_{L^{\infty} (\mathbb{T})} \big) \\
& \lesssim \int_{\mathbb{T}} \Psi_0 ( f^{\ast} (\theta) ) d \theta 
\end{align*}
and this completes the proof of our theorem.
\end{proof}

\begin{rmk}
One deduces from Theorem \ref{H-L_H^{log}} that for any $f\in H^{\log} (\mathbb{T})$,
\begin{equation}\label{convergence}
    \sum_{n\in \mathbb{Z}\setminus \{0\}} \frac{|\widehat{f}(n)|}{|n|\log(e+|n|)}<\infty. 
\end{equation}
Indeed, observe that 
\begin{equation}\label{dual}
| \widehat{f}(n)| \lesssim_f 1 + |n| \quad \text{ for all } n \in \mathbb{Z}.
\end{equation}
To see this, note that $( H^p (\T))^{\ast} \cong \Lambda_{ p^{-1}-1} (\T)$ for $p <1$; see \S 7.4 in \cite{Duren} and so, for $f \in H^{\log} (\T) \subset H^{2/3} (\T)$ one has
\begin{align*}
\abs{ \langle f , e_n \rangle } \lesssim_f \norm{ e_n }_{ \Lambda_{1/2 } (\T)}  = \norm{e_n}_{L^{\infty}(\T)} + \sup_{ \substack{  x,y \in [0, 2\pi): \\ x\neq y}}   \frac{ \abs{ e^{i  n x} - e^{i  n y}}} {\abs{x-y}^{1/2}} 
& \lesssim 1 + \abs{n}^{1/2} \\
& \le 1 + \abs{n}
\end{align*}
for all $n \in \mathbb{Z}$, where $e_n (x) : = e^{i n x}$, $x \in \mathbb{T}$.  
Therefore, in view of Theorem \ref{H-L_H^{log}} and \eqref{dual}, \eqref{convergence} holds. 
\end{rmk}
Theorem \ref{H-L_H^{log}} can be used to exhibit distributions in $ H^p (\mathbb{T}) \setminus H^{\log} (\mathbb{T})$ for $p\in (0,1)$. For instance, it follows from Theorem \ref{H-L_H^{log}} that the Dirac distribution $\delta_0$ does not belong to $ H^{\log} (\mathbb{T})$.

Furthermore, Theorem \ref{H-L_H^{log}} is sharp in the following sense: if $\widetilde{\Psi} : [ 0 , \infty) \rightarrow [0, \infty)$ is any increasing function with $ \lim_{t \rightarrow \infty} \widetilde{\Psi} (t) / \Psi_0 (t) = \infty$, then there is no constant $C>0$ such that
\begin{equation}\label{false}
\sum_{n \in \mathbb{Z} \setminus \{ 0 \} } \frac{ \widetilde{\Psi} (|n \widehat{f} (n)| )}{ n ^2 } \leq C \int_{\mathbb{T}} \Psi_0 ( f^{\ast} (\theta) ) d \theta
\end{equation}
for all $f \in H^{\log} (\mathbb{T})$. Indeed, take a function $\widetilde{\Psi}$ as above and suppose that \eqref{false} holds true. Let $N$ be a large positive integer that will eventually be sent to infinity. Consider the function 
$$
 a_N (\theta) : = N 2^N  e^{i 2^N \theta}  \chi_{[0, 2\pi 2^{-N})} (\theta), \quad \theta \in [ 0 , 2\pi). 
$$
One can easily check that 
\begin{equation}\label{a_N_atom}
\| a_N \|_{H^{\log} (\mathbb{T})} \lesssim 1,
\end{equation}
where the implied constant is independent of $N$. 

Consider the interval $I_N : = [2^{N-2}, 2^{N-1})$ and observe that there exists an absolute constant $c_0 >0$ such that for every natural number $n$ in $ I_N $ one has
$$
|\widehat{a_N} (n) |  = N 2^{N} \frac{ \Big| e^{-i 2\pi (2^{-N}n - 1 )} -1 \Big|}{2\pi |n-2^N|} 
 =  N  2^N   \frac{\Big|\sin \big[ \pi \big(n 2^{-N} -1 \big) \big]\Big| }{2\pi ( 2^N - n)}  \geq c_0 N,
$$
where we used the identity $|e^{is} - e^{it} | = |\sin [(s-t)/2]|$ for $s=-2\pi (2^{-N} n -1)$ and $t=0$ as well as the fact that for $n \in I_N$ one has $2^N- n \sim 2^N$ and $| \sin [ \pi (2^{-N} n -1 ) ] | \sim 1$. 

Hence, \eqref{false} and \eqref{a_N_atom} imply that
\begin{align*}
1 \gtrsim \sum_{n \in \mathbb{Z} \setminus \{ 0 \} } \frac{ \widetilde{\Psi} (|n \widehat{a_N} (n)| )}{ n ^2 }  \ge  \sum_{n = 2^{N-2} }^{2^{N-1}} \frac{ \widetilde{\Psi} (|n \widehat{a_N} (n)| )}{ n ^2 } 
& \ge \widetilde{\Psi} (c_0 2^{N-2} N ) \sum_{n = 2^{N-2} }^{2^{N-1}} n^{-2} \\
& \sim \frac{ \widetilde{\Psi} (c_0 2^{N-2} N ) } {2^N} \\
& = \frac{ \Psi_0 (c_0 2^{N-2} N) }{2^N} \cdot \frac{ \widetilde{\Psi} (c_0 2^{N-2} N )}{\Psi_0 (c_0 2^{N-2} N)} \\
& \sim  \frac{ \widetilde{\Psi} (c_0 2^{N-2} N )}{\Psi_0 (c_0 2^{N-2} N)},
\end{align*} 
which yields a contradiction by taking $N \in \mathbb{N}$ `large enough'.

\begin{rmk} 
As a consequence of sharpness of Theorem \ref{H-L_H^{log}} discussed above, one deduces that
$$ \sum_{n \in \mathbb{Z} \setminus \{ 0 \} } \frac{ |\widehat{f} (n) |}{ |n| \big[ \log \big( e + |n \widehat{f}(n) | \big) \big]^s} \lesssim \int_{\mathbb{T}} \Psi_0 ( f^{\ast} (\theta) ) d \theta $$
is false when $0<s<1$.
\end{rmk}

It follows from Theorem \ref{H-L_H^{log}} and \cite[(8.3)]{BIJZ} that there exists a constant $C>0$ such that
\begin{equation}\label{H^log_disc}
  \sum_{n = 1 }^{\infty} \frac{ \Psi_0 \big( n | f_n | \big)}{ n^2 }  \leq  C \sup_{0<r<1} \int_0^{2\pi} \Psi_0 \big( |F(r e^{i\theta})| \big) d \theta
\end{equation}
for all analytic functions $F (z) = \sum_{n=0}^{\infty} f_n z^n$ in the unit disc $\mathbb{D}$ for which the quantity on the right-hand side of \eqref{H^log_disc} is finite.

We remark that variants of \eqref{p < 1_disc} and \eqref{p < 1} for certain classes of Hardy-Orlicz spaces have been obtained in \cite{P} and \cite{WW} (see also \cite{Jain, R, VK}), which do not include the case of $H^{\log}(\mathbb{T})$ treated above. Moreover, our methods are completely different from those in the aforementioned references.


\subsection*{Acknowledgments}
AS extends his thanks to Kelly Bickel and the rest of the Department of Mathematics at Bucknell University (Lewisburg, PA) for hospitality during a visit where part of this work was carried out.


 

\end{document}